\numberwithin{equation}{section}
\newtheorem{Theorem}{Theorem}[section]
\newtheorem{Proposition}[Theorem]{Proposition}
\newtheorem{Corollary}[Theorem]{Corollary}
\newtheorem{Lemma}[Theorem]{Lemma}
\theoremstyle{definition}
\newtheorem{Definition}[Theorem]{Definition}
\newtheorem{Remark}[Theorem]{Remark}
\newtheorem{Remarks}[Theorem]{Remarks}
\begin{document}
\allowdisplaybreaks

\newcommand{\arXivNumber}{2201.10931}

\renewcommand{\PaperNumber}{050}

\FirstPageHeading

\ShortArticleName{Spherical Representations of $C^*$-Flows II}

\ArticleName{Spherical Representations of $\boldsymbol{C^*}$-Flows II:\\ Representation System and Quantum Group Setup}

\Author{Yoshimichi UEDA}

\AuthorNameForHeading{Y.~Ueda}

\Address{Graduate School of Mathematics, Nagoya University,\\ Furocho, Chikusaku, Nagoya, 464-8602, Japan}
\Email{\href{mailto:ueda@math.nagoya-u.ac.jp}{ueda@math.nagoya-u.ac.jp}}
\URLaddress{\url{http://www.math.nagoya-u.ac.jp/~ueda/index.html}}

\ArticleDates{Received February 07, 2022, in final form June 26, 2022; Published online July 05, 2022}

\Abstract{This paper is a sequel to our previous study of spherical representations in the operator algebra setup. We first introduce possible analogs of dimension groups in the present context by utilizing the notion of operator systems and their relatives. We then apply our study to inductive limits of compact quantum groups, and establish an analogue of Olshanski's notion of spherical unitary representations of infinite-dimensional Gelfand pairs of the form $G < G\times G$ (via the diagonal embedding) in the quantum group setup. This, in particular, justifies Ryosuke Sato's approach to asymptotic representation theory for quantum groups.}

\Keywords{spherical representation; KMS state; ordered $*$-vector space; operator system; inductive limit; quantum group; $\sigma$-$C^*$-algebra}

\Classification{22D25; 22E66; 46L67; 17B37}
\vspace{2mm}

\section{Introduction}
\vspace{2mm}

In \cite{Ueda:Preprint20} we introduced the notion of $(\alpha^t,\beta)$-spherical representations for $C^*$-flows $\alpha^t$ and developed its general theory including Vershik--Kerov's ergodic method and the spectral decomposition of such a representation for a certain class of $C^*$-flows. The present paper is a sequel to that paper, and still attempts to provide some general results. In fact, we will introduce a natural algebraic structure for a certain class of $C^*$-flows, which plays a similar r\^{o}le as dimension groups in Vershik--Kerov's theory. We will also explain how the general theory given in the previous paper and the first part of this paper nicely fits the asymptotic representation theory for quantum groups initiated by Ryosuke Sato \cite{Sato:JFA19}.

Let $G = \varinjlim G_n$ be an inductive limit of compact groups, which is usually not locally compact. When $G$ is not locally compact, we cannot utilize standard methods, based on $C^*$-algebras, to investigate unitary representations of $G$. Nevertheless, Vershik--Kerov's asymptotic representation theory (see \cite{VershikKerov:FunctAnalAppl81,VershikKerov:SovMathDokl82} and Kerov's thesis \cite{Kerov:Book}) as well as Olshanski's spherical representation theory (see \cite{Olshanski:Proc90,Olshanski:LNM03,Olshanski:JFA03}) work well for both the infinite symmetric group $\mathfrak{S}_\infty = \varinjlim\mathfrak{S}_n$ (that is locally compact) and the infinite-dimensional unitary group $\mathrm{U}(\infty) = \varinjlim\mathrm{U}(n)$ (that is not locally compact) in an almost parallel fashion. This is a bit surprising phenomenon to us, because the \emph{natural} $C^*$-algebras associated with those groups are of different kinds. Actually, the group $C^*$-algebra $C^*(\mathfrak{S}_\infty)$ is an AF-algebra, while we think, through the analysis of qunatum groups, that the natural $C^*$-algebra of $\mathrm{U}(\infty)$ should be a $C^*$-inductive limit of atomic $W^*$-algebras since the associated branching graph is not locally finite.

Vershik--Kerov's theory should be understood as a theory for characters (rather than representations), and is applicable even for tracial states of general AF-algebras. In their theory, the concept of dimension groups (or ordered $K_0$-groups) of AF-algebras plays an important r\^{o}le; see~\cite{VershikKerov:JSovietMath87,VershikKerov:Proc90}. In fact, the dimension group of the group $C^*$-algebra $C^*(\mathfrak{S}_\infty)$ is computed from the so-called Littlewood--Richardson ring of the inductive sequence $\mathfrak{S}_n$ (and hence it admits a natural ring structure). This suggests us that the part of dealing with dimension groups in Vershik--Kerov's theory is essential from the viewpoint of representation theory. However, we cannot apply the idea of dimension groups to general inductive limits of compact groups directly, because those inductive limits may not be dealt with within the class of AF-algebras since their branching graphs are no longer locally finite as pointed out above. Although Boyer~\cite{Boyer:JFA87} attempted to generalize Vershik--Kerov's use of dimension groups to $\mathrm{U}(\infty)$, etc.\ based on Geissinger's idea~\cite{Geissinger:LNM77}, we focus on Olshanski's recent work \cite{Olshanski:AdvMath16} that introduced an analogous ring for $\mathrm{U}(\infty)$ based on the theory of symmetric functions. In fact, we are interested in understanding the construction of Olshanski's ring in a general setup like Vershik--Kerov's theory \cite{VershikKerov:JSovietMath87} dealing with traces on general AF-algebras.

As mentioned above, Sato \cite{Sato:JFA19} initiated the study of asymptotic representation theory for quantum groups motivated by Gorin's work \cite{Gorin:AdvMath12}. The main idea of that work \cite{Sato:JFA19} is to replace tracial states with KMS states with respect to deformation (or scaling) flows that arise as the effect of $q$-deformation of classical groups. Thus we will construct a natural operator system (and its relative) for a certain class of flows on inductive limits of atomic $W^*$-algebras (whose special cases are AF-algebras), which plays a r\^{o}le of dimension groups in Vershik--Kerov's works and generalizes Olshanski's ring for $\mathrm{U}(\infty)$ naturally. Moreover, we will answer, from the viewpoint of spherical unitary representations, the questions of why KMS states and which inverse temperature are appropriate in the quantum group setting. Note that there are no special representation-theoretic reasons for those choices in Sato's approach because of the character of Vershik--Kerov's theory that his approach is modeled~\mbox{after}.

This paper consists of two parts. The contents are as follows.

In the first part, for a given continuous, inductive flow on a locally atomic $W^*$-algebra, we will introduce an operator system (and its relative) playing a r\^{o}le of dimension groups in Vershik--Kerov's theory, and explain that the resulting operator system has a natural pairing with all the locally normal KMS states with respect to the flow. We also introduce a~certain module structure on the operator system under a certain assumption. We will also show that (the relative of) that operator system coincides with Olshanski's ring in the case of~$\mathrm{U}(\infty)$.\looseness=1

The second (and main) part concerns basic theory of spherical unitary representations for inductive limits $G = \varinjlim G_n$ of compact quantum groups.

\looseness=1
We will explain how our general theory works for those $G$. Namely, we will naturally define the pair $G < G\times G$ and its spherical unitary representations. Then we will show that any spherical unitary representations fall into the framework of $(\alpha^t,\beta)$-spherical representations. As~a~consequence, we will be able to justify the use of KMS states in the asymptotic representation theory for quantum groups and to see which inverse temperature should be selected.

We will also upgrade Sato's notion of quantized characters \cite{Sato:JFA19} to ``quantized character functions over $G$''. This notion is a natural generalization of functions $g \in G \mapsto \chi(g) \in \mathbb{C}$ with characters $\chi$ on an ordinary group $G$.

In closing of the second part, we will explain how to apply the theory we have developed so far to the example $\mathrm{U}_q(\infty)$ from quantum unitary groups $\mathrm{U}_q(n)$. As an application of the notion of quantized character functions, we will give a clearer interpretation of Gorin's $q$-Schur generating functions (see \cite{Gorin:AdvMath12}) in terms of $\mathrm{U}_q(\infty)$. We try to make it clear that all the computations can essentially be done by means of quantized universal enveloping algebras $U_q\mathfrak{gl}(n)$ (although the formulation of spherical unitary representations certainly needs the operator algebraic notion of compact quantum groups due to its character of unitary representations).

\subsection*{Notations} We will use several kinds of tensor products throughout; $\otimes$, $\bar{\otimes}$, $\otimes_{\min}$ and $\otimes_{\max}$ denote algebraic, $W^*$-algebraic, minimal $C^*$-algebraic and maximal $C^*$-algebraic tensor products, respectively. The reader can find their basic facts in Brown--Ozawa's book \cite{BrownOzawa:Book}.

Similarly to our previous paper \cite{Ueda:Preprint20}, we encourage the reader who is not familiar with operator algebras to consult Bratteli--Robinson's two-volume book \cite{BratteliRobinson:Book1,BratteliRobinson:Book2} for basic facts on operator algebras. We will also use the notion of operator systems and their relatives. The necessary basic facts on them are summarized, e.g., in~\cite[Section 2]{MawhinneyTodorov:DissertationesMath18}, and a standard reference book on them is Paulsen's famous one \cite{Paulsen:Book}.

We will simply call unital $*$-representations of unital $*$-algebras just $*$-representations throu\-ghout.

\section{General setup and notations}\label{S2}

Let $A_n$, $n \geq 0$, be an inductive sequence of atomic $W^*$-algebras with separable preduals such that $A_0 = \mathbb{C}$ and that each inclusion $A_n \hookrightarrow A_{n+1}$ is unital and normal, that is, $A_n$ is a unital $W^*$-subalgebra of $A_{n+1}$. Denote by $\mathfrak{Z}_n$ all the minimal projections of the center $\mathcal{Z}(A_n)$. By the general structure of atomic $W^*$-algebras, each $zA_n$ with $z \in \mathfrak{Z}_n$ is identified with all the bounded operators $B(\mathcal{H}_z)$ on a unique separable Hilbert space $\mathcal{H}_z$. We will write $\dim(z) := \dim(\mathcal{H}_z)$ for short.

With the inductive sequence $A_n$ is associated the following graded graph, called the Bratteli diagram: The vertex set is $\mathfrak{Z} := \bigsqcup_{n\geq0}\mathfrak{Z}_n$, a disjoint union, and the edge set is $\mathfrak{E} := \bigsqcup_{n\geq1} \mathfrak{E}_n$, a~disjoint union, with $\mathfrak{E}_n := \{(z_2,z_1) \in \mathfrak{Z}_{n}\times\mathfrak{Z}_{n-1};\, z_2 z_1\neq0\}$. Each pair $(z_2,z_1) \in \mathfrak{E}_n$ should be read as a directed edge from $z_1$ to $z_2$ so that its source $s(z_2,z_1)$ and range $r(z_2,z_1)$ are $z_1$ and~$z_2$, respectively. We also have a function $m\colon \mathfrak{E} \to \mathbb{N}\cup\{\infty\}$, called the multiplicity function, defined by $m(z_2,z_1) := \dim(z_2 z_1(A_n \cap (A_{n-1})'))^{1/2}$ with $(z_2,z_1) \in \mathfrak{E}_n$. As explained in \cite[Section~9.1]{Ueda:Preprint20} one can reconstruct the inductive sequence $A_n$ from the data $(\mathfrak{Z},\mathfrak{E},m)$. However, the multiplicity function is not so important in the analysis below. Another important function on the edges $\mathfrak{E}$ will be recalled in Section \ref{S3.2}.

Let $A = \varinjlim A_n$ be an inductive (or direct) limit $C^*$-algebra (see, e.g., \cite[Chapter 2]{Effros:CBMSBook}). Assume that we have a flow $\alpha^t\colon \mathbb{R} \curvearrowright A$ such that $\alpha^t(A_n) = A_n$ holds for every $t \in \mathbb{R}$ and $n \geq 0$, and moreover, the restriction of $\alpha^t$ to each $A_n$, denoted by $\alpha_n^t\colon \mathbb{R} \curvearrowright A_n$, is continuous in the $u$-topology (see, e.g., \cite[Lemma 7.1]{Ueda:Preprint20} for this topology). This assumption forces every flow $\alpha_n^t$ to fix elements in $\mathcal{Z}(A_n)$. See \cite[Lemma 7.1]{Ueda:Preprint20} for this fact. In particular, the restriction of $\alpha^t$ to~$zA_n$ with $z \in \mathfrak{Z}_n$ gives a continuous flow $\alpha_z^t$ on $zA_n = B(\mathcal{H}_z)$ in the $u$-topology.

\emph{Throughout this paper, we will consider only an inverse temperature $\beta \in \mathbb{R}$, for which there is a $($unique$)$ normal $(\alpha_z^t,\beta)$-KMS state $\tau^{(\alpha_z^t,\beta)}$ on each $zA_n$ with $z \in \mathfrak{Z}_n$.} This is the case for all $\beta \in \mathbb{R}$, when every $zA_n = B(\mathcal{H}_z)$ is finite dimensional. (See, e.g., \cite[Section~7]{Ueda:Preprint20}.) For every $a \in A_n$ we write
\begin{gather*}
E^{(\alpha_n^t,\beta)}(a) := \sum_{z\in\mathfrak{Z}_n} \tau^{(\alpha_z^t,\beta)}(za)z,
\end{gather*}
which defines a faithful normal conditional expectation $E^{(\alpha_n^t,\beta)}\colon A_n \to \mathcal{Z}(A_n)$. (See \cite[equation~(7.2)]{Ueda:Preprint20}, where we denoted it by $E_n^{(\alpha^t,\beta)}$ instead.) This normal conditional expectation can be characterized by the (operator-valued) KMS condition
\begin{gather*}
E^{(\alpha_n^t,\beta)}(ab) = E^{(\alpha_n^t,\beta)}\big(b\alpha^{{\rm i}\beta}(a)\big)
\end{gather*}
for all $a \in (A_n)_{\alpha_n^t}^\infty$ and $b\in A_n$, where $(A_n)_{\alpha_n^t}^\infty$ denotes the $\sigma$-weakly dense $*$-subalgebra of $\alpha_n^t$-analytic elements of $A_n$. When $\alpha_n^t$ is the trivial flow or $\beta=0$, the conditional expectation~$E^{(\alpha_n^t,\beta)}$ must be the center-valued trace. We also remark that
\begin{gather}\label{Eq2.2}
E^{(\alpha_n^t,\beta)}\circ E^{(\alpha_m^t,\beta)}(a) = E^{(\alpha_n^t,\beta)}(a), \qquad
a \in A_m
\end{gather}
holds if $n > m \geq 0$. See \cite[Lemma 7.2(3)]{Ueda:Preprint20}. For the ease of notation, we often write $\tau_z := \tau^{(\alpha_z^t,\beta)}$ and $E_n := E^{(\alpha_n^t,\beta)}$ if the flow $\alpha^t$ and the inverse temperature $\beta$ are clear from context.

We will denote by $K_\beta^\mathrm{ln}(\alpha^t)$ the convex set of all locally normal $(\alpha^t,\beta)$-KMS states on $A$, which is a face of all the $(\alpha^t,\beta)$-KMS states $K_\beta(\alpha^t)$. Here, we recall that an $(\alpha^t,\beta)$-KMS state~$\omega$ on $A$ is locally normal if the restriction of $\omega$ to each $A_n$ is normal. Our previous work shows that any locally normal $(\alpha^t,\beta)$-spherical representation is the standard form of $A$ associated with a locally normal $(\alpha^t,\beta)$-KMS state on $A$, where we borrow the term ``standard form'' from theory of von Neumann (or $W^*$-)algebras. Thus, the investigation of $K_\beta^\mathrm{ln}(\alpha^t)$ is of particular importance in our study.

\section{Representation (operator) system} 

We keep the setting in Section \ref{S2} and fix an inverse temperature $\beta$ with the requirement there throughout. In this section, we are seeking for a suitable algebraic structure playing a r\^{o}le of dimension groups in Vershik--Kerov's asymptotic representation theory (see \cite{VershikKerov:JSovietMath87,VershikKerov:Proc90}).

\subsection{Preparatory consideration} \label{S3.1}
Every $\mathcal{Z}(A_n)$ is naturally identified with all the bounded complex-valued functions $\ell^\infty(\mathfrak{Z}_n)$ over~$\mathfrak{Z}_n$ by $f \in \ell^\infty(\mathfrak{Z}_n) \mapsto \sum_{z \in \mathfrak{Z}_n} f(z)z \in \mathcal{Z}(A_n)$. Accordingly, the predual $\mathcal{Z}(A_n)_*$ is identified with all the summable complex-valued functions $\ell^1(\mathfrak{Z}_n)$ over $\mathfrak{Z}_n$ in such a way that each $\omega \in \mathcal{Z}(A_n)_*$ is considered as a function $z \in \mathfrak{Z}_n \mapsto \omega(z) \in \mathbb{C}$. Thus, we have a natural pairing
\[
\langle \omega, f\rangle_n := \sum_{z\in\mathfrak{Z}_n} \omega(z)f(z), \qquad \omega \in \mathcal{Z}(A_n)_* = \ell^1(\mathfrak{Z}_n), \quad f \in \mathcal{Z}(A_n)=\ell^\infty(\mathfrak{Z}_n).
\]
In what follows, we will freely identify $\mathcal{Z}(A_n) = \ell^\infty(\mathfrak{Z}_n)$ and $\mathcal{Z}(A_n)_* = \ell^1(\mathfrak{Z}_n)$.

Let $\omega \in K_\beta^\mathrm{ln}(\alpha^t)$ be arbitrarily given. The restriction of $\omega$ to each $zA_n = B(\mathcal{H}_z)$ with $z \in \mathfrak{Z}_n$ is a normal $(\alpha^t_z,\beta)$-KMS state. By the uniqueness of KMS states, we have $\omega = \omega(z)\tau_z$ on $zA_n$. Hence $\omega_n := \omega\!\upharpoonright_{\mathcal{Z}(A_n)}$, the restriction to $\mathcal{Z}(A_n)$, is in $\mathcal{Z}(A_n)_*$ and
\begin{equation}\label{Eq3.1}
\omega(a) = \sum_{z \in \mathfrak{Z}_n} \omega(za) = \sum_{z\in\mathfrak{Z}_n} \omega(z)\tau_z(za) = \langle\omega_n, E_n(a)\rangle_n, \qquad a \in A_n.
\end{equation}
Remark that this equation must hold for every $n$. Moreover, the right-most side of \eqref{Eq3.1} clearly gives a normal $(\alpha_n^t,\beta)$-KMS state on $A_n$. Consequently, the duality pair $(\mathcal{Z}(A_n)_*,\mathcal{Z}(A_n))$ contains essential information of $K_\beta^\mathrm{ln}(\alpha^t)$.

It is known, see, e.g., \cite[Proposition 6.1]{Reich:K-theory01}, that the $K_0$-group $K_0(A_n)$ is isomorphic, via the center-valued trace, to the pointwise additive group consisting of $f \in \ell^\infty(\mathfrak{Z}_n,\mathbb{R})$, the real-valued bounded functions on $\mathfrak{Z}_n$, such that $f(z) \in \{ k/\dim(z);\, k \in \mathbb{Z}\}$ for every $z \in \mathfrak{Z}_n$. Moreover, the equivalence class $[1]$ of the unit $1 \in A_n$ in $K_0(A_n)$ corresponds, via the isomorphism, to the constant function $f\equiv1$. Consequently, we may and do assume that $(K_0(A_n),[1])$ is an additive subgroup of the hermitian elements $\mathcal{Z}(A_n)_h := \{ a \in \mathcal{Z}(A_n);\, a =a^*\} = \ell^\infty(\mathfrak{Z}_n,\mathbb{R})$ and the unit $1 \in \mathcal{Z}(A_n)$. Note, in particular, that $K_0(A_n)\otimes_{\mathbb{Z}}\mathbb{R} \subset K_0(A_n)\otimes_{\mathbb{Z}}\mathbb{C}$ are exactly $\mathcal{Z}(A_n)_h \subset \mathcal{Z}(A_n)$, respectively, when $A_n$ is finite dimensional.

Keeping these observations in mind, we will consider $(\mathcal{Z}(A_n),1)$ as an operator system (an~ordered $*$-vector space with an additional order structure) in place of $(K_0(A_n),[1])$.

\subsection{Constructions} \label{S3.2}

Motivated by the consideration in the previous section we will introduce an operator system $\mathcal{S}(\alpha^t,\beta)$ as an analog of dimension groups associated with $(\alpha^t,\beta)$ and a kind of its predual.

\subsubsection{Inductive and projective sequences} \label{S3.2.1}
The diagram
\[
\xymatrix{
A_n \ar@{^{(}-^{>}}[r] & A_{n+1} \ar[d]^{E_{n+1}}\\
\mathcal{Z}(A_n) \ar@{^{(}-^{>}}[u] \ar@{.>}[r]
& \mathcal{Z}(A_{n+1})
}
\]
defines a normal UCP map $\Theta_{n+1,n} \colon \mathcal{Z}(A_n) \to \mathcal{Z}(A_{n+1})$ as the restriction of $E_{n+1}$ to $\mathcal{Z}(A_n)$ ($\subset A_n \subseteq A_{n+1}$). For each $f \in \ell^\infty(\mathfrak{Z}_n) = \mathcal{Z}(A_n)$ we have
\[
\Theta_{n+1,n}(f)=\sum_{z\in\mathfrak{Z}_n} f(z)E_{n+1}(z) = \sum_{z\in\mathfrak{Z}_n} f(z) \sum_{z' \in \mathfrak{Z}_{n+1}} \tau_{z'}(z'z)z'
\]
and hence
\begin{equation}\label{Eq3.2}
\Theta_{n+1,n}(f)(z') = \sum_{z \in \mathfrak{Z}_n} \tau_{z'}(z'z)f(z), \qquad z' \in \mathfrak{Z}_{n+1}
\end{equation}
holds as an element in $\ell^\infty(\mathfrak{Z}_{n+1})$. In what follows, we simply write $\Theta_{n,m} := \Theta_{n,n-1}\circ\cdots\circ\Theta_{m+1,m}$, which coincides with the restriction of $E_n$ to $\mathcal{Z}(A_m)$ thanks to \eqref{Eq2.2}, for each pair $n > m$.

We then consider the (pre)dual map $\Theta_{n+1,n}^* \colon \mathcal{Z}(A_{n+1})_* \to \mathcal{Z}(A_n)_*$. For any $\omega \in \mathcal{Z}(A_{n+1})_*$ ($=\ell^1(\mathfrak{Z}_{n+1})$) and $f \in \mathcal{Z}(A_n)$ ($=\ell^\infty(\mathfrak{Z}_n)$), we have
\begin{align*}
\langle\Theta_{n+1,n}^*(\omega),f\rangle_n
&=
\langle\omega,\Theta_{n+1,n}(f)\rangle_{n+1} \\
&=
\sum_{z'\in\mathfrak{Z}_{n+1}} \omega(z') \Theta_{n+1,n}(f)(z') \\
&=
\sum_{z' \in \mathfrak{Z}_{n+1}} \sum_{z \in \mathfrak{Z}_n} \omega(z') \tau_{z'}(z'z)f(z),
\end{align*}
and hence
\begin{equation}\label{Eq3.3}
\Theta_{n+1,n}^*(\omega)(z) = \sum_{z' \in \mathfrak{Z}_{n+1}} \omega(z') \tau_{z'}(z'z), \qquad z \in \mathfrak{Z}_n
\end{equation}
holds as an element of $\ell^1(\mathfrak{Z}_n) = \mathcal{Z}(A_n)_*$. We will write $\Phi_{n,n+1} := \Theta_{n+1,n}^*$ in what follows.

In this way, we have the inductive sequence
\begin{equation}\label{Eq3.4}
\cdots \overset{\Theta_{n,n-1}}{\longrightarrow} \mathcal{Z}(A_n)
\overset{\Theta_{n+1,n}}{\longrightarrow} \mathcal{Z}(A_{n+1})
\overset{\Theta_{n+2,n+1}}{\longrightarrow} \cdots
\end{equation}
in the category of operator systems and the projective sequence
\begin{equation}\label{Eq3.5}
\cdots \overset{\Phi_{n-1,n}}{\longleftarrow} \mathcal{Z}(A_n)_*
\overset{\Phi_{n,n+1}}{\longleftarrow} \mathcal{Z}(A_{n+1})_*
\overset{\Phi_{n+1,n+2}}{\longleftarrow} \cdots
\end{equation}
in the category of Banach spaces, that is, every $\Phi_{n,n+1}$ is contractive since every $\Theta_{n+1,n}$ is UCP. We remark that inductive sequence \eqref{Eq3.4} is precisely Banach space dual to projective sequence~\eqref{Eq3.5}.

Here we recall a definition from \cite[Section 7]{Ueda:Preprint20}.

\begin{Definition}\label{D3.1} We define the \emph{link} $\kappa = \kappa_{(\alpha^t,\beta)} \colon \bigsqcup_{n\geq1} \mathfrak{Z}_n\times\mathfrak{Z}_{n-1} \to [0,1]$ by
\[
\kappa(z',z) :=
\begin{cases}
\tau_{z'}(z'z), & (z',z) \in \mathfrak{E},
\\
0, & \text{otherwise}.
\end{cases}
\]
\end{Definition}

Note that $[\kappa(z',z)]$ defines a stochastic matrix over $\mathfrak{Z}_{n+1}\times\mathfrak{Z}_n$ for each $n \geq 0$. See \cite[Lemma~7.9]{Ueda:Preprint20}. Identities \eqref{Eq3.2} and \eqref{Eq3.3} can be rewritten as
\begin{gather}
\label{Eq3.6}
\Theta_{n+1,n}(f)(z') = \sum_{z\in\mathfrak{Z}_n} \kappa(z',z)f(z), \\
\label{Eq3.7}
\Phi_{n,n+1}(\omega)(z) = \sum_{z'\in\mathfrak{Z}_{n+1}} \omega(z')\kappa(z',z),
\end{gather}
respectively. The predual $\mathcal{Z}(A_n)_*$ is equipped with a natural involution $\omega^*(a) := \overline{\omega(a^*)}$ for $a \in \mathcal{Z}(A_n)$, and this involution is nothing but the standard involution on $\ell^1(\mathfrak{Z}_n)$ taking pointwise complex-conjugation.

Observe that all the hermitian elements in $\mathcal{Z}(A_n)_* = \ell^1(\mathfrak{Z}_n)$, i.e., $\mathcal{Z}(A_n)_{*,h} = \ell^1(\mathfrak{Z}_n,\mathbb{R})$, becomes a (real) Banach lattice. By \eqref{Eq3.7} every morphism $\Phi_{n,n+1}$ is positive with respect to this Banach lattice structure. Namely, we have
\begin{equation*}
\cdots \overset{\Phi_{n-1,n}}{\longleftarrow} \mathcal{Z}(A_n)_{*,h} \overset{\Phi_{n,n+1}}{\longleftarrow} \mathcal{Z}(A_{n+1})_{*,h} \overset{\Phi_{n+1,n+2}}{\longleftarrow} \cdots
\end{equation*}
in the category of Banach lattices (whose morphisms should be positive and contractive), and projective sequence \eqref{Eq3.5} is obtained as the complexification of this sequence. Hence, projective sequence \eqref{Eq3.5} can be understood in the category of complex Banach lattices. See, e.g., \cite[p.~157]{Pietsch:HistoryBook} for the notion of complex Banach lattices. However, we will not take any viewpoint of Banach lattices in what follows.

\begin{Remark}\label{R3.2}
Assume for a while that $\alpha^t$ is the trivial flow or $\beta=0$ and further that all $A_n$ are finite dimensional, that is, $A$ is an AF-algebra. It is not difficult to see that the matrix $[d(z')\kappa(z',z)/d(z)]$ over $\mathfrak{Z}_{n+1}\times\mathfrak{Z}_n$ is the usual group homomorphism from $\mathbb{Z}^{\mathfrak{Z}_n}$ ($\cong K_0(A_n)$) to $\mathbb{Z}^{\mathfrak{Z}_{n+1}}$ ($\cong K_0(A_{n+1})$) arising from $A_n \hookrightarrow A_{n+1}$ in a standard representation of dimension groups for finite-dimensional $C^*$-algebras. Thus, $\mathcal{Z}(A_n) \overset{\Theta_{n+1,n}}{\longrightarrow} \mathcal{Z}(A_{n+1})$ coincides with $K_0(A_n)\otimes_\mathbb{Z}\mathbb{C} \to K_0(A_{n+1})\otimes_\mathbb{Z}\mathbb{C}$ arising from $A_n \hookrightarrow A_{n+1}$ under the realization of $K_0(A_n)$ inside $\mathcal{Z}(A_n)$ explained in Section \ref{S3.1}, and accordingly, $\mathcal{Z}(A_n)_h \overset{\Theta_{n+1,n}}{\longrightarrow} \mathcal{Z}(A_{n+1})_h$ does with $K_0(A_n)\otimes_\mathbb{Z}\mathbb{R} \to K_0(A_{n+1})\otimes_\mathbb{Z}\mathbb{R}$.
\end{Remark}

Because of this remark, we will regard inductive sequence \eqref{Eq3.4} as a counterpart of inductive sequences of dimension groups. We also point out that this is completely consistent with the picture that Olshanski explained, in \cite[Section 1.4]{Olshanski:AdvMath16}, about his view of ``Fourier transform'' on~$\mathrm{U}(\infty)$.

\subsubsection{Inductive limit} 

The concept of inductive limits of operator systems was first introduced and used by Kirchberg~\cite{Kirchberg:JFA95}, but he treated it in the category of norm-complete operator systems. A systematic study of the concept was recently conducted by Mawhinney and Todorov \cite{MawhinneyTodorov:DissertationesMath18} in the category of general (not necessarily norm-complete) operator systems. We will follow this recent study to discuss the inductive limit of \eqref{Eq3.4}.\pagebreak

\begin{Definition}
\quad
\begin{enumerate}\itemsep=0pt
\item[$1.$] Let
\[
\ddot{\mathcal{S}}(\alpha^t,\beta) := \varinjlim \big(\mathcal{Z}(A_n) \overset{\Theta_{n+1,n}}{\longrightarrow} \mathcal{Z}(A_{n+1})\big)
\]
be the inductive limit in the category $\mathbf{MOU}$ of matrix ordered $*$-vector spaces with mat\-rix order unit {\rm(}say {\rm MOU} spaces{\rm)}, equipped with matrix order unit $\ddot{e}$ and UCP maps $\ddot{\Theta}_{\infty,n} \colon \mathcal{Z}(A_n) \to \ddot{\mathcal{S}}(\alpha^t,\beta)$ such that $\ddot{\Theta}_{\infty,n+1}\circ\Theta_{n+1,n} = \ddot{\Theta}_{\infty,n}$ holds for every $n \geq 0$.

\item[$2.$] Taking the ``Archimedeation'' quotient (see, e.g., \cite[Theorem 2.6 and~Remark 2.7]{MawhinneyTodorov:DissertationesMath18}) gives the inductive limit
\[
\mathcal{S}(\alpha^t,\beta) := \varinjlim \big(\mathcal{Z}(A_n) \overset{\Theta_{n+1,n}}{\longrightarrow} \mathcal{Z}(A_{n+1})\big)
\]
in the category $\mathbf{OS}$ of operator systems, equipped with matrix order unit $e$ and UCP maps $\Theta_{\infty,n} \colon \mathcal{Z}(A_n) \to \mathcal{S}(\alpha^t,\beta)$ such that $\Theta_{\infty,n+1}\circ\Theta_{n+1,n} = \Theta_{\infty,n}$ holds for every $n \geq 0$. Namely, there is a UCP quotient map $q_\mathcal{S} \colon \ddot{\mathcal{S}}(\alpha^t,\beta) \twoheadrightarrow \mathcal{S}(\alpha^t,\beta)$ such that
\begin{itemize}\itemsep=0pt
\item[$(i)$] $\Theta_{\infty,n} = q_\mathcal{S}\circ\ddot{\Theta}_{\infty,n}$ holds for every $n \geq 0$,
\item[$(ii)$] $q_\mathcal{S}(\ddot{\Theta}_{\infty,n}(f)) = 0$ if and only if $\lim_{m\to\infty} \Vert\Theta_{m,n}(f)\Vert = 0$ \cite[Proposition 3.9]{MawhinneyTodorov:DissertationesMath18}.
\end{itemize}
\end{enumerate}
\end{Definition}

We call these $\ddot{\mathcal{S}}(\alpha^t,\beta)$ and $\mathcal{S}(\alpha^t,\beta)$ the \emph{representation system} and the \emph{representation operator system}, respectively, of the $C^*$-flow $\alpha^t$ at inverse temperature $\beta$.

\begin{Remarks}\label{R3.4} \quad
\begin{enumerate}\itemsep=0pt
\item[$1.$] By its construction, $\ddot{\mathcal{S}}(\alpha^t,\beta)$ is also the inductive limit in the category $\mathbf{OU}$ of ordered $*$-vactor spaces with order unit (say {\rm OU} spaces) as well as even in the category of vector spaces. Thus, for any sequence of (unital positive) linear maps $\Psi_n \colon \mathcal{Z}(A_n) \to \mathcal{V}$ with a single ({\rm OU}) vector space $\mathcal{V}$ such that $\Psi_{n+1}\circ\Theta_{n+1,n} = \Psi_n$ holds for every $n \geq 0$, the correspondence $\ddot{\Theta}_{\infty,n}(f) \mapsto \Psi_n(f)$ defines a well-defined (unital positive) linear map from $\ddot{\mathcal{S}}(\alpha^t,\beta)$ to $\mathcal{V}$. See the proof of \cite[Theorem 3.5]{MawhinneyTodorov:DissertationesMath18}. This property allows us to describe the vector space $\ddot{\mathcal{S}}(\alpha^t,\beta)$ itself as follows. Let $\sum_{n\geq0} \mathcal{Z}(A_n)$ be the algebraic direct sum of the $\mathcal{Z}(A_n)$. Define a transition operator $P \colon \sum_{n\geq0} \mathcal{Z}(A_n) \to \sum_{n\geq0} \mathcal{Z}(A_n)$ by $f \in \mathcal{Z}(A_n) \mapsto \Theta_{n+1,n}(f) \in \mathcal{Z}(A_{n+1})$ in the direct sum $\sum_{n\geq0} \mathcal{Z}(A_n)$. With the identity operator $I$ on $\sum_{n\geq0}\mathcal{Z}(A_n)$, the vector space $\ddot{\mathcal{S}}(\alpha^t,\beta)$ is identified with the quotient space
\[
\bigg(\sum_{n\geq0}\mathcal{Z}(A_n)\bigg)\bigg\slash\operatorname{Im}(P-I)
\]
by the image of $P-I$, and $\ddot{\Theta}_{\infty,n}$ is given by the composition of the inclusion map $\mathcal{Z}(A_n) \hookrightarrow \sum_{n\geq0}\mathcal{Z}(A_n)$ and the quotient map. This can be shown by using \cite[Remark~3.1]{MawhinneyTodorov:DissertationesMath18}; actually, $\sum_{n=0}^N \ddot{\Theta}_{\infty,n}(f_n) = 0$ implies $\sum_{n=0}^N \Theta_{N',n}(f_n) = 0$ in $\mathcal{Z}(A_{N'})$ for some $N' \geq N$ so that $f_0 = - g_0$ and $f_{n+1} = \Theta_{n+1,n}(g_{n}) - g_{n+1}$ ($n \geq 0$) with $g_n := \sum_{k=0}^{n-1} \Theta_{n,k}(f_k) - f_n$ ({\it n.b.}, $g_n = 0$ if $n > N'$ by assumption). This description of $\ddot{\mathcal{S}}(\alpha^t,\beta)$ is completely consistent with the description of dimension groups used in \cite[Chapter 2, Section 2]{VershikKerov:JSovietMath87} and \cite[Section~1.1]{VershikKerov:Proc90}.

\item[$2.$] Since $\mathbf{OS}$ is understood as the category of \emph{not necessarily norm-complete} operator systems, $\mathcal{S}(\alpha^t,\beta) = \bigcup_{n\geq0} \Theta_{\infty,n}(\mathcal{Z}(A_n))$ holds by definition. In our analysis we regard the representation (operator) system $\mathcal{S}(\alpha^t,\beta)$ or $\ddot{\mathcal{S}}(\alpha^t,\beta)$ as a counterpart of the complexification of ``dimension group'' associated with $A$ because of Remark \ref{R3.2}.

\item[$3.$] The hermitian elements $\ddot{\mathcal{S}}(\alpha^t,\beta)_h = \{ x \in \ddot{\mathcal{S}}(\alpha^t,\beta);\, x=x^*\}$ and $\mathcal{S}(\alpha^t,\beta)_h = \{ x \in \mathcal{S}(\alpha^t,\beta);\, x=x^*\}$ are exactly $\bigcup_{n\geq0} \ddot{\Theta}_{\infty,n}(\mathcal{Z}(A_n)_h)$ and $\bigcup_{n\geq0} \Theta_{\infty,n}(\mathcal{Z}(A_n)_h)$, respectively. One may regard each of these ``real subspaces'' as a counterpart of ``dimension group'' of~$A$ to study $(\alpha^t,\beta)$-spherical representations.
 \end{enumerate}
\end{Remarks}

The representation operator system $\mathcal{S}(\alpha^t,\beta)$ has the following special feature:

\begin{Proposition}
Any unital positive map $\Psi \colon \mathcal{S}(\alpha^t,\beta) \to \mathcal{T}$ with another operator system $\mathcal{T}$ is automatically CP.
\end{Proposition}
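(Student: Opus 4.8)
\emph{Proof proposal.} The plan is to show that for every $k\geq 1$ the ampliation $\Psi^{(k)}:=\mathrm{id}_{M_k}\otimes\Psi$ is positive on $M_k(\mathcal{S}(\alpha^t,\beta))$, which is precisely complete positivity of $\Psi$. Two facts drive the argument: the classical statement that a unital positive map out of a commutative $C^*$-algebra is automatically completely positive, and the concrete description of the matrix order on the $\mathbf{OS}$-inductive limit $\mathcal{S}(\alpha^t,\beta)$ from \cite{MawhinneyTodorov:DissertationesMath18}.

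First I would pass to the finite stages. For each $n$ put $\Psi_n:=\Psi\circ\Theta_{\infty,n}\colon \mathcal{Z}(A_n)\to\mathcal{T}$. Since $\Theta_{\infty,n}$ is UCP and $\Psi$ is unital and positive, $\Psi_n$ is unital and positive; and since $\mathcal{Z}(A_n)=\ell^\infty(\mathfrak{Z}_n)$ is a commutative $C^*$-algebra, $\Psi_n$ is completely positive. (Inside $\mathbf{OS}$: embed $\mathcal{T}$ unitally and completely order isomorphically into some $B(\mathcal{H})$ by the Choi--Effros theorem, and apply the standard result on positive maps out of commutative $C^*$-algebras, see \cite[Chapter~3]{Paulsen:Book}.) Record also the compatibility $\Psi_{n+1}\circ\Theta_{n+1,n}=\Psi\circ\Theta_{\infty,n+1}\circ\Theta_{n+1,n}=\Psi\circ\Theta_{\infty,n}=\Psi_n$, so that $\Psi_m\circ\Theta_{m,n}=\Psi_n$ for all $m\geq n$.

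Now fix $k$ and a positive $[x_{ij}]\in M_k(\mathcal{S}(\alpha^t,\beta))_+$. Since $\mathcal{S}(\alpha^t,\beta)=\bigcup_n \Theta_{\infty,n}(\mathcal{Z}(A_n))$ (Remarks~\ref{R3.4}(2)) and the $\Theta_{\infty,n}$ are compatible, there exist $n$ and $[z_{ij}]\in M_k(\mathcal{Z}(A_n))$ with $[x_{ij}]=\Theta_{\infty,n}^{(k)}([z_{ij}])$. Using the description of the cones in the colimit --- the positivity analogue of \cite[Proposition~3.9]{MawhinneyTodorov:DissertationesMath18}: the $\mathbf{MOU}$-colimit cone at level $k$ is the directed union of the images $\ddot{\Theta}_{\infty,m}^{(k)}(M_k(\mathcal{Z}(A_m))_+)$, and the Archimedeanization quotient $q_{\mathcal{S}}$ only adds the order-unit $\epsilon$-condition --- positivity of $[x_{ij}]$ yields, for every $\epsilon>0$, some $m\geq n$ with $\Theta_{m,n}^{(k)}([z_{ij}])+\epsilon\,1_{M_k(\mathcal{Z}(A_m))}\in M_k(\mathcal{Z}(A_m))_+$. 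Applying the completely positive map $\Psi_m$ and using $\Psi_m\circ\Theta_{m,n}=\Psi_n=\Psi\circ\Theta_{\infty,n}$ together with $\Psi(e)=1_{\mathcal{T}}$ gives
\[
\big[\Psi(x_{ij})\big]+\epsilon\,1_{M_k(\mathcal{T})}\;=\;\Psi_m^{(k)}\big(\Theta_{m,n}^{(k)}([z_{ij}])+\epsilon\,1_{M_k(\mathcal{Z}(A_m))}\big)\;\in\;M_k(\mathcal{T})_+ .
\]
Letting $\epsilon\downarrow 0$ and using that the matrix order unit of the operator system $\mathcal{T}$ is Archimedean at level $k$, we conclude $[\Psi(x_{ij})]\in M_k(\mathcal{T})_+$. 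Since $k$ was arbitrary, $\Psi$ is CP.

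The step I expect to be the main obstacle is the bookkeeping in the previous paragraph: pinning down precisely how positivity in $M_k(\mathcal{S}(\alpha^t,\beta))$ is witnessed at a finite stage, i.e.\ that after adding $\epsilon e^{(k)}$ a positive element is the image of a genuinely positive matrix over some $\mathcal{Z}(A_m)$. This requires tracking both the $\mathbf{MOU}$-colimit construction and the Archimedeanization quotient of \cite{MawhinneyTodorov:DissertationesMath18} at all matrix levels. A more structural but less self-contained alternative is to note that $\mathcal{S}(\alpha^t,\beta)$ is an $\mathbf{OS}$-inductive limit, along UCP maps, of operator systems carrying the minimal operator system structure, and to invoke that this class is closed under such limits; but the $\epsilon$-argument above sidesteps the need for that fact.
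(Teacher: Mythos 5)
Your argument is correct and is essentially the paper's proof: both reduce to the fact that a unital positive map on the commutative $C^*$-algebra $\mathcal{Z}(A_m)$ is CP, witness matrix positivity at a finite stage of the inductive system, and close with an $\epsilon$-perturbation by the order unit plus the Archimedean property of $\mathcal{T}$. The bookkeeping you flag is precisely what the paper handles by splitting the argument into two steps --- first showing $\Psi\circ q_{\mathcal{S}}$ is CP on $\ddot{\mathcal{S}}(\alpha^t,\beta)$, where positivity is witnessed exactly at a finite level, and then descending through the Archimedeanization quotient by writing $Y+r\ddot{e}^{(k)}=Z_r+N_r$ with $N_r\in\mathbb{M}_k(\mathrm{Ker}(q_{\mathcal{S}}))$ --- rather than merging the two as you do.
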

\begin{proof}
We need to recall the detailed construction of operator system inductive limits in \cite[Section 4.2]{MawhinneyTodorov:DissertationesMath18}. Write $\ddot{\mathcal{S}} = \ddot{\mathcal{S}}(\alpha^t,\beta)$ and $\mathcal{S} := \mathcal{S}(\alpha^t,\beta)$ for the ease of notation. We will use the standard notations such as all the $k\times k$ matrices $\mathbb{M}_k(\ddot{\mathcal{S}})$ whose entries are from $\ddot{\mathcal{S}}$, and $\ddot{\Theta}_{\infty,n}^{(k)}(X) := [\ddot{\Theta}_{\infty,n}(X_{ij})]$ with $X = [X_{ij}] \in \mathbb{M}_k(\ddot{\mathcal{S}})$; see \cite[Section 2.2]{MawhinneyTodorov:DissertationesMath18}.

We observe that $\ddot{\Theta}_{\infty,n}^{(k)}(X) \geq 0$ in $\mathbb{M}_k(\ddot{\mathcal{S}})$ implies that there is an $m > n$ so that $\Theta_{m,n}^{(k)}(X) \geq 0$ in $\mathbb{M}_k(\mathcal{Z}(A_m))$. Then we have
\[
(\Psi\circ q_\mathcal{S})^{(k)}\big(\ddot{\Theta}_{\infty,n}^{(k)}(X)\big) = (\Psi\circ q_\mathcal{S}\circ\ddot{\Theta}_{\infty,m})^{(k)}\big(\Theta_{m,n}^{(k)}(X)\big).
\]
Since $\mathcal{Z}(A_m)$ is a commutative $C^*$-algebra, $\Psi\circ q_\mathcal{S}\circ\ddot{\Theta}_{\infty,m}$ must be CP (see, e.g., \cite[Theorem~3.11]{Paulsen:Book}). Therefore, $(\Psi\circ q_\mathcal{S})^{(k)}\big(\ddot{\Theta}_{\infty,n}^{(k)}(X)\big) \geq 0$ so that $\Psi\circ q_\mathcal{S}$ is CP.

We will confirm that $\Psi$ itself is CP. Choose an arbitrary $q_\mathcal{S}^{(k)}(Y) \geq 0$ in $\mathbb{M}_k(\mathcal{S})$. By construction, for any $r > 0$, there are $Z_r \geq 0$ in $\mathbb{M}_k(\ddot{\mathcal{S}})$ and $N_r \in \mathbb{M}_k(\mathrm{Ker}(q_\mathcal{S}))$ so that $Y + r \ddot{e}^{(k)} = Z_r + N_r$ holds. Applying $\Psi^{(k)}\circ q_\mathcal{S}^{(k)}$ to this identity we have
\[
\Psi^{(k)}\big(q_\mathcal{S}^{(k)}(Y)\big) + r\Psi(e)^{(k)} = \Psi^{(k)}\big(q_\mathcal{S}^{(k)}(Z_r)\big) = (\Psi\circ q_\mathcal{S})^{(k)}(Z_r) \geq 0
\]
in $\mathbb{M}_k(\mathfrak{T})$. By the Archimedean property of $\mathcal{T}$ we conclude that $\Psi^{(k)}\big(q_\mathcal{S}^{(k)}(Y)\big) \geq 0$.
\end{proof}

\subsubsection{Projective limit} 

Since every $\Phi_{n,n+1}$ is contractive as remarked in Section \ref{S3.2.1} we can take the projective limit of \eqref{Eq3.5} in the category of Banach spaces. The projective limit is
\begin{gather*}
\mathcal{L}(\alpha^t,\beta) := \big\{ \omega = (\omega_n);\, \omega_n \in \mathcal{Z}(A_n)_*,\, \Phi_{n,n+1}(\omega_{n+1}) = \omega_n\, \text{for all $n \geq0$},\, \sup_n \Vert\omega_n\Vert < +\infty \big\}
\end{gather*}
equipped with norm $\Vert\omega\Vert := \sup_n \Vert\omega_n\Vert$ for $\omega=(\omega_n) \in \mathcal{L}(\alpha^t,\beta)$ and contractive linear maps $\Phi_{n,\infty} \colon \mathcal{L}(\alpha^t,\beta) \to \mathcal{Z}(A_n)_*$ sending $\omega=(\omega_n)$ to $\omega_n$. It is well known that $(\mathcal{L}(\alpha^t,\beta),\Vert\,\cdot\,\Vert)$ becomes a Banach space.

Here is a possible duality relation between $\mathcal{L}(\alpha^t,\beta)$ and $\mathcal{S}(\alpha^t,\beta)$.

\begin{Proposition}\label{P3.6} There exists an isometric embedding $\iota \colon \mathcal{L}(\alpha^t,\beta) \hookrightarrow \mathcal{S}(\alpha^t,\beta)^*$ such that
\[
\iota(\omega)(\Theta_{\infty,n}(f)) = \langle \Phi_{n,\infty}(\omega),f\rangle_n, \qquad f \in \mathcal{Z}(A_n),\quad \omega \in \mathcal{L}(\alpha^t,\beta).
\]
\end{Proposition}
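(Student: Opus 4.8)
The plan is to define $\iota$ at the algebraic level by the prescribed formula and then to verify, in order, that it is well defined, linear, bounded, and isometric. The computational engine throughout is the elementary compatibility identity
\[
\langle\omega_n,f\rangle_n=\langle\omega_m,\Theta_{m,n}(f)\rangle_m,\qquad m\ge n,\quad f\in\mathcal{Z}(A_n),
\]
valid for any thread $\omega=(\omega_n)\in\mathcal{L}(\alpha^t,\beta)$: it follows by iterating $\Phi_{n,n+1}(\omega_{n+1})=\omega_n$ to $\Phi_{n,m}(\omega_m)=\omega_n$ and recalling that $\Phi_{n,m}:=\Phi_{n,n+1}\circ\cdots\circ\Phi_{m-1,m}$ is precisely the predual of $\Theta_{m,n}$.

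Since $\mathcal{S}(\alpha^t,\beta)=\bigcup_{n\ge0}\Theta_{\infty,n}(\mathcal{Z}(A_n))$ by Remarks \ref{R3.4}(2), every $x\in\mathcal{S}(\alpha^t,\beta)$ is of the form $x=\Theta_{\infty,n}(f)$, and I would set $\iota(\omega)(x):=\langle\Phi_{n,\infty}(\omega),f\rangle_n=\langle\omega_n,f\rangle_n$. The crux is well-definedness. Suppose $\Theta_{\infty,n}(f)=\Theta_{\infty,\ell}(g)$; with $m:=\max(n,\ell)$ and $h:=\Theta_{m,n}(f)-\Theta_{m,\ell}(g)\in\mathcal{Z}(A_m)$ one has $\Theta_{\infty,m}(h)=0$, and the identity above rewrites $\langle\omega_n,f\rangle_n-\langle\omega_\ell,g\rangle_\ell=\langle\omega_m,h\rangle_m$, so it suffices to show this vanishes. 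Here the boundedness condition defining $\mathcal{L}(\alpha^t,\beta)$ enters: by the kernel description of the inductive-limit maps (item $(ii)$ of the Definition, i.e.\ \cite[Proposition~3.9]{MawhinneyTodorov:DissertationesMath18}), $\Theta_{\infty,m}(h)=0$ forces $\lim_{k\to\infty}\|\Theta_{k,m}(h)\|=0$, whence $|\langle\omega_m,h\rangle_m|=|\langle\omega_k,\Theta_{k,m}(h)\rangle_k|\le(\sup_j\|\omega_j\|)\,\|\Theta_{k,m}(h)\|\to0$ as $k\to\infty$. Linearity of $\iota(\omega)$ then follows because finitely many elements of $\mathcal{S}(\alpha^t,\beta)$ always admit representatives over a common $\mathcal{Z}(A_n)$, and linearity of $\omega\mapsto\iota(\omega)$ is immediate from the formula.

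For the norm I would prove $\|\iota(\omega)\|_{\mathcal{S}(\alpha^t,\beta)^*}=\|\omega\|=\sup_n\|\omega_n\|$ by two inequalities. The lower bound is immediate: $\Theta_{\infty,n}$ is UCP, hence contractive, so $\|\Theta_{\infty,n}(f)\|\le\|f\|$, while $\iota(\omega)(\Theta_{\infty,n}(f))=\langle\omega_n,f\rangle_n$; taking the supremum over $f$ in the unit ball of $\mathcal{Z}(A_n)$ gives $\|\iota(\omega)\|\ge\|\omega_n\|$, and then over $n$ gives $\ge\|\omega\|$. For the upper bound I would invoke the norm formula for operator-system inductive limits, $\|\Theta_{\infty,n}(f)\|_{\mathcal{S}(\alpha^t,\beta)}=\lim_{m\to\infty}\|\Theta_{m,n}(f)\|$ — the limit existing since $\|\Theta_{m,n}(f)\|$ is nonincreasing in $m$ (each $\Theta_{m+1,m}$ being contractive), and the equality coming from unwinding the $\mathbf{MOU}$-limit order together with the Archimedeanization quotient of \cite{MawhinneyTodorov:DissertationesMath18}. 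Then for $x=\Theta_{\infty,n}(f)$ and every $m\ge n$ the compatibility identity gives $|\iota(\omega)(x)|=|\langle\omega_m,\Theta_{m,n}(f)\rangle_m|\le\|\omega_m\|\,\|\Theta_{m,n}(f)\|$; letting $m\to\infty$ yields $|\iota(\omega)(x)|\le\|\omega\|\,\|x\|_{\mathcal{S}(\alpha^t,\beta)}$, so that $\iota(\omega)\in\mathcal{S}(\alpha^t,\beta)^*$ with $\|\iota(\omega)\|\le\|\omega\|$. Combining the two bounds, $\iota$ is isometric, hence injective, and is the desired embedding.

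The only genuinely non-routine steps are the well-definedness — which is exactly where the boundedness defining $\mathcal{L}(\alpha^t,\beta)$ meets the (non-injective) structure of the operator-system inductive limit — and the norm formula $\|\Theta_{\infty,n}(f)\|=\lim_m\|\Theta_{m,n}(f)\|$, which I would extract from the Mawhinney--Todorov construction rather than reprove; everything else is a direct unwinding of the pairing.
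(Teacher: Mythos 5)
Your proposal is correct and takes essentially the same route as the paper: your direct well-definedness check on $\mathcal{S}(\alpha^t,\beta)$ via the kernel criterion $\lim_m\Vert\Theta_{m,n}(\cdot)\Vert=0$ is exactly the paper's two-step argument (define $\ddot{\iota}(\omega)$ on $\ddot{\mathcal{S}}(\alpha^t,\beta)$ from the compatibility identity, then show $\mathrm{Ker}(q_\mathcal{S})\subseteq\mathrm{Ker}(\ddot{\iota}(\omega))$ using \cite[Proposition~4.16]{MawhinneyTodorov:DissertationesMath18}) collapsed into one, and both the upper bound $\Vert\iota(\omega)\Vert\le\Vert\omega\Vert$ and the lower bound via testing against unit-ball elements of $\mathcal{Z}(A_m)$ coincide in substance with the paper's proof.
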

\begin{proof}
For any $\omega \in \mathcal{L}(\alpha^t,\beta)$, we observe that
\[
\langle \Phi_{n+1,\infty}(\omega),\Theta_{n+1,n}(f)\rangle_{n+1}
=
\langle \Phi_{n,n+1}(\Phi_{n+1,\infty}(\omega)),f)_n = \langle \Phi_{n,\infty}(\omega),f)_n
\]
for every $f \in \mathcal{Z}(A_n)$. Hence, $\ddot{\Theta}_{\infty,n}(f) \mapsto \langle \Phi_{n,\infty}(\omega),f\rangle_n$ defines a well-defined linear functional~$\ddot{\iota}(\omega)$ on $\ddot{\mathcal{S}}(\alpha^t,\beta)$.

For any $\omega \in \mathcal{L}(\alpha^t,\beta)$ and $f \in \mathcal{Z}(A_n)$, we have
\begin{align*}
|\ddot{\iota}(\omega)(\ddot{\Theta}_{\infty,n}(f))|
&=
|\ddot{\iota}(\omega)(\ddot{\Theta}_{\infty,n+k}(\Theta_{n+k,n}(f))| \\
&=
|\langle\Phi_{n+k,\infty}(\omega),\Theta_{n+k,n}(f)\rangle_m| \\
&\leq
\Vert\Phi_{n+k,\infty}(\omega)\Vert\,\Vert\Theta_{n+k,n}(f)\Vert \\
&\leq
\Vert\omega\Vert\,\Vert\Theta_{n+k,n}(f)\Vert
\longrightarrow \Vert\omega\Vert\,\Vert\Theta_{\infty,n}(f)\Vert
\end{align*}
as $k\to\infty$ by \cite[Proposition 4.16]{MawhinneyTodorov:DissertationesMath18}. It follows that $\mathrm{Ker}(q_\mathcal{S}) \subseteq \mathrm{Ker}(\ddot{\iota}(\omega))$. Hence there is a~unique $\iota(\omega) \in \mathcal{S}(\alpha^t,\beta)^*$ so that $\iota(\omega)\circ q_\mathcal{S} = \ddot{\iota}(\omega)$, and $\Vert\iota(\omega)\Vert \leq \Vert\omega\Vert$ holds.

Moreover, for any $\varepsilon>0$ there is an $m$ so that $\Vert \omega\Vert - \varepsilon/2 < \Vert\Phi_{m,\infty}(\omega)\Vert$. Also, there is a~$g \in \mathcal{Z}(A_m)$ such that $\Vert g\Vert \leq 1$ and $\Vert\Phi_{m,\infty}(\omega)\Vert - \varepsilon/2 < |\langle \Phi_{m,\infty}(\omega),g\rangle_m|$. Then we obtain that $\Vert\Theta_{\infty,m}(g)\Vert \leq \Vert g\Vert \leq 1$ and $\Vert\iota(\omega)\Vert \geq |\iota(\omega)(\Theta_{\infty,m}(g))| = |\langle\Phi_{m,\infty}(\omega),g\rangle_m| > \Vert\Phi_{m,\infty}(\omega)\Vert - \varepsilon/2 > \Vert\omega\Vert - \varepsilon$. Since $\varepsilon>0$ is arbitrary, we conclude that $\Vert\iota(\omega)\Vert = \Vert\omega\Vert$.

Since all the $\Phi_{n,\infty}$ are linear, so is $\omega \mapsto \iota(\omega)$ and thus we are done.
\end{proof}

We will clarify what r\^{o}le the Banach space $\mathcal{L}(\alpha^t,\beta)$ plays in the next section.

\subsection[Locally normal states on S(alpha\textasciicircum{}t,beta)]
{Locally normal states on $\boldsymbol{\mathcal{S}(\alpha^t,\beta)}$} 

We begin with a definition.

\begin{Definition} An $\omega \in \mathcal{S}(\alpha^t,\beta)^*$ is said to be \emph{locally normal}, if there is an $m \geq 0$ such that $\Theta^*_{\infty,n}(\omega) := \omega\circ\Theta_{\infty,n}$ falls into $\mathcal{Z}(A_n)_*$ for every $n \geq m$. The set of all locally normal states on~$\mathcal{S}(\alpha^t,\beta)$ is denoted by $S^\mathrm{ln}(\mathcal{S}(\alpha^t,\beta))$, whose topology is the weak$^*$ one.

A locally normal state on $\ddot{\mathcal{S}}(\alpha^t,\beta)$ is defined similarly with $\ddot{\Theta}_{\infty,n}$ in place of $\Theta_{\infty,n}$, and the set of those states is denoted by $S^\mathrm{ln}(\ddot{\mathcal{S}}(\alpha^t,\beta))$.
\end{Definition}

\begin{Remarks}
\quad
\begin{enumerate}\itemsep=0pt
\item[$1.$] If $\omega$ is locally normal, then $\Theta_{\infty,n}^*(\omega)$ falls into $\mathcal{Z}(A_n)_*$ for every $n \geq 0$ since all $\Theta_{n+1,n}$ are normal.

\item[$2.$] It is clear, from the above definition, that the canonical bijection $\omega \in S(\mathcal{S}(\alpha^t,\beta)) \mapsto \omega\circ q_\mathcal{S} \in S(\ddot{\mathcal{S}}(\alpha^t,\beta))$ between state spaces (see, e.g., \cite[Theorem 2.6]{MawhinneyTodorov:DissertationesMath18}) sends $S^\mathrm{ln}(\mathcal{S}(\alpha^t,\beta))$ onto $S^\mathrm{ln}(\ddot{\mathcal{S}}(\alpha^t,\beta))$. Thus, no difference between $\mathcal{S}(\alpha^t,\beta)$ and $\ddot{\mathcal{S}}(\alpha^t,\beta)$ occurs in the study of locally normal states.
 \end{enumerate}
\end{Remarks}

\begin{Lemma}\label{L3.9} The set of all locally normal elements of $\mathcal{S}(\alpha^t,\beta)^*$ is exactly $\iota(\mathcal{L}(\alpha^t,\beta))$ with the embedding $\iota\colon \mathcal{L}(\alpha^t,\beta) \hookrightarrow \mathcal{S}(\alpha^t,\beta)^*$ in Proposition $\ref{P3.6}$.
\end{Lemma}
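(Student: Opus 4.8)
The plan is to prove a two-way inclusion: every $\iota(\omega)$ with $\omega \in \mathcal{L}(\alpha^t,\beta)$ is locally normal, and conversely every locally normal functional on $\mathcal{S}(\alpha^t,\beta)$ arises this way. The first inclusion is essentially a matter of unwinding definitions. Given $\omega = (\omega_n) \in \mathcal{L}(\alpha^t,\beta)$, the defining identity in Proposition~\ref{P3.6} says $\iota(\omega)(\Theta_{\infty,n}(f)) = \langle \Phi_{n,\infty}(\omega),f\rangle_n = \langle \omega_n, f\rangle_n$ for all $f \in \mathcal{Z}(A_n)$. Hence $\Theta_{\infty,n}^*(\iota(\omega)) = \omega_n \in \mathcal{Z}(A_n)_*$ for every $n$, which is exactly the local normality condition (with $m = 0$). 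So $\iota(\mathcal{L}(\alpha^t,\beta)) \subseteq S^{\mathrm{ln}}$-type functionals — more precisely, into the locally normal elements of $\mathcal{S}(\alpha^t,\beta)^*$.

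For the reverse inclusion, start with a locally normal $\varphi \in \mathcal{S}(\alpha^t,\beta)^*$. By the first Remark after the definition of local normality, $\omega_n := \Theta_{\infty,n}^*(\varphi) \in \mathcal{Z}(A_n)_*$ for every $n \geq 0$. The key compatibility to check is that $(\omega_n)$ is a coherent sequence for the projective system \eqref{Eq3.5}, i.e., $\Phi_{n,n+1}(\omega_{n+1}) = \omega_n$. This follows from the relation $\Theta_{\infty,n+1}\circ\Theta_{n+1,n} = \Theta_{\infty,n}$: for $f \in \mathcal{Z}(A_n)$,
\[
\langle \Phi_{n,n+1}(\omega_{n+1}), f\rangle_n = \langle \omega_{n+1}, \Theta_{n+1,n}(f)\rangle_{n+1} = \varphi\big(\Theta_{\infty,n+1}(\Theta_{n+1,n}(f))\big) = \varphi(\Theta_{\infty,n}(f)) = \langle \omega_n, f\rangle_n.
\]
Next one must verify $\sup_n \Vert\omega_n\Vert < \infty$, so that $\omega := (\omega_n)$ genuinely lies in $\mathcal{L}(\alpha^t,\beta)$: since each $\Theta_{\infty,n}$ is UCP hence contractive, $\Vert\omega_n\Vert = \Vert\varphi\circ\Theta_{\infty,n}\Vert \leq \Vert\varphi\Vert$ for all $n$, so the sup is bounded by $\Vert\varphi\Vert$. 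Finally, $\iota(\omega)$ and $\varphi$ agree on each $\Theta_{\infty,n}(\mathcal{Z}(A_n))$ by construction, and since $\mathcal{S}(\alpha^t,\beta) = \bigcup_n \Theta_{\infty,n}(\mathcal{Z}(A_n))$ (Remark~\ref{R3.4}(2)), they agree everywhere: $\varphi = \iota(\omega)$.

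The argument is mostly bookkeeping, and I do not anticipate a genuine obstacle; the one place requiring a little care is confirming that local normality as stated (only "for every $n \geq m$" for some $m$) really does upgrade to "for every $n \geq 0$" — this is exactly the content of the cited Remark, and it uses that the connecting maps $\Theta_{n+1,n}$ are normal, so that $\Theta_{\infty,n}^*(\varphi) = \Theta_{n+1,n}^*(\Theta_{\infty,n+1}^*(\varphi)) = \Phi_{n,n+1}(\omega_{n+1})$ automatically lands in the predual once $\omega_{n+1}$ does. One should also note that the representation as $\iota(\omega)$ is unique because $\iota$ is injective (isometric) by Proposition~\ref{P3.6}, so the correspondence $\varphi \leftrightarrow \omega$ is a genuine bijective identification of the locally normal part of $\mathcal{S}(\alpha^t,\beta)^*$ with $\mathcal{L}(\alpha^t,\beta)$.
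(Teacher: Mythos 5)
Your proof is correct and follows essentially the same route as the paper's: both directions are handled by the same computations (coherence of $(\Theta_{\infty,n}^*(\varphi))$ from $\Theta_{\infty,n+1}\circ\Theta_{n+1,n}=\Theta_{\infty,n}$, boundedness from contractivity, agreement on $\bigcup_n\Theta_{\infty,n}(\mathcal{Z}(A_n))$, and the easy direction read off from the identity in Proposition~\ref{P3.6}). Your extra care in upgrading local normality from ``$n\geq m$'' to all $n\geq 0$ via normality of the $\Theta_{n+1,n}$ is exactly the content of the paper's preceding remark, so nothing is missing.
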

\begin{proof}
Let $\omega \in \mathcal{S}(\alpha^t,\beta)^*$ be locally normal. Then $\Theta_{\infty,n}^*(\omega) = \omega\circ\Theta_{\infty,n} \in \mathcal{Z}(A_n)_*$ by definition. Observe that
\[
\langle \Phi_{n,n+1}(\Theta_{\infty,n+1}^*(\omega)),f\rangle_n = \omega\circ\Theta_{\infty,n+1}(\Theta_{n+1,n}(f)) = \omega\circ\Theta_{\infty,n}(f) = \langle\Theta_{\infty,n}^*(\omega),f\rangle_n
\]
for all $f \in \mathcal{Z}(A_n)$. It follows that $\Phi_{n,n+1}(\Theta_{\infty,n+1}^*(\omega))=\Theta_{\infty,n}^*(\omega)$ holds for every $n \geq 0$. Clearly, $\Vert\Theta_{\infty,n}^*(\omega)\Vert \leq \Vert\omega\Vert$ holds for every $n\geq0$, and hence $\Theta_{\infty,\bullet}^*(\omega) := (\Theta_{\infty,n}^*(\omega)) \in \mathcal{L}(\alpha^t,\beta)$. Moreover, $\iota(\Theta_{\infty,\bullet}^*(\omega))(\Theta_{\infty,n}(f)) = \langle \Phi_{n,\infty}(\Theta_{\infty,\bullet}^*(\omega)),f\rangle_n = \omega(\Theta_{\infty,n}(f))$ holds for all $f \in \mathcal{Z}(A_n)$ and $n$, and thus $\omega = \iota(\Theta_{\infty,\bullet}^*(\omega)) \in \iota(\mathcal{L}(\alpha^t,\beta))$.

On the other hand, the identify in the statement of Proposition \ref{P3.6} shows that any element of $\iota(\mathcal{L}(\alpha^t,\beta))$ must be locally normal. Hence we are done.
\end{proof}

Here we recall a definition from \cite[Section 7]{Ueda:Preprint20}.

\begin{Definition} A positive function $\nu$ on $\mathfrak{Z}=\bigsqcup_{n\geq0}\mathfrak{Z}_n$ is said to be \emph{normalized} and \emph{$\kappa_{(\alpha^t,\beta)}$-harmonic}, if $\nu(1) = 1$ and
\[
\nu(z') = \sum_{z \in \mathfrak{Z}_{n+1}}\nu(z)\,\kappa_{(\alpha^t,\beta)}(z,z'), \qquad
z' \in \mathfrak{Z}_n
\]
holds for every $n \geq 0$, where $\kappa_{(\alpha^t,\beta)}$ is the link in Definition $\ref{D3.1}$. The convex set of all positive, normalized $\kappa_{(\alpha^t,\beta)}$-harmonic functions is denoted by $H_1^+(\kappa_{(\alpha^t,\beta)})$, whose topology is determined by pointwise convergence.
\end{Definition}

\begin{Theorem}\label{T3.11} The following assertions hold:
\begin{enumerate}\itemsep=0pt
\item[$1.$] The mapping $\nu \in H_1^+(\kappa_{(\alpha^t,\beta)}) \mapsto \nu_\bullet := (\nu_n) \in \mathcal{L}(\alpha^t,\beta)$ with $\nu_n := \nu\!\upharpoonright_{\mathfrak{Z}_n}$, the restriction of~$\nu$ to $\mathfrak{Z}_n$, for each $n$ is well defined, injective and affine.
\item[$2.$] $S^\mathrm{ln}(\mathcal{S}(\alpha^t,\beta)) = \big\{ \iota(\nu_\bullet);\, \nu \in H_1^+(\kappa_{(\alpha^t,\beta)})\big\}$.
\item[$3.$]
There are affine homeomorphisms
\[
\xymatrix{
K_\beta^\mathrm{ln}(\alpha^t) \ar@{->}[rr]^{(a)}
& & H_1^+(\kappa_{(\alpha^t,\beta)}) \ar@{->}[dl]^{(b)} \\
& S^\mathrm{ln}(\mathcal{S}(\alpha^t,\beta)) \ar@{->}[ul]^{(c)}
&
}
\]
such that any composition of these three maps along the arrows becomes the identity map. Here, map $(a)$ is the affine homeomorphism $\omega \mapsto \nu[\omega]$ in {\rm \cite[Proposition 7.10]{Ueda:Preprint20}}, map $(b)$ is the affine bijection $\nu \mapsto \iota(\nu_\bullet)$, and map $(c)$, denoted by $\omega \mapsto E_\bullet^*(\omega)$, is defined as follows. For any $\omega \in S^\mathrm{ln}(\mathcal{S}(\alpha^t,\beta))$, a unique $E_\bullet^*(\omega) \in K_\beta^\mathrm{ln}(\alpha^t)$ is defined in such a way that
\[
E_\bullet^*(\omega)\!\upharpoonright_{A_n} = \omega\circ\Theta_{\infty,n}\circ E_n
\]
for every $n$.
\end{enumerate}
\end{Theorem}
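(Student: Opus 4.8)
The plan is to run through the three parts in order, using Lemma~\ref{L3.9} and Proposition~\ref{P3.6} to transport everything to the Banach space $\mathcal{L}(\alpha^t,\beta)$, and to deduce the homeomorphism assertions of Part~3 formally once the triangle is known to commute. For Part~1, fix $\nu\in H_1^+(\kappa_{(\alpha^t,\beta)})$ and put $\nu_n:=\nu\!\upharpoonright_{\mathfrak{Z}_n}$. I would first check that $\nu_n\in\ell^1(\mathfrak{Z}_n)=\mathcal{Z}(A_n)_*$ for every $n$: since $[\kappa(z',z)]$ is a stochastic matrix (\cite[Lemma~7.9]{Ueda:Preprint20}), summing the harmonicity identity over $\mathfrak{Z}_n$ and interchanging the nonnegative sums gives $\sum_{z\in\mathfrak{Z}_n}\nu(z)=\sum_{z'\in\mathfrak{Z}_{n+1}}\nu(z')$, so induction from $\nu(1)=1$ yields $\Vert\nu_n\Vert_1=1$ for all $n$. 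Comparing harmonicity with \eqref{Eq3.7} shows $\Phi_{n,n+1}(\nu_{n+1})=\nu_n$, and $\sup_n\Vert\nu_n\Vert=1<\infty$, so $\nu_\bullet:=(\nu_n)\in\mathcal{L}(\alpha^t,\beta)$; injectivity and affinity are immediate, since $\mathfrak{Z}=\bigsqcup_n\mathfrak{Z}_n$ is a disjoint union and $\nu\mapsto\nu_n$ is restriction.

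For Part~2, recall from the proof of Lemma~\ref{L3.9} that the locally normal elements of $\mathcal{S}(\alpha^t,\beta)^*$ are exactly $\iota(\mathcal{L}(\alpha^t,\beta))$, with $\iota\big((\mu_n)\big)\circ\Theta_{\infty,n}=\mu_n$. If $\omega\in S^\mathrm{ln}(\mathcal{S}(\alpha^t,\beta))$, write $\omega=\iota\big((\mu_n)\big)$ with $\mu_n=\omega\circ\Theta_{\infty,n}$; each $\mu_n$ is positive as a composition of positive maps, and $\langle\mu_0,1\rangle_0=\omega(e)=1$, so the function $\nu$ on $\mathfrak{Z}$ determined by $\nu\!\upharpoonright_{\mathfrak{Z}_n}:=\mu_n$ is positive and normalized, and the relations $\Phi_{n,n+1}(\mu_{n+1})=\mu_n$ are, via \eqref{Eq3.7}, exactly $\kappa_{(\alpha^t,\beta)}$-harmonicity; hence $\nu\in H_1^+(\kappa_{(\alpha^t,\beta)})$ and $\omega=\iota(\nu_\bullet)$. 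Conversely, for $\nu\in H_1^+(\kappa_{(\alpha^t,\beta)})$, $\iota(\nu_\bullet)$ is locally normal by Lemma~\ref{L3.9} and Part~1, unital since $\iota(\nu_\bullet)(e)=\langle\nu_0,1\rangle_0=\nu(1)=1$, and positive: by the description of the operator system inductive limit recalled in the proof of the Proposition preceding Theorem~\ref{T3.11}, a positive element of $\mathcal{S}(\alpha^t,\beta)$ has the form $q_\mathcal{S}\big(\ddot{\Theta}_{\infty,n}(f)\big)$ with $\Theta_{m,n}(f)\geq0$ in the commutative $C^*$-algebra $\mathcal{Z}(A_m)$ for some $m\geq n$, and there $\iota(\nu_\bullet)$ takes the nonnegative value $\langle\nu_m,\Theta_{m,n}(f)\rangle_m$ because $\nu\geq0$.

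For Part~3 I would treat the three maps one at a time. For $(c)$: using \eqref{Eq2.2} in the form $E_{n+1}\circ E_n=E_{n+1}$ on $A_n$, together with $\Theta_{\infty,n}=\Theta_{\infty,n+1}\circ\Theta_{n+1,n}$ and $\Theta_{n+1,n}=E_{n+1}\!\upharpoonright_{\mathcal{Z}(A_n)}$, the functionals $a\mapsto\omega\big(\Theta_{\infty,n}(E_n(a))\big)$ on $A_n$ are compatible with $A_n\hookrightarrow A_{n+1}$, hence glue to a unital, contractive functional $E_\bullet^*(\omega)$ on $A$; by \eqref{Eq3.1} and the remark after it, its restriction to $A_n$ is the normal $(\alpha_n^t,\beta)$-KMS state associated with the state $\Theta_{\infty,n}^*(\omega)$ on $\mathcal{Z}(A_n)$ (so $E_\bullet^*(\omega)$ is locally normal and $(\alpha_n^t,\beta)$-KMS on each $A_n$), and it is then $(\alpha^t,\beta)$-KMS since the $\alpha^t$-analytic elements lying in the $A_n$ form a norm-dense $\alpha^t$-invariant $*$-subalgebra on which the KMS identity holds level by level. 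Thus $E_\bullet^*(\omega)\in K_\beta^\mathrm{ln}(\alpha^t)$, and $(c)$ is weak$^*$-continuous, being the evaluation of $\omega$ at fixed elements of $\mathcal{S}(\alpha^t,\beta)$. Map~$(a)$ is an affine homeomorphism by \cite[Proposition~7.10]{Ueda:Preprint20}, with $\nu[\omega]\!\upharpoonright_{\mathfrak{Z}_n}=\omega\!\upharpoonright_{\mathcal{Z}(A_n)}$ viewed in $\ell^1(\mathfrak{Z}_n)$. Map~$(b)$ is affine and, by Parts~1 and~2 and injectivity of $\iota$, a bijection onto $S^\mathrm{ln}(\mathcal{S}(\alpha^t,\beta))$; its weak$^*$-continuity I would get by a Scheff\'e-type uniform-integrability argument, using $\Vert\nu^{(i)}_n\Vert_1=\Vert\nu_n\Vert_1=1$ to pass pointwise convergence $\nu^{(i)}\to\nu$ to $\langle\nu^{(i)}_n,f\rangle_n\to\langle\nu_n,f\rangle_n$ for every $f\in\ell^\infty(\mathfrak{Z}_n)$, together with $\mathcal{S}(\alpha^t,\beta)=\bigcup_n\Theta_{\infty,n}(\mathcal{Z}(A_n))$.

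Finally, to close the triangle I would compute $(a)\circ(c)\circ(b)=\mathrm{id}$ on $H_1^+(\kappa_{(\alpha^t,\beta)})$ directly: for $z\in\mathfrak{Z}_n$, using $E_n(z)=z$ and Proposition~\ref{P3.6},
\[
E_\bullet^*(\iota(\nu_\bullet))(z)=\iota(\nu_\bullet)\big(\Theta_{\infty,n}(z)\big)=\langle\Phi_{n,\infty}(\nu_\bullet),z\rangle_n=\nu_n(z)=\nu(z),
\]
so $\nu\big[E_\bullet^*(\iota(\nu_\bullet))\big]=\nu$. Since all three maps are bijections, this forces $(c)\circ(b)=(a)^{-1}$, hence $(c)=(a)^{-1}\circ(b)^{-1}$ and therefore $(b)^{-1}=(a)\circ(c)$ is continuous; so $(b)$ is a homeomorphism, whence so is $(c)=(a)^{-1}\circ(b)^{-1}$, and the remaining two compositions along the triangle equal the identity by cancellation. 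I expect the main obstacle to be the honest verification that map~$(c)$ really lands in $K_\beta^\mathrm{ln}(\alpha^t)$ — that the glued level-wise KMS states constitute a genuine $(\alpha^t,\beta)$-KMS state on all of $A$ — together with the fact that none of the three convex sets need be weak$^*$-compact (the mass of $\nu_n$ can escape to infinity when some $A_n$ is infinite dimensional), so that continuity of the inverse maps must be extracted from the identity $(a)\circ(c)\circ(b)=\mathrm{id}$ rather than taken for granted.
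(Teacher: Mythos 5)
Your proposal follows essentially the paper's own route: Part 1 via harmonicity versus \eqref{Eq3.7}, Part 2 via Lemma \ref{L3.9} and the explicit description of positives, and Part 3 by defining map $(c)$ level by level, checking compatibility with \eqref{Eq2.2}, getting continuity of $(b)$ by a Scheff\'e-type argument and of $(c)$ by evaluation plus norm-density of $\bigcup_n A_n$, and then closing the triangle by one direct computation (you traverse it starting at $H_1^+(\kappa_{(\alpha^t,\beta)})$, the paper starts at $K_\beta^{\mathrm{ln}}(\alpha^t)$) and deducing bijectivity and the homeomorphism property of $(b)$, $(c)$ formally from the commutativity, exactly as the paper does.

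The one step whose justification as written would fail is the gluing for map $(c)$: you assert that the $\alpha^t$-analytic elements lying in the $A_n$ form a norm-dense $*$-subalgebra of $A$. This is false in general. Each $\alpha_n^t$ is only assumed continuous in the $u$-topology, so $(A_n)_{\alpha_n^t}^\infty$ is merely $\sigma$-weakly dense in $A_n$; since every analytic element has a norm-continuous orbit map, the norm closure of the analytic elements is contained in the subalgebra of norm-continuity elements, which is a proper subalgebra of $A_n$ whenever $A_n$ is infinite dimensional and $\alpha_n^t$ is not strongly continuous (e.g., $zA_n=B(\mathcal{H}_z)$ with $\alpha_z^t=\mathrm{Ad}\,h^{{\rm i}t}$ for unbounded $h$, which is exactly the situation in the quantum group examples). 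The paper sidesteps this by invoking the proof of \cite[Proposition~7.4]{Ueda:Preprint20}; the correct mechanism is to use the two-variable (strip-function) form of the KMS condition: it holds for $a,b\in\bigcup_n A_n$ because each $E_n^*(\omega)=\omega\circ\Theta_{\infty,n}\circ E_n$ is a normal $(\alpha_n^t,\beta)$-KMS state on $A_n$, and it passes to all $a,b\in A$ by norm approximation and Phragm\'en--Lindel\"of, using that $\bigcup_n A_n$ itself (not its analytic part) is norm-dense. With that repair, or simply the citation, your argument coincides with the paper's. A minor further point: in Part 2 the cone you describe is that of $\ddot{\mathcal{S}}(\alpha^t,\beta)$; positivity of $\iota(\nu_\bullet)$ on $\mathcal{S}^+(\alpha^t,\beta)$ requires the extra Archimedean step $s+\delta e$, $\delta>0$, which is immediate but should be recorded.
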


\begin{proof} (1) That $\nu_\bullet \in \mathcal{L}(\alpha^t,\beta)$ immediately follows from the definition of $\kappa_{(\alpha^t,\beta)}$-harmonicity and \eqref{Eq3.7}. Being injective and affine is trivial.

(2) By Lemma \ref{L3.9}, any element of $S^\mathrm{ln}(\mathcal{S}(\alpha^t,\beta))$ is of the form $\iota(\omega)$ with $\omega \in \mathcal{L}(\alpha^t,\beta)$. Then, $\iota(\omega)\circ\Theta_{\infty,n}(f) = \langle \Phi_{n,\infty}(\omega), f\rangle_n$ holds for every $f \in \mathcal{Z}(A_n)$. Since $\iota(\omega)\circ\Theta_{\infty,n}$ is a state, $\Phi_{n,\infty}(\omega)$ is a positive function on $\mathfrak{Z}_n$ and $\sum_{z \in \mathfrak{Z}_n} \Phi_{n,\infty}(\omega)(z) = \Vert\Phi_{n,\infty}(\omega)\Vert = 1$ holds. We define a function $\nu \colon \mathfrak{Z} \to [0,1]$ by $\nu\!\upharpoonright_{\mathfrak{Z}_n} := \Phi_{n,\infty}(\omega)\!\upharpoonright_{\mathfrak{Z}_n}$ for every $n \geq 0$. By the definition of $\mathcal{L}(\alpha^t,\beta)$ together with~\eqref{Eq3.7} one easily sees that $\nu$ falls in $H_1^+(\kappa_{(\alpha^t,\beta)})$ and moreover $\iota(\nu_\bullet) = \omega$ by the construction of~$\nu$. Hence we have shown that $S^\mathrm{ln}(\mathcal{S}(\alpha^t,\beta)) \subseteq \big\{ \iota(\nu_\bullet);\, \nu \in H_1^+(\kappa_{(\alpha^t,\beta)})\big\}$.

On the other hand, we start with an arbitrary $\nu \in H_1^+(\kappa_{(\alpha^t,\beta)})$. By item (1) $\nu_\bullet$ falls in $\mathcal{L}(\alpha^t,\beta)$. Then, for each $n$, $\iota(\nu_\bullet)(\Theta_{\infty,n}(f)) = \langle \nu_n,f\rangle_n = \sum_{z\in\mathfrak{Z}_n} \nu_n(z) f(z)$ is always positive if $f \in \mathcal{Z}(A_n)$, and equals $1$ when $f \equiv 1$. Thus, $\iota(\nu_\bullet)$ must be a state on $\mathcal{S}(\alpha^t,\beta)$. Since $\iota(\nu_\bullet)$ has already been known to be locally normal by Lemma \ref{L3.9}, we have $\iota(\nu_\bullet) \in S^\mathrm{ln}(\mathcal{S}(\alpha^t,\beta))$. Hence we have confirmed item (2).

(3) Let us first consider map $(c)$. Denote $E_n^*(\omega) := \omega\circ\Theta_{\infty,n}\circ E_n$, which clearly defines a~normal state on $A_n$. Since $\Theta_{n+1,n}$ is the restriction of $E_{n+1}$ to $\mathcal{Z}(A_n)$, we have, for any $a \in A_n$,
\begin{align*}
E_{n+1}^*(\omega)(a)
&= \omega\circ\Theta_{\infty,n+1}\circ E_{n+1}(a) \\
&= \omega\circ\Theta_{\infty,n+1}\circ E_{n+1}\circ E_n(a) \quad \text{(by \ref{Eq2.2})} \\
&= \omega\circ\Theta_{\infty,n+1}\circ \Theta_{n+1,n}\circ E_n(a) \\
&= \omega\circ\Theta_{\infty,n}\circ E_n(a) = E_n^*(\omega)(a).
\end{align*}
Thus, by the proof of \cite[Proposition 7.4]{Ueda:Preprint20} the sequence $(E_n^*(\omega))$ defines the desired element $E_\bullet^*(\omega) \in K_\beta^\mathrm{ln}(\alpha^t)$. Hence we have seen that map~$(c)$ is well defined. The injectivity of map~$(c)$ is clear from the construction, since $\bigcup_n A_n$ is norm-dense in $A$. Also, it is easy to see that $\omega \mapsto E_\bullet^*(\omega)$ is affine.

Assume that $\omega_\lambda \to \omega$ in $S^\mathrm{ln}(\mathcal{S}(\alpha^t,\beta))$ (with respect to the weak$^*$ topology). For any $a \in A_n$ we have $E_\bullet^*(\omega_\lambda)(a) = \omega_\lambda(\Theta_{\infty,n}(E_n(a))) \to \omega(\Theta_{\infty,n}(E_n(a))) = E_\bullet^*(\omega)(a)$. Since $\bigcup_n A_n$ is norm-dense in $A$, we conclude that $E_\bullet^*(\omega_\lambda) \to E_\bullet^*(\omega)$ in $K_\beta^\mathrm{ln}(\alpha^t)$ (with respect to the weak$^*$ topology). Thus, map~$(c)$ is continuous.

We then prove that map~$(b)$ is continuous. Assume that $\nu_\lambda \to \nu$ in $H_1^+(\kappa_{(\alpha^t,\beta)})$ (pointwisely). It is a standard task (see the proofs of \cite[Theorem 7.8 and Proposition 7.10]{Ueda:Preprint20}) to show that
\begin{align*}
\iota((\nu_\lambda)_\bullet)(\Theta_{\infty,n}(f))
= \sum_{z \in \mathfrak{Z}_n} \nu_\lambda(z)f(z)
\to
\sum_{z \in \mathfrak{Z}_n} \nu(z)f(z)
= \iota(\nu_\bullet)(\Theta_{\infty,n}(f))
\end{align*}
for any $f \in \mathcal{Z}(A_n)$ and $n$. It follows that $\iota((\nu_\lambda)_\bullet) \to \iota(\nu_\bullet)$ in $S^\mathrm{ln}(\mathcal{S}(\alpha^t,\beta))$ (with respect to the weak$^*$ topology).

We finally confirm that the diagram commutes in the sense of the statement. Choose an arbitrary $\omega \in K_\beta^\mathrm{ln}(\alpha^t)$. For any $a \in A_n$ we have
\[
E_\bullet^*(\iota(\nu[\omega]_\bullet))(a) = \iota(\nu[\omega]_\bullet)\circ\Theta_{\infty,n}(E_n(a)) = \langle \nu[\omega]_n,E_n(a)\rangle_n = \omega(a)
\]
by \eqref{Eq3.1}, since $\nu[\omega]_n(z) = \omega(z)$ for every $z \in \mathfrak{Z}_n$. Since $\bigcup_n A_n$ is norm-dense in $A$, it follows that $E_\bullet^*(\iota(\nu[\omega]_\bullet)) = \omega$ holds, that is, the diagram commutes. This fact together with the bijectivity of both maps $(a)$, $(b)$ implies that map~$(c)$ is bijective too. The commutativity also shows that maps $(b)$, $(c)$ are homeomorphisms.
\end{proof}

Thanks to Remark \ref{R3.2}, the above assertion (3) can be regarded as a KMS state analog of a~well-known fact on tracial states on AF-algebras with dimension groups (see, e.g., \cite[Chapter~2, Section 3.2]{VershikKerov:JSovietMath87}). Actually, the representation (operator) system $\mathcal{S}(\alpha^t,\beta)$ (or $\ddot{\mathcal{S}}(\alpha^t,\beta)$) with inductive structure $\Theta_{\infty,n}$ (resp.\ $\ddot{\Theta}_{\infty,n}$) has all information on the locally normal $(\alpha^t,\beta)$-KMS states $K_\beta^\mathrm{ln}(\alpha^t)$.

\subsection{Left/right multiplicative structure} 

Throughout this section, we assume that the inductive sequence $A_n$ in question has the following structure, called a \emph{left multiplicative structure}: For each pair $m, n \geq 0$, there is a normal injective $*$-homomorphism $\iota_{m,n} \colon A_m\,\bar{\otimes}\,A_n \to A_{m+n}$ such that
\begin{itemize}\itemsep=0pt
\item[$(m1)$] $\iota_{0,n}$ and $\iota_{m,0}$ are the natural maps $\mathbb{C}\otimes A_n \cong A_n$ and $A_m\otimes\mathbb{C} \cong A_m$, respectively;
\item[$(m2)$] for each triple $\ell,m,n\geq0$,
\[
\xymatrix{
A_\ell\,\bar{\otimes}\,A_m\,\bar{\otimes}\,A_n \ar@{->}[r]^{\iota_{\ell,m}\,\bar{\otimes}\,\mathrm{id}} \ar@{->}[d]_{\mathrm{id}\,\bar{\otimes}\,\iota_{m,n}} & A_{\ell+m}\,\bar{\otimes}\,A_n \ar@{->}[d]^{\iota_{\ell+m,n}} \\
A_\ell\,\bar{\otimes}\,A_{m+n} \ar@{->}[r]^{\iota_{\ell,m+n}} \ar@{}[ur]|{\circlearrowright} & A_{\ell+m+n}
}
\]
and this commutative diagram defines $\iota_{\ell,m,n}\colon A_\ell\,\bar{\otimes}\,A_m\,\bar{\otimes}\,A_n \to A_{\ell+m+n}$ naturally;
\item[$(m3)$] $\iota_{m,n}\circ(\alpha_m^t\,\bar{\otimes}\,\alpha_n^t) = \alpha_{m+n}^t\circ\iota_{m,n}$ holds for every pair $m$, $n$;
\item[$(L)$] for each $n \geq 0$,
\[
\xymatrix{
A_n \ar@{^{(}-_{>}}[rr] \ar@{->}[dr]_{a\mapsto a\otimes1}
&
& A_{n+1} \\
& A_n\,\bar{\otimes}\,A_1 ,\ar@{->}[ur]_{\iota_{n,1}} \ar@{}[u]|{\circlearrowleft}
&
}
\]
where the horizontal arrow is the natural inclusion.
\end{itemize}
We call an inductive sequence $A_n$ with left multiplicative structure $\iota_{m,n}$ a \emph{left multiplicative inductive sequence}. The notion of right multiplicative structure is defined analogously with changing $(L)$ into
\begin{itemize}\itemsep=0pt
\item[$(R)$] For each $n \geq 0$,
\[
\xymatrix{
A_n \ar@{^{(}-_{>}}[rr] \ar@{->}[dr]_{a\mapsto 1\otimes a}
&
& A_{n+1} \\
& A_1\,\bar{\otimes}\,A_n, \ar@{->}[ur]_{\iota_{1,n}} \ar@{}[u]|{\circlearrowleft}
&
}
\]
where the horizontal arrow is the natural inclusion.
\end{itemize}
We remark that the discussion below also works for right multiplicative structures with trivial modifications.

Okada's abstract formulation of Littlewood--Richardson rings \cite{Okada:JpnProc94} motivated us to introduce this structure, but item $(L)$ or $(R)$ is a tricky part and formulated to hold in the quantum group setup; see Section \ref{S5.4}.

We will prove that the above left multiplicative structure makes the algebraic direct sum $\Sigma(\alpha^t,\beta) := \sum_{n\geq0} \mathcal{Z}(A_n)$ a unital algebra over $\mathbb{C}$, and moreover, both $\ddot{\mathcal{S}}(\alpha^t,\beta)$ and $\mathcal{S}(\alpha^t,\beta)$ left $\Sigma(\alpha^t,\beta)$-modules.

We start with a technical lemma, which follows from the uniqueness of $\tau_z = \tau^{(\alpha^t_z,\beta)}$.

\begin{Lemma}\label{L3.12} For each triple $\ell, m, n \geq 0$ and any $z_1 \in \mathfrak{Z}_\ell$, $z_2 \in \mathfrak{Z}_m$, $z_3 \in \mathfrak{Z}_n$, $z \in \mathfrak{Z}_{\ell+m+n}$, $z' \in \mathfrak{Z}_{\ell+m}$, $z'' \in \mathfrak{Z}_{m+n}$ we have the following:
\begin{itemize}\itemsep=0pt
\item[$1.$] $\tau_{z'}(x)\,\tau_z(z\,\iota_{\ell+m,n}(z'\otimes z_3)) = \tau_z(z\,\iota_{\ell+m,n}(x\otimes z_3))$ for all $x \in z'A_{\ell+m}$.
\item[$2.$] $\tau_z(z\,\iota_{\ell,m+n}(z_1\otimes z''))\,\tau_{z''}(y) = \tau_z(z\,\iota_{\ell,m+n}(z_1\otimes y))$ for all $y \in z'' A_{m+n}$.
\end{itemize}
\end{Lemma}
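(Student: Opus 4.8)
The plan is to prove item~1 by identifying the functional $x\mapsto\tau_z\big(z\,\iota_{\ell+m,n}(x\otimes z_3)\big)$ on $z'A_{\ell+m}=B(\mathcal{H}_{z'})$ with a scalar multiple of $\tau_{z'}$; by the standing uniqueness assumption on KMS states this will follow once the functional is shown to be a normal positive $(\alpha_{z'}^t,\beta)$-KMS functional. Item~2 is handled identically, with $\iota_{\ell,m+n}(z_1\otimes\,\cdot\,)$ in place of $\iota_{\ell+m,n}(\,\cdot\,\otimes z_3)$ and using that $z_1\in\mathcal{Z}(A_\ell)$ is $\alpha_\ell^t$-fixed, so I discuss only item~1.

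First I would put $\phi(x):=\tau_z\big(z\,\iota_{\ell+m,n}(x\otimes z_3)\big)$ for $x\in z'A_{\ell+m}$ and note that $\phi$ is a normal positive linear functional with $\phi(z')=\tau_z\big(z\,\iota_{\ell+m,n}(z'\otimes z_3)\big)=\Vert\phi\Vert$. Normality is clear, $\phi$ being a composite of normal maps; positivity holds because $\iota_{\ell+m,n}$ is a $*$-homomorphism, $z_3$ is a projection (so $x\otimes z_3\geq0$ for $x\geq0$), $z$ is a positive element of $\mathcal{Z}(A_{\ell+m+n})$, and $\tau_z$ is positive.

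Next I would verify that $\phi$ is $(\alpha_{z'}^t,\beta)$-KMS, where $\alpha_{z'}^t$ is the restriction of $\alpha_{\ell+m}^t$ to $z'A_{\ell+m}$. The essential input is property $(m3)$ combined with $z_3$ being $\alpha_n^t$-fixed and $z$ being $\alpha_{\ell+m+n}^t$-fixed: these yield $z\,\iota_{\ell+m,n}\big(\alpha_{\ell+m}^t(x)\otimes z_3\big)=\alpha_{\ell+m+n}^t\big(z\,\iota_{\ell+m,n}(x\otimes z_3)\big)$, whence $\alpha_{z'}^t$-invariance of $\phi$ follows from $\alpha_z^t$-invariance of $\tau_z$, and whence one also reads off that $X:=z\,\iota_{\ell+m,n}(x\otimes z_3)$ is $\alpha_z^t$-analytic whenever $x$ is $\alpha_{z'}^t$-analytic, with $\alpha_z^{{\rm i}\beta}(X)=z\,\iota_{\ell+m,n}\big(\alpha_{z'}^{{\rm i}\beta}(x)\otimes z_3\big)$. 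Since $\iota_{\ell+m,n}$ is multiplicative, $z_3 z_3=z_3$, and $z$ is central, writing $Y:=z\,\iota_{\ell+m,n}(y\otimes z_3)$ one then computes, for $\alpha_{z'}^t$-analytic $x$ and arbitrary $y\in z'A_{\ell+m}$,
\[
\phi(xy)=\tau_z(XY)=\tau_z\big(Y\,\alpha_z^{{\rm i}\beta}(X)\big)=\phi\big(y\,\alpha_{z'}^{{\rm i}\beta}(x)\big),
\]
the middle equality being the KMS condition for $\tau_z$. Thus $\phi$ is a normal positive $(\alpha_{z'}^t,\beta)$-KMS functional; normalizing it when $\phi\neq0$ and invoking uniqueness of $\tau_{z'}$ gives $\phi=\phi(z')\,\tau_{z'}$, i.e.\ item~1 (the case $\phi=0$ being trivial since then both sides vanish).

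The hard part will be the KMS verification for $\phi$, and within it the analyticity transfer from $x$ to $X=z\,\iota_{\ell+m,n}(x\otimes z_3)$ together with the resulting formula for $\alpha_z^{{\rm i}\beta}(X)$; this is exactly the point at which property $(m3)$ and the intertwining of the flows with the maps $\iota_{\bullet,\bullet}$ are genuinely used. Positivity, normality and $\alpha_{z'}^t$-invariance of $\phi$ are routine by comparison.
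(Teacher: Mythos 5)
Your proposal is correct and follows essentially the route the paper intends: the paper's proof simply invokes the argument of Lemma~7.2 of the previous paper, i.e., one checks that $x\mapsto\tau_z\big(z\,\iota_{\ell+m,n}(x\otimes z_3)\big)$ is a normal positive $(\alpha_{z'}^t,\beta)$-KMS functional (with $(m3)$ supplying the flow compatibility, exactly as you use it) and then appeals to the uniqueness of the normal KMS state on $z'A_{\ell+m}$, and symmetrically for item~2. Your expansion of the analyticity transfer and the multiplicativity computation $XY=z\,\iota_{\ell+m,n}(xy\otimes z_3)$ fills in precisely the details the paper leaves implicit.
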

\begin{proof}
These can be proved in the same way as in \cite[Lemma 7.2]{Ueda:Preprint20} using the uniqueness of normal $(\alpha_{z'}^t,\beta)$- and $(\alpha_{z''}^t,\beta)$-KMS states on $z' A_{\ell+m}$ and $z'' A_{m+n}$, respectively, where requirement $(m3)$ is important.
\end{proof}

\begin{Corollary}\label{C3.13} For each triple $\ell,m,n \geq 0$ we have\vspace{-1ex}
\begin{align*}
E_{\ell+m+n}\circ\iota_{\ell+m,n}\circ(E_{\ell+m}\,\bar{\otimes}\,\mathrm{id})\circ(\iota_{\ell,m}\,\bar{\otimes}\,\mathrm{id})
&= E_{\ell+m+n}\circ\iota_{\ell,m,n} \\
&=
E_{\ell+m+n}\circ\iota_{\ell,m+n}\circ(\mathrm{id}\,\bar{\otimes}\,E_{m+n})\circ(\mathrm{id}\,\bar{\otimes}\,\iota_{m,n})
\end{align*}
holds on $\mathcal{Z}(A_\ell)\,\bar{\otimes}\,\mathcal{Z}(A_m)\,\bar{\otimes}\,\mathcal{Z}(A_n)\,(\subset A_\ell\,\bar{\otimes}\,A_m\,\bar{\otimes}\,A_n)$.
\end{Corollary}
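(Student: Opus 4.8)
The plan is to check both equalities on the elementary tensors $z_1\otimes z_2\otimes z_3$ with $z_1 \in \mathfrak{Z}_\ell$, $z_2 \in \mathfrak{Z}_m$, $z_3 \in \mathfrak{Z}_n$. Since the linear span of such tensors is weak$^*$-dense in $\mathcal{Z}(A_\ell)\,\bar{\otimes}\,\mathcal{Z}(A_m)\,\bar{\otimes}\,\mathcal{Z}(A_n)$ and every map occurring in the two compositions is weak$^*$-continuous — $E_k$ is a normal conditional expectation, each $\iota_{m,n}$ a normal $*$-homomorphism, and ampliations $(\,\cdot\,)\,\bar{\otimes}\,\mathrm{id}$ and $\mathrm{id}\,\bar{\otimes}\,(\,\cdot\,)$ preserve normality — this will suffice. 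By the definition of $\iota_{\ell,m,n}$ through the commutative diagram in $(m2)$, the middle term, applied to $z_1\otimes z_2\otimes z_3$, is $E_{\ell+m+n}\big(\iota_{\ell+m,n}(\iota_{\ell,m}(z_1\otimes z_2)\otimes z_3)\big)$; writing $p := \iota_{\ell,m}(z_1\otimes z_2) \in A_{\ell+m}$, which is a projection but \emph{not} necessarily a central one, the first equality reduces to the identity $E_{\ell+m+n}\big(\iota_{\ell+m,n}(E_{\ell+m}(p)\otimes z_3)\big) = E_{\ell+m+n}\big(\iota_{\ell+m,n}(p\otimes z_3)\big)$.

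To prove this I would expand $E_{\ell+m}(p) = \sum_{z'\in\mathfrak{Z}_{\ell+m}}\tau_{z'}(z'p)\,z'$ and $E_{\ell+m+n}(\,\cdot\,) = \sum_{z\in\mathfrak{Z}_{\ell+m+n}}\tau_z(z\,\cdot\,)\,z$ from their definitions in Section~\ref{S2}, and use normality to interchange sums, obtaining
\[
E_{\ell+m+n}\big(\iota_{\ell+m,n}(E_{\ell+m}(p)\otimes z_3)\big) = \sum_{z}\bigg(\sum_{z'}\tau_{z'}(z'p)\,\tau_z\big(z\,\iota_{\ell+m,n}(z'\otimes z_3)\big)\bigg)z.
\]
Lemma~\ref{L3.12}(1), applied with $x = z'p \in z'A_{\ell+m}$ (note $z'p = pz' \in z'A_{\ell+m}$ since $z'$ is central), rewrites each inner summand as $\tau_z\big(z\,\iota_{\ell+m,n}(z'p\otimes z_3)\big)$. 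Summing over $z'$, using $\sum_{z'\in\mathfrak{Z}_{\ell+m}}z'=1$ and the linearity/normality of $\iota_{\ell+m,n}$, collapses the inner sum to $\tau_z\big(z\,\iota_{\ell+m,n}(p\otimes z_3)\big)$, so the whole expression becomes $\sum_z \tau_z\big(z\,\iota_{\ell+m,n}(p\otimes z_3)\big)z = E_{\ell+m+n}\big(\iota_{\ell+m,n}(p\otimes z_3)\big)$, i.e.\ $E_{\ell+m+n}\circ\iota_{\ell,m,n}$ evaluated at $z_1\otimes z_2\otimes z_3$. The second equality follows symmetrically, using instead the description $\iota_{\ell,m,n}=\iota_{\ell,m+n}\circ(\mathrm{id}\,\bar{\otimes}\,\iota_{m,n})$ from $(m2)$ and Lemma~\ref{L3.12}(2) in place of Lemma~\ref{L3.12}(1).

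The computation is really just bookkeeping once the setup is fixed; the point that needs care — and the reason Lemma~\ref{L3.12} is phrased as it is — is that $\iota_{\ell,m}(z_1\otimes z_2)$ sits in $A_{\ell+m}$ but need not be central there, so one cannot simply extract the scalars $\tau_{z'}(z'p)$ as one would for an element of $\mathcal{Z}(A_{\ell+m})$. Lemma~\ref{L3.12}(1) is precisely the substitute that allows replacing the central $z'$ by the non-central element $z'p$ inside $\iota_{\ell+m,n}$ at the cost of the scalar $\tau_{z'}(z'p)$. The only other (routine) issue is justifying the reduction to elementary tensors of minimal central projections, which is immediate from the weak$^*$-density of their span and the normality of all maps involved.
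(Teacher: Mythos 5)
Your proposal is correct and follows essentially the same route as the paper: reduce to elementary tensors $z_1\otimes z_2\otimes z_3$ of minimal central projections by normality, expand the conditional expectations as sums over $\mathfrak{Z}_{\ell+m}$ and $\mathfrak{Z}_{\ell+m+n}$, apply Lemma~\ref{L3.12}(1) with $x=z'\,\iota_{\ell,m}(z_1\otimes z_2)$ to absorb the scalar $\tau_{z'}(z'p)$, collapse the sum over $z'$ and invoke $(m2)$, and argue symmetrically with Lemma~\ref{L3.12}(2) for the other equality. Your remark that $\iota_{\ell,m}(z_1\otimes z_2)$ need not be central, which is exactly what Lemma~\ref{L3.12} compensates for, matches the role the lemma plays in the paper's argument.
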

\begin{proof}
It suffices to confirm the desired identity against $z_1\otimes z_2 \otimes z_3$ with $z_1 \in \mathfrak{Z}_\ell$, $z_2 \in \mathfrak{Z}_m$, $z_3 \in \mathfrak{Z}_n$, because all the involved maps are normal.

We have\vspace{-1ex}
\begin{align*}
&\big(E_{\ell+m+n}\circ\iota_{\ell+m,n}\circ(E_{\ell+m}\,\bar{\otimes}\,\mathrm{id}) \circ(\iota_{\ell,m}\,\bar{\otimes}\,\mathrm{id})\big)(z_1\otimes z_2\otimes z_3) \\
&\qquad=
\sum_{z\in\mathfrak{Z}_{\ell+m+n}}\sum_{z' \in \mathfrak{Z}_{\ell+m}} \tau_{z'}(z'\,\iota_{\ell,m}(z_1\otimes z_2))\,\tau_z(z\,\iota_{\ell+m,n}(z'\otimes z_3))\,z \\
&\qquad=
\sum_{z\in\mathfrak{Z}_{\ell+m+n}}\sum_{z' \in \mathfrak{Z}_{\ell+m}} \tau_z(z\,\iota_{\ell+m,n}((z'\,\iota_{\ell,m}(z_1\otimes z_2))\otimes z_3))\,z \\
&\qquad=
\sum_{z\in\mathfrak{Z}_{\ell+m+n}} \tau_z(z\,\iota_{\ell,m,n}(z_1\otimes z_2\otimes z_3))\,z \\
&\qquad=
\sum_{z\in\mathfrak{Z}_{\ell+m+n}}\sum_{z'' \in \mathfrak{Z}_{m+n}} \tau_z(z\,\iota_{\ell,m+n}(z_1\otimes(z''\,\iota_{m,n}(z_2\otimes z_3))))\,z \\
&\qquad=
\sum_{z\in\mathfrak{Z}_{\ell+m+n}}\sum_{z'' \in \mathfrak{Z}_{m+n}} \tau_z(z\,\iota_{\ell,m+n}(z_1\otimes z''))\,\tau_{z''}(z''\,\iota_{m,n}(z_2\otimes z_3))))\,z \\
&\qquad=
\big(E_{\ell+m+n}\circ\iota_{\ell,m+n}\circ(\mathrm{id}\,\bar{\otimes}\,E_{m+n})\circ(\mathrm{id}\,\bar{\otimes}\,\iota_{m,n})\big)(z_1\otimes z_2\otimes z_3),
\end{align*}
where the second equality follows from Lemma \ref{L3.12}(1) with $x=z'\,\iota_{\ell,m}(z_1\otimes z_2)$, the third from~$(m2)$, and the fifth from Lemma \ref{L3.12}(2) with $y = z''\,\iota_{m,n}(z_2\otimes z_3)$. Note that the fourth line is $(E_z\circ\iota_{\ell,m,n})(z_1\otimes z_2\otimes z_3)$. Hence we are done.
\end{proof}

The corollary implies the next theorem. Remark that items (2), (3) below are naturally expected from item (1) and Remarks \ref{R3.4}(2). In what follows, the topology on $\Sigma(\alpha^t,\beta)$ is the relative one induced, via the natural embedding, from the product topology on $\prod_{n\geq0} \mathcal{Z}(A_n)$ of the $\sigma$-weak topology on each $\mathcal{Z}(A_n)$.

\begin{Theorem}\label{T3.14} With the left multiplicative structure $\iota_{m,n}$ we have the following{\rm:}
\begin{itemize}\itemsep=0pt
\item[$1.$] $\Sigma(\alpha^t,\beta) = \sum_{n\geq0}\mathcal{Z}(A_n)$ is a unital algebra with multiplication
\[
\sum_n f_n \cdot \sum_n g_n := \sum_n \bigg(\sum_{k+\ell=n} E_n(\iota_{k,\ell}(f_k\otimes g_\ell))\bigg),
\]
and its unit is $1_0$. Here, the unit of $\mathcal{Z}(A_n)$ is denoted by $1_n$ when it is considered as an element of $\Sigma(\alpha^t,\beta)$. Moreover, the multiplication is separately continuous.
\item[$2.$] The mapping
\begin{gather}
(f,\ddot{\Theta}_{\infty,n}(g)) \in \mathcal{Z}(A_m)\times\ddot{\mathcal{S}}(\alpha^t,\beta) \nonumber
\\ \qquad
{}\mapsto
f\cdot\ddot{\Theta}_{\infty,n}(g) := \ddot{\Theta}_{\infty,m+n}(E_{m+n}(\iota_{m,n}(f\otimes g))) \in \ddot{\mathcal{S}}(\alpha^t,\beta)\label{Eq3.9}
\end{gather}
is well defined and gives a left $\Sigma(\alpha^t,\beta)$-module structure on $\ddot{\mathcal{S}}(\alpha^t,\beta)$. Moreover, repla\-cing~$\ddot{\Theta}_{\infty,n}$ with $\Theta_{\infty,n}$ in \eqref{Eq3.9} still works for $\mathcal{S}(\alpha^t,\beta)$. Furthermore,
\[
f\cdot\ddot{e} = \ddot{\Theta}_{\infty,m}(f), \qquad f\cdot e = \Theta_{\infty,m}(f)
\]
hold for every $m \geq 0$ and $f \in \mathcal{Z}(A_m)$.
\item[$3.$] For every $n \geq 0$ and $f \in \mathcal{Z}(A_n) \subset \Sigma(\alpha^t,\beta)$, $f\cdot 1_1 = E_{n+1}(f)$ holds in $\Sigma(\alpha^t,\beta)$. Then the kernel of the surjective linear mapping $x \in \Sigma(\alpha^t,\beta) \mapsto x\cdot\ddot{e} \in \ddot{\mathcal{S}}(\alpha^t,\beta)$ is the principal left ideal $\Sigma(\alpha^t,\beta)\cdot(1_1 - 1_0)$, and hence $\Sigma(\alpha^t,\beta)/(\Sigma(\alpha^t,\beta)\cdot(1_1 - 1_0)) \cong \ddot{\mathcal{S}}(\alpha^t,\beta)$ naturally.
\end{itemize}
\end{Theorem}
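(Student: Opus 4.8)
The plan is to isolate a single \emph{commuting-square identity} that encodes all the required compatibilities, and then read off the three assertions from it together with Corollary~\ref{C3.13} and Remarks~\ref{R3.4}(1). For Part~1, one sets $f\cdot g:=E_{k+\ell}(\iota_{k,\ell}(f\otimes g))$ for $f\in\mathcal{Z}(A_k)$, $g\in\mathcal{Z}(A_\ell)$; this lies in $\mathcal{Z}(A_{k+\ell})$ since $E_{k+\ell}$ is $\mathcal{Z}(A_{k+\ell})$-valued, and extending bilinearly gives a product on $\Sigma(\alpha^t,\beta)$. Unitality ($1_0\cdot x=x=x\cdot1_0$) is immediate from $(m1)$ and $E_n\!\upharpoonright_{\mathcal{Z}(A_n)}=\mathrm{id}$. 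Associativity reduces, by bilinearity, to $(f\cdot g)\cdot h=f\cdot(g\cdot h)$ for $f\in\mathcal{Z}(A_\ell)$, $g\in\mathcal{Z}(A_m)$, $h\in\mathcal{Z}(A_n)$; unwinding the definitions, the left side is $\big(E_{\ell+m+n}\circ\iota_{\ell+m,n}\circ(E_{\ell+m}\,\bar{\otimes}\,\mathrm{id})\circ(\iota_{\ell,m}\,\bar{\otimes}\,\mathrm{id})\big)(f\otimes g\otimes h)$ and the right side $\big(E_{\ell+m+n}\circ\iota_{\ell,m+n}\circ(\mathrm{id}\,\bar{\otimes}\,E_{m+n})\circ(\mathrm{id}\,\bar{\otimes}\,\iota_{m,n})\big)(f\otimes g\otimes h)$, so they agree by Corollary~\ref{C3.13}. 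For separate continuity, fix one of the two arguments (it has finite support); each coordinate of the product is then a finite sum of maps $a\mapsto E_N(\iota_{k,\ell}(a\otimes b))$ or $a\mapsto E_N(\iota_{k,\ell}(b\otimes a))$ with $b$ fixed, and these are $\sigma$-weakly continuous because the $W^*$-tensor product map is separately normal and the $\iota_{k,\ell}$, $E_N$ are normal; hence multiplication is separately continuous for the product topology on $\Sigma(\alpha^t,\beta)\subset\prod_{n\ge0}\mathcal{Z}(A_n)$.

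The technical core of Parts~2 and 3 is the identity
\begin{equation}\label{EqCS}
\Theta_{p+k,p+n}\big(E_{p+n}(\iota_{p,n}(f\otimes g))\big)=E_{p+k}\big(\iota_{p,k}(f\otimes\Theta_{k,n}(g))\big),\qquad 0\le n\le k,\ f\in\mathcal{Z}(A_p),\ g\in\mathcal{Z}(A_n).
\end{equation}
I would first prove, by induction on $r$ from $(m1)$, $(L)$, $(m2)$ and the fact that the unital $*$-homomorphism $\iota_{q,1}$ sends $1_q\otimes1_1$ to $1_{q+1}$, the auxiliary identity $\iota_{q,r}(a\otimes1_r)=a$ (under $A_q\subseteq A_{q+r}$) for $q,r\ge0$ and $a\in A_q$. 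Combining this with $(m2)$ — inserting $1_{k-n}$ in the last tensor leg — gives $\iota_{p,k}(f\otimes g)=\iota_{p,n}(f\otimes g)$ in $A_{p+k}$ for $g\in A_n\subseteq A_k$; hence, using $\Theta_{p+k,p+n}=E_{p+k}\!\upharpoonright_{\mathcal{Z}(A_{p+n})}$ together with \eqref{Eq2.2}, the left side of \eqref{EqCS} equals $E_{p+k}(E_{p+n}(\iota_{p,n}(f\otimes g)))=E_{p+k}(\iota_{p,n}(f\otimes g))=E_{p+k}(\iota_{p,k}(f\otimes g))$. Finally, decomposing $A_k=\bigoplus_{z''\in\mathfrak{Z}_k}z''A_k$ and applying Lemma~\ref{L3.12}(2) with the triple $(\ell,m,n)=(p,0,k)$ (using $E_k(y)=\tau_{z''}(y)z''$ for $y\in z''A_k$ and normality) yields $E_{p+k}(\iota_{p,k}(f\otimes b))=E_{p+k}(\iota_{p,k}(f\otimes E_k(b)))$ for all $b\in A_k$; taking $b=g\in\mathcal{Z}(A_n)$, so that $E_k(g)=\Theta_{k,n}(g)$, gives \eqref{EqCS}.

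For Part~2, fix $f\in\mathcal{Z}(A_m)$ and put $\Psi_n(g):=\ddot{\Theta}_{\infty,m+n}\big(E_{m+n}(\iota_{m,n}(f\otimes g))\big)$ for $g\in\mathcal{Z}(A_n)$. The case $k=n+1$ of \eqref{EqCS} (with $p=m$), together with $\ddot{\Theta}_{\infty,m+n+1}\circ\Theta_{m+n+1,m+n}=\ddot{\Theta}_{\infty,m+n}$, gives $\Psi_{n+1}\circ\Theta_{n+1,n}=\Psi_n$, so by Remarks~\ref{R3.4}(1) the rule $\ddot{\Theta}_{\infty,n}(g)\mapsto\Psi_n(g)$ defines a linear map on $\ddot{\mathcal{S}}(\alpha^t,\beta)$; extending bilinearly in $f$ over $\Sigma(\alpha^t,\beta)$ yields \eqref{Eq3.9}. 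The module identities $(f\cdot g)\cdot s=f\cdot(g\cdot s)$ and $1_0\cdot s=s$ reduce, on generators $s=\ddot{\Theta}_{\infty,n}(h)$ and by bilinearity, to Corollary~\ref{C3.13} and $(m1)$ exactly as for associativity in Part~1. For the $\mathcal{S}(\alpha^t,\beta)$-version it suffices that $\lambda_f:=f\cdot(\,\cdot\,)$ on $\ddot{\mathcal{S}}(\alpha^t,\beta)$ preserves $\mathrm{Ker}(q_\mathcal{S})$: if $\ddot{\Theta}_{\infty,n}(g)\in\mathrm{Ker}(q_\mathcal{S})$, i.e.\ $\Vert\Theta_{k,n}(g)\Vert\to0$ as $k\to\infty$, then by \eqref{EqCS} and contractivity of $E_{m+k}$ and $\iota_{m,k}$ one gets $\Vert\Theta_{m+k,m+n}(E_{m+n}(\iota_{m,n}(f\otimes g)))\Vert\le\Vert f\Vert\,\Vert\Theta_{k,n}(g)\Vert\to0$, hence $\lambda_f(\ddot{\Theta}_{\infty,n}(g))\in\mathrm{Ker}(q_\mathcal{S})$ by the characterization of $\mathrm{Ker}(q_\mathcal{S})$; thus $\lambda_f$ descends along $q_\mathcal{S}$, \eqref{Eq3.9} works with $\Theta_{\infty,n}$ in place of $\ddot{\Theta}_{\infty,n}$, and the module axioms pass to $\mathcal{S}(\alpha^t,\beta)$ by applying $q_\mathcal{S}$. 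Taking $n=0$, $g=1_0$ in \eqref{Eq3.9} and using $\ddot{e}=\ddot{\Theta}_{\infty,0}(1_0)$, $(m1)$ and $E_m\!\upharpoonright_{\mathcal{Z}(A_m)}=\mathrm{id}$ gives $f\cdot\ddot{e}=\ddot{\Theta}_{\infty,m}(f)$, whence $f\cdot e=q_\mathcal{S}(f\cdot\ddot{e})=\Theta_{\infty,m}(f)$.

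For Part~3, note that for $f\in\mathcal{Z}(A_n)$ assumption $(L)$ gives $\iota_{n,1}(f\otimes1_1)=f$ in $A_{n+1}$, so $f\cdot1_1=E_{n+1}(\iota_{n,1}(f\otimes1_1))=E_{n+1}(f)=\Theta_{n+1,n}(f)=Pf$, where $P$ is the transition operator of Remarks~\ref{R3.4}(1); since $f\cdot1_0=f$, this yields $f\cdot(1_1-1_0)=(P-I)f$. By the last formula of Part~2 and additivity of the module action, $x\cdot\ddot{e}=\sum_n\ddot{\Theta}_{\infty,n}(f_n)$ for $x=\sum_nf_n$, so $x\mapsto x\cdot\ddot{e}$ is exactly the canonical surjection $\Sigma(\alpha^t,\beta)\twoheadrightarrow\Sigma(\alpha^t,\beta)/\operatorname{Im}(P-I)\cong\ddot{\mathcal{S}}(\alpha^t,\beta)$ of Remarks~\ref{R3.4}(1); its kernel $\operatorname{Im}(P-I)$ coincides, via $f\cdot(1_1-1_0)=(P-I)f$ and additivity, with the principal left ideal $\Sigma(\alpha^t,\beta)\cdot(1_1-1_0)$, whence the asserted isomorphism. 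The only genuinely delicate point in all of this is the compatibility packaged in \eqref{EqCS}, and inside it the auxiliary identity $\iota_{q,r}(a\otimes1_r)=a$: this is precisely where assumption $(L)$ (as opposed to $(R)$) and the coherence $(m2)$ are used, while everything else is bookkeeping around Corollary~\ref{C3.13}, Lemma~\ref{L3.12} and Remarks~\ref{R3.4}(1).
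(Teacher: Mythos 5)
Your argument is correct and is essentially the paper's own proof: associativity, the unit and separate continuity come from Corollary~\ref{C3.13}, $(m1)$ and normality of the maps; well-definedness of the module action and its descent to $\mathcal{S}(\alpha^t,\beta)$ rest on the same compatibility $\Theta_{m+k,m+n}\big(E_{m+n}(\iota_{m,n}(f\otimes g))\big)=E_{m+k}\big(\iota_{m,k}(f\otimes\Theta_{k,n}(g))\big)$ that the paper establishes for $k=n+1$ from $(L)$ and Corollary~\ref{C3.13} and then iterates, combined with the universal property in Remarks~\ref{R3.4}(1); and Part~3 is read off, exactly as in the paper, from $f\cdot 1_1=E_{n+1}(f)$ together with the description of $\ddot{\mathcal{S}}(\alpha^t,\beta)$ as $\Sigma(\alpha^t,\beta)/\operatorname{Im}(P-I)$. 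The only minor deviations are that you prove the multi-step identity directly (via the auxiliary relation $\iota_{q,r}(a\otimes 1_r)=a$ and Lemma~\ref{L3.12}(2)) instead of iterating the one-step case, and that you justify the passage to $\mathcal{S}(\alpha^t,\beta)$ through the kernel characterization of $q_\mathcal{S}$ rather than the paper's norm inequality obtained from \cite[Proposition~4.16]{MawhinneyTodorov:DissertationesMath18}; both variants are sound.
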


\begin{proof}
(1) Since all the involved maps are linear, it suffices to confirm the associativity and that $1 \in \mathcal{Z}(A_0)=\mathbb{C}$ is a unit.

{\samepage
By Corollary \ref{C3.13} we have
\begin{align*}
\bigg(\sum_n f_n\cdot\sum_n g_n\bigg)\cdot\sum_n h_n
&=
\sum_n \bigg(\sum_{k+\ell=n} E_n(\iota_{k,\ell}(f_k\otimes g_\ell))\bigg)\cdot\sum_n h_n \\
&=
\sum_n\bigg(\sum_{k+\ell+m=n} E_n(\iota_{k+\ell,m}(E_{k+\ell}(\iota_{k,\ell}(f_k\otimes g_\ell))\otimes h_m)\bigg) \\
&=
\sum_n \bigg(\sum_{k+\ell+m=n} E_n(\iota_{k,\ell+m}(f_k\otimes E_{\ell+m}(\iota_{\ell,m}(g_\ell\otimes h_m)))\bigg) \\
&=
\sum_n f_n\cdot \sum_n \bigg(\sum_{\ell+m=n} E_n(\iota_{\ell,m}(g_\ell\otimes h_m))\bigg) \\
&=
\sum_n f_n\cdot\Big(\sum_n g_n\cdot\sum_n h_n\Big).
\end{align*}
That $1_0$ is a unit follows from requirement $(m1)$.

}

The separate continuity of multiplication follows from that the $E_n$ and the $\iota_{m,n}$ are all normal maps.

(2) We first confirm that \eqref{Eq3.9} is well defined. We have
\begin{align*}
&\ddot{\Theta}_{\infty,m+n+1}(E_{m+n+1}(\iota_{m,n+1}(f\otimes\Theta_{n+1,n}(g)))) \\
&\qquad=
\ddot{\Theta}_{\infty,m+n+1}(E_{m+n+1}(\iota_{m,n+1}(f\otimes E_{n+1}(\iota_{n,1}(g\otimes1))))) \quad \text{(use $(L)$)} \\
&\qquad=
\ddot{\Theta}_{\infty,m+n+1}(E_{m+n+1}(\iota_{m+n,1}(E_{m+n}(\iota_{m,n}(f\otimes g))\otimes1))) \quad \text{(use Corollary \ref{C3.13})} \\
&\qquad=
\ddot{\Theta}_{\infty,m+n+1}(\Theta_{m+n+1,m+n}(E_{m+n}(\iota_{m,n}(f\otimes g)))) \quad \text{(use $(L)$)} \\
&\qquad=
\ddot{\Theta}_{\infty,m+n}(E_{m+n}(\iota_{m,n}(f\otimes g))),
\end{align*}
which shows that the mapping given by \eqref{Eq3.9} in question is well defined thanks to \cite[Remark~3.1]{MawhinneyTodorov:DissertationesMath18}. It is obvious that the resulting map is bilinear.

For any $f_1 \in \mathcal{Z}(A_\ell)$, $f_2 \in \mathcal{Z}(A_m)$ and $\ddot{\Theta}_{\infty,n}(g) \in \ddot{\mathcal{S}}(\alpha^t,\beta)$, we have
\begin{align*}
&(f_1\cdot f_2)\cdot\ddot{\Theta}_{\infty,n}(g)
\\
&\qquad=
E_{\ell+m}(\iota_{\ell,m}(f_1\otimes f_2))\cdot\ddot{\Theta}_{\infty,n}(g) \\
&\qquad=
\ddot{\Theta}_{\infty,\ell+m+n}(E_{\ell+m+n}(\iota_{\ell+m,n}(E_{\ell+m}(\iota_{\ell,m}(f_1\otimes f_2))\otimes g))) \\
&\qquad=
\ddot{\Theta}_{\infty,\ell+m+n}(E_{\ell+m+n}(\iota_{\ell,m+n}(f_1\otimes E_{m,n}(\iota_{m,n}(f_2 \otimes g))))) \quad \text{(use Corollary \ref{C3.13})} \\
&\qquad=
f_1\cdot\ddot\Theta_{\infty,m+n}(E_{m,n}(\iota_{m,n}(f_2 \otimes g))) \\
&\qquad=
f_1\cdot\big(f_2\cdot\ddot{\Theta}_{\infty,n}(g)\big).
\end{align*}
Hence \eqref{Eq3.9} indeed gives a left $\Sigma(\alpha^t,\beta)$-module structure on $\ddot{\mathcal{S}}(\alpha^t,\beta)$.

For any $m\geq0$, $\ell>n\geq0$, $k > m+ \ell$, $f \in \mathcal{Z}(A_m)$ and $g \in \mathcal{Z}(A_n)$ one has
\[
\Vert \Theta_{k,m+n}(E_{m+n}(\iota_{m,n}(f\otimes g)))\Vert
=
\Vert \Theta_{k,m+\ell}(E_{m+\ell}(\iota_{m,\ell}(f\otimes\Theta_{\ell,n}(g))))\Vert
\leq
\Vert f\Vert\,\Vert\Theta_{\ell,n}(g)\Vert
\]
by the above computation utilizing Corollary \ref{C3.13}. Then, taking the limit as $k\to\infty$ we have, by \cite[Proposition 4.16]{MawhinneyTodorov:DissertationesMath18},
\[
\Vert \Theta_{\infty,m+n}(E_{m+n}(\iota_{m,n}(f\otimes g)))\Vert
\leq
\Vert f\Vert\,\Vert\Theta_{\ell,n}(g)\Vert
\]
as long as $\ell > n$. Taking the limit as $\ell\to\infty$ we conclude that
\[
\Vert \Theta_{\infty,m+n}(E_{m+n}(\iota_{m,n}(f\otimes g)))\Vert
\leq
\Vert f\Vert\,\Vert\Theta_{\infty,n}(g)\Vert.
\]
This inequality shows that replacing $\ddot{\Theta}_{\infty,n}$ with $\Theta_{\infty,n}$ in \eqref{Eq3.9} still works to define a left $\Sigma(\alpha^t,\beta)$-module structure on $\mathcal{S}(\alpha^t,\beta)$.

The last assertion is trivial from requirement $(m1)$.

(3) Observe that
\[
f\cdot1_1
=
E_{n+1}(\iota_{n+1,1}(f\otimes 1))
= E_n(f)
\]
by requirement $(L)$.
The desired assertion follows from this observation and Remarks \ref{R3.4}(1).
\end{proof}

Here is an observation about the positivity on $\ddot{\mathcal{S}}(\alpha^t,\beta)$ and $\mathcal{S}(\alpha^t,\beta)$ in relation with the left $\Sigma(\alpha^t,\beta)$-module structure established above.

\begin{Proposition}\label{P3.15} The positive cones $\ddot{\mathcal{S}}^+(\alpha^t,\beta)$ on $\ddot{\mathcal{S}}(\alpha^t,\beta)$ and $\mathcal{S}^+(\alpha^t,\beta)$ on $\mathcal{S}(\alpha^t,\beta)$, respectively, enjoy that
\[
\mathcal{Z}(A_m)^+\cdot\ddot{\mathcal{S}}^+(\alpha^t,\beta) \subseteq \ddot{\mathcal{S}}^+(\alpha^t,\beta), \qquad
\mathcal{Z}(A_m)^+\cdot\mathcal{S}^+(\alpha^t,\beta) \subseteq \mathcal{S}^+(\alpha^t,\beta).
\]
for every $m \geq0$, where $\mathcal{Z}(A_m)^+$ denotes the positive elements of $\mathcal{Z}(A_m)$.
\end{Proposition}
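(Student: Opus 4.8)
The plan is to reduce both inclusions, in the end, to the positivity of the three structure maps appearing in \eqref{Eq3.9} — the $*$-homomorphism $\iota_{m,n}$, the conditional expectation $E_{m+n}$, and the UCP map $\ddot\Theta_{\infty,m+n}$ (resp.\ $\Theta_{\infty,m+n}$) — after first writing an arbitrary positive element in a normal form coming from a single level of the inductive system. I would begin with the normal form on $\ddot{\mathcal S}(\alpha^t,\beta)$: every element is of the form $\ddot\Theta_{\infty,n}(g)$ for some $n$ and $g\in\mathcal Z(A_n)$ (a finite sum $\sum_{k\le N}\ddot\Theta_{\infty,k}(f_k)$ equals $\ddot\Theta_{\infty,N}\big(\sum_{k\le N}\Theta_{N,k}(f_k)\big)$), and if moreover $\ddot\Theta_{\infty,n}(g)\ge 0$ then — exactly the $k=1$ instance of the observation used at the start of the proof that every unital positive map out of $\mathcal S(\alpha^t,\beta)$ is CP — there is $n'\ge n$ with $\Theta_{n',n}(g)\ge 0$ in $\mathcal Z(A_{n'})$; since $\ddot\Theta_{\infty,n}=\ddot\Theta_{\infty,n'}\circ\Theta_{n',n}$, after renaming we may assume $g\in\mathcal Z(A_n)^+$.

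\textbf{The case of $\ddot{\mathcal S}(\alpha^t,\beta)$.} Take $f\in\mathcal Z(A_m)^+$ and $x=\ddot\Theta_{\infty,n}(g)$ with $g\in\mathcal Z(A_n)^+$ as above. By \eqref{Eq3.9},
\[
f\cdot x=\ddot\Theta_{\infty,m+n}\big(E_{m+n}(\iota_{m,n}(f\otimes g))\big).
\]
Now $f\otimes g\ge 0$ in $A_m\,\bar{\otimes}\,A_n$ (a product of positive elements of the two tensor factors); $\iota_{m,n}$ is a $*$-homomorphism, so $\iota_{m,n}(f\otimes g)\ge 0$ in $A_{m+n}$; $E_{m+n}$ is a positive conditional expectation, so $E_{m+n}(\iota_{m,n}(f\otimes g))\ge 0$ in $\mathcal Z(A_{m+n})$; and $\ddot\Theta_{\infty,m+n}$ is UCP, hence positive. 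Therefore $f\cdot x\ge 0$, which is the first inclusion.

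\textbf{The case of $\mathcal S(\alpha^t,\beta)$.} Here I would pass through the quotient $q_{\mathcal S}\colon\ddot{\mathcal S}(\alpha^t,\beta)\twoheadrightarrow\mathcal S(\alpha^t,\beta)$. Since the module structure on $\mathcal S(\alpha^t,\beta)$ is given by the same formula \eqref{Eq3.9} with $\Theta_{\infty,\bullet}=q_{\mathcal S}\circ\ddot\Theta_{\infty,\bullet}$, one checks on the generators $w=\ddot\Theta_{\infty,n}(g)$ that $q_{\mathcal S}(f\cdot w)=f\cdot q_{\mathcal S}(w)$, so $q_{\mathcal S}$ intertwines the two $\Sigma(\alpha^t,\beta)$-module structures and in particular maps $\mathrm{Ker}(q_{\mathcal S})$ into itself. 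Given $y\in\mathcal S^+(\alpha^t,\beta)$ and $f\in\mathcal Z(A_m)^+$, write $y=q_{\mathcal S}(Y)$; by the construction of the Archimedeanization recalled in the proof of the CP proposition, for each $r>0$ there are $Z_r\ge 0$ in $\ddot{\mathcal S}(\alpha^t,\beta)$ and $N_r\in\mathrm{Ker}(q_{\mathcal S})$ with $Y+r\ddot e=Z_r+N_r$. Applying $f\cdot(\,\cdot\,)$ and using the intertwining property together with the first part,
\[
f\cdot y+r\,(f\cdot e)=q_{\mathcal S}(f\cdot(Y+r\ddot e))=q_{\mathcal S}(f\cdot Z_r)\ge 0\qquad\text{for every }r>0 .
\]
Since $f\cdot e=\Theta_{\infty,m}(f)$ is positive with $\|f\cdot e\|\le\|f\|$, we have $f\cdot e\le\|f\|\,e$, hence $f\cdot y+r\|f\|\,e\ge f\cdot y+r(f\cdot e)\ge 0$ for all $r>0$, and the Archimedean property of the operator system $\mathcal S(\alpha^t,\beta)$ forces $f\cdot y\ge 0$.

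\textbf{Main obstacle.} Everything except the last paragraph is routine positivity bookkeeping; the one delicate point is the passage to $\mathcal S(\alpha^t,\beta)$, because its positive cone is the Archimedeanization of $\bigcup_m\Theta_{\infty,m}(\mathcal Z(A_m)^+)$ rather than that union itself, so one cannot simply pull a positive element of $\mathcal S(\alpha^t,\beta)$ back to a genuine positive element at a single level — instead one must work with the approximants $Z_r$ and invoke the Archimedean property, dominating $f\cdot e$ by $\|f\|\,e$. (Alternatively, one may use the characterization that $\Theta_{\infty,n}(g)\ge 0$ iff for every $\varepsilon>0$ there is $m\ge n$ with $\Theta_{m,n}(g)+\varepsilon 1_m\ge 0$, lift $y+\varepsilon e$ to a genuine positive element at some level, and rerun the $\ddot{\mathcal S}$ argument; the conclusion is the same.)
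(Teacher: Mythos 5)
Your proof is correct and follows essentially the same route as the paper: for $\ddot{\mathcal S}(\alpha^t,\beta)$ one replaces $g$ by a genuinely positive representative $\Theta_{\ell,n}(g)$ at a higher level and uses positivity of $\iota_{m,\ell}$, $E_{m+\ell}$ and $\ddot\Theta_{\infty,m+\ell}$, and for $\mathcal S(\alpha^t,\beta)$ one lifts through $q_{\mathcal S}$, applies the first part to an approximately positive lift, dominates $f\cdot e=\Theta_{\infty,m}(f)$ by $\Vert f\Vert\,e$, and concludes by the Archimedean property. The only (immaterial) difference is that you describe the quotient cone via the decomposition $Y+r\ddot e=Z_r+N_r$ with $N_r\in\mathrm{Ker}(q_{\mathcal S})$ together with the observation that the module action commutes with $q_{\mathcal S}$, whereas the paper works directly with a hermitian lift $\ddot s$ satisfying $\ddot s+\delta\ddot e\ge 0$ for all $\delta>0$; these are equivalent descriptions of the Archimedeanization.
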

\begin{proof}
Let $f \in \mathcal{Z}(A_m)^+$ and $\ddot{s} = \ddot{\Theta}_{\infty,n}(g)$ with $g \in \mathcal{Z}(A_n)$ be given. Assume $\ddot{s} \geq 0$. Then $\Theta_{\ell,n}(g) \geq 0$ in $\mathcal{Z}(A_\ell)$ for some $\ell \geq n$ thanks to the construction of $\ddot{\mathcal{S}}^+(\alpha^t,\beta)$ (see the place just before \cite[Lemma 3.2]{MawhinneyTodorov:DissertationesMath18}). Thus,
\[
f\cdot\ddot{s} = \ddot{\Theta}_{\infty,m+n}(E_{m+\ell}(\iota_{m,\ell}(f\otimes\Theta_{\ell,n}(g)))) \geq 0.
\]
Hence the former has been confirmed.

We then confirm the latter by using the former. Let $s \in \mathcal{S}^+(\alpha^t,\beta)$ be given. Then $s = q_\mathcal{S}(\ddot{s})$ with $\ddot{s}=\ddot{s}^* \in \ddot{\mathcal{S}}(\alpha^t,\beta)$ holds. By the construction of $\mathcal{S}^+(\alpha^t,\beta)$ (see the place just before \cite[Lemma 3.10]{MawhinneyTodorov:DissertationesMath18}) we observe that $\ddot{s} + \delta\ddot{e} \geq 0$ for all $\delta>0$. Since
\[
f\cdot\ddot{e} = \ddot{\Theta}_{\infty,n}(f) \leq \Vert f\Vert\,\ddot{\Theta}_{\infty,n}(1) = \Vert f\Vert\,\ddot{e}
\]
by Theorem \ref{T3.14}(2), we have
\[
0 \leq q_\mathcal{S}(f\cdot(\ddot{s}+r\ddot{e}))
\leq f\cdot s + \Vert f\Vert\,\delta e
\]
for all $\delta > 0$, where $q_\mathcal{S}(f\cdot\ddot{s}) = f\cdot q_\mathcal{S}(\ddot{s}) = f\cdot s$ by definition. This implies that $f\cdot s \geq 0$.
\end{proof}

The remaining question is what condition on the left multiplicative inductive sequence $A_n$ makes both $\ddot{\mathcal{S}}(\alpha^t,\beta)$ and $\mathcal{S}(\alpha^t,\beta)$ unital commutative algebras. We give such a condition below, though it is not satisfied in the case of $\mathrm{U}_q(n)$, our main example. Actually, $\Sigma(\alpha^t,\beta)$ is not commutative in the case. See Proposition \ref{P5.21}.

Let us introduce the following property $(c)$: For each pair $m,n \geq 0$ there are $\gamma_{m+n}^{(m,n)} \in \mathrm{Aut}(A_{m+n})$, $\gamma_m^{(m,n)} \in \mathrm{Aut}(A_m)$ and $\gamma_n^{(m,n)} \in \mathrm{Aut}(A_n)$ such that
\begin{itemize}\itemsep=0pt
\item[$(c1)$] $\gamma_{m+n}^{(m,n)}$, $\gamma_m^{(m,n)}$ and $\gamma_n^{(m,n)}$ trivially act on the centers,
\item[$(c2)$] $\gamma_{m+n}^{(m,n)}(\iota_{m,n}(a\otimes b)) = \iota_{n,m}\big(\gamma_n^{(m,n)}(b)\otimes\gamma_m^{(m,n)}(a)\big)$ holds for every pair $a \in A_m$ and $b \in A_n$,
\item[$(c3)$] $\gamma_{m+n}^{(m,n)}\circ\alpha_{m+n}^t = \alpha_{m+n}^t\circ\gamma_{m+n}^{(m,n)}$ for every $t \in \mathbb{R}$.
\end{itemize}
The motivation for introducing the property $(c)$ is to abstract the cases of infinite symmetric group $\mathfrak{S}_\infty$ as well as infinite-dimensional unitary group $\mathrm{U}(\infty)$ in the present framework. In~those classical cases, each $\gamma_{m+n}^{(m,n)}$ is given by an inner conjugacy and each $\alpha_{m+n}^t$ must be the trivial flow. The next proposition shows that this property is sufficient to make both $\ddot{\mathcal{S}}(\alpha^t,\beta)$ and~$\mathcal{S}(\alpha^t,\beta)$ unital commutative algebras.

\begin{Proposition}\label{P3.16}
If the left multiplicative structure $\iota_{m,n}$ has property $(c)$, then all $\Sigma(\alpha^t,\beta)$, $\ddot{\mathcal{S}}(\alpha^t,\beta)$ and $\mathcal{S}(\alpha^t,\beta)$ are unital commutative algebras with units $1_0$, $\ddot{e}$ and $e$, respectively. Moreover, the mappings $x \in \Sigma(\alpha^t,\beta) \mapsto x\cdot\ddot{e} \in \ddot{\mathcal{S}}(\alpha^t,\beta)$ and $x \in \Sigma(\alpha^t,\beta) \mapsto x\cdot e \in \mathcal{S}(\alpha^t,\beta)$ define surjective algebra homomorphisms, and furthermore, $\mathcal{S}(\alpha^t,\beta)$ is a normed algebra.
\end{Proposition}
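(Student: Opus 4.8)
The plan is to derive everything from the multiplication formula in Theorem~\ref{T3.14}(1) together with the symmetry provided by property~$(c)$. First I would prove commutativity of $\Sigma(\alpha^t,\beta)$. Since the product is bilinear and separately $\sigma$-weakly continuous, it suffices to check $z_1\cdot z_2 = z_2\cdot z_1$ for minimal central projections $z_1\in\mathfrak{Z}_m$, $z_2\in\mathfrak{Z}_n$; that is, to show $E_{m+n}(\iota_{m,n}(z_1\otimes z_2)) = E_{n+m}(\iota_{n,m}(z_2\otimes z_1))$ in $\mathcal{Z}(A_{m+n})=\mathcal{Z}(A_{n+m})$. Apply $\gamma_{m+n}^{(m,n)}$ to the left-hand side: by $(c3)$ it commutes with $\alpha^t_{m+n}$, hence it commutes with each KMS state $\tau_z$ by uniqueness (each $\tau_z\circ\gamma^{(m,n)}_{m+n}$ restricted to a central block is again the unique normal KMS state on that block, since $\gamma^{(m,n)}_{m+n}$ fixes the center by $(c1)$ and permutes the central blocks trivially), so $\gamma_{m+n}^{(m,n)}$ commutes with $E_{m+n}$; and by $(c1)$ it fixes $\mathcal{Z}(A_{m+n})$ pointwise. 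Therefore $E_{m+n}(\iota_{m,n}(z_1\otimes z_2)) = \gamma^{(m,n)}_{m+n}(E_{m+n}(\iota_{m,n}(z_1\otimes z_2))) = E_{m+n}(\gamma^{(m,n)}_{m+n}(\iota_{m,n}(z_1\otimes z_2))) = E_{m+n}(\iota_{n,m}(\gamma^{(m,n)}_n(z_2)\otimes\gamma^{(m,n)}_m(z_1)))$ by $(c2)$, and since $\gamma^{(m,n)}_m$, $\gamma^{(m,n)}_n$ fix $z_1$, $z_2$ (they are central, use $(c1)$), this equals $E_{n+m}(\iota_{n,m}(z_2\otimes z_1)) = z_2\cdot z_1$. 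This gives commutativity of $\Sigma(\alpha^t,\beta)$; associativity and the unit $1_0$ were already shown in Theorem~\ref{T3.14}(1).

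Next I would transfer this to $\ddot{\mathcal{S}}(\alpha^t,\beta)$ and $\mathcal{S}(\alpha^t,\beta)$. By Theorem~\ref{T3.14}(3) the map $x\mapsto x\cdot\ddot e$ is a surjective linear map $\Sigma(\alpha^t,\beta)\twoheadrightarrow\ddot{\mathcal{S}}(\alpha^t,\beta)$ with kernel the left ideal $\Sigma(\alpha^t,\beta)\cdot(1_1-1_0)$; since $\Sigma(\alpha^t,\beta)$ is now commutative this kernel is a two-sided ideal, so the quotient inherits a commutative, associative, unital algebra structure, with unit the image of $1_0$, namely $1_0\cdot\ddot e = \ddot{\Theta}_{\infty,0}(1_0) = \ddot e$ by Theorem~\ref{T3.14}(2). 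One checks that this quotient algebra structure agrees with the left $\Sigma(\alpha^t,\beta)$-module structure of Theorem~\ref{T3.14}(2): for $f\in\mathcal{Z}(A_m)$ and $\ddot s = g\cdot\ddot e$ with $g\in\Sigma(\alpha^t,\beta)$, associativity of $\Sigma$ gives $f\cdot\ddot s = f\cdot(g\cdot\ddot e) = (f\cdot g)\cdot\ddot e$, which is exactly the module action descending to the quotient; defining the internal product on $\ddot{\mathcal{S}}(\alpha^t,\beta)$ by $\ddot s_1\cdot\ddot s_2 := g_1\cdot\ddot s_2$ when $\ddot s_1 = g_1\cdot\ddot e$ is then well defined (independence of the lift follows since the kernel is a two-sided ideal) and makes $x\mapsto x\cdot\ddot e$ an algebra homomorphism. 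The same argument applies verbatim with $e$ and $\Theta_{\infty,n}$ in place of $\ddot e$ and $\ddot\Theta_{\infty,n}$, using the second half of Theorem~\ref{T3.14}(2) and the fact that $q_\mathcal{S}$ is a surjection intertwining the two module structures, giving the surjective homomorphism $x\mapsto x\cdot e$ onto $\mathcal{S}(\alpha^t,\beta)$.

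Finally, to see that $\mathcal{S}(\alpha^t,\beta)$ is a normed algebra I would use the submultiplicativity estimate already proved inside Theorem~\ref{T3.14}(2): for $f\in\mathcal{Z}(A_m)$ and $g\in\mathcal{Z}(A_n)$ one has $\Vert\Theta_{\infty,m+n}(E_{m+n}(\iota_{m,n}(f\otimes g)))\Vert \leq \Vert f\Vert\,\Vert\Theta_{\infty,n}(g)\Vert$. Since $\mathcal{S}(\alpha^t,\beta)=\bigcup_{n\geq0}\Theta_{\infty,n}(\mathcal{Z}(A_n))$ by Remarks~\ref{R3.4}(2), every element is of the form $\Theta_{\infty,m}(f)$, and combining the estimate with the fact that $\Vert\Theta_{\infty,m}(f)\Vert = \inf\{\Vert f'\Vert : \Theta_{\infty,m}(f) = \Theta_{\infty,m'}(f')\}$-type behaviour (more precisely, taking the infimum over representatives and using that the product is independent of representative) yields $\Vert s_1\cdot s_2\Vert \leq \Vert s_1\Vert\,\Vert s_2\Vert$ for all $s_1,s_2\in\mathcal{S}(\alpha^t,\beta)$. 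The main obstacle I anticipate is the commutativity argument for $\Sigma(\alpha^t,\beta)$ — specifically, justifying carefully that $\gamma^{(m,n)}_{m+n}$ commutes with the conditional expectation $E_{m+n}$; this rests on the uniqueness of the normal KMS states $\tau_z$ together with properties $(c1)$ and $(c3)$, and it is exactly the point where the somewhat rigid hypotheses $(c1)$–$(c3)$ are needed. Everything downstream of commutativity is formal quotient-of-algebra bookkeeping plus the norm estimate that has in essence already been carried out.
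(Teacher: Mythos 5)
Your proof is correct, and its crucial step is the same as the paper's: both derive $\tau_z\circ\gamma^{(m,n)}_{m+n}=\tau_z$ from the uniqueness of the normal $(\alpha_z^t,\beta)$-KMS state on each central block (using $(c1)$ and $(c3)$), and then use $(c2)$ together with the fact that the $\gamma$'s fix the centers to conclude $E_{m+n}(\iota_{m,n}(f\otimes g))=E_{m+n}(\iota_{n,m}(g\otimes f))$, hence commutativity of $\Sigma(\alpha^t,\beta)$. Where you genuinely diverge is the transfer to the limits: the paper defines the product on $\ddot{\mathcal{S}}(\alpha^t,\beta)$ and $\mathcal{S}(\alpha^t,\beta)$ directly by $(\ddot{\Theta}_{\infty,m}(f),\ddot{\Theta}_{\infty,n}(g))\mapsto f\cdot\ddot{\Theta}_{\infty,n}(g)=g\cdot\ddot{\Theta}_{\infty,m}(f)$ and checks well-definedness via the identity $\Theta_{m',m}(f)\cdot\ddot{\Theta}_{\infty,n}(g)=f\cdot\ddot{\Theta}_{\infty,n}(g)$, whereas you quotient $\Sigma(\alpha^t,\beta)$ by the kernel ideal of Theorem~\ref{T3.14}(3); both are fine, and your route buys cleaner bookkeeping at the price of two points you should make explicit. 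First, Theorem~\ref{T3.14}(3) identifies only the kernel of $x\mapsto x\cdot\ddot{e}$; for $x\mapsto x\cdot e$ the kernel is not identified there, but it is still a left ideal by the module associativity in Theorem~\ref{T3.14}(2) (if $x\cdot e=0$ then $(yx)\cdot e=y\cdot(x\cdot e)=0$), hence two-sided by commutativity, which is all your quotient argument needs — so replace ``the same argument verbatim'' by this observation. Second, in the normed-algebra step the vague ``inf over representatives'' should be the precise limit $\Vert\Theta_{m',m}(f)\Vert\to\Vert\Theta_{\infty,m}(f)\Vert$ as $m'\to\infty$ from \cite[Proposition~4.16]{MawhinneyTodorov:DissertationesMath18}: since the product is representative-independent, $\Vert s_1\cdot s_2\Vert=\Vert\Theta_{m',m}(f)\cdot\Theta_{\infty,n}(g)\Vert\le\Vert\Theta_{m',m}(f)\Vert\,\Vert\Theta_{\infty,n}(g)\Vert$, and letting $m'\to\infty$ gives submultiplicativity — which is exactly the computation the paper performs, so your sketch closes once stated this way.
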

\begin{proof}
Let $m,n\geq 0$ be fixed. For simplicity, we write $\gamma_{m+n} := \gamma_{m+n}^{(m,n)}$, $\gamma_{m} := \gamma_{m}^{(m,n)}$ and $\gamma_{n} := \gamma_{n}^{(m,n)}$.

Observe that for each $z \in \mathfrak{Z}_{m+n}$, $\gamma_{m+n}$ induces an $*$-automorphism of $zA_{m+n}$ by $(c1)$. Then, one can confirm, by checking the KMS condition thanks to $(c1)$ and $(c3)$, that $\tau_z\circ\gamma_{m+n} = \tau_z$ holds.

Let $f \in \mathcal{Z}(A_m)$ and $g \in \mathcal{Z}(A_n)$ be arbitrarily given. We have
{\samepage\begin{align*}
E_{m+n}(\iota_{n,m}(g\otimes f))
&=
E_{m+n}(\gamma_{m+n}(\iota_{m,n}(f\otimes g))) \quad \text{(by $(c1)$ and $(c2)$)} \\
&=
\sum_{z\in\mathfrak{Z}_{m+n}} \tau_z(z\,\gamma_{m+n}(\iota_{m,n}(f\otimes g)))z \\
&=
\sum_{z\in\mathfrak{Z}_{m+n}} \tau_z(\gamma_{m+n}(z\,\iota_{m,n}(f\otimes g)))z \quad \text{(by $(c1)$)} \\
&=
\sum_{z\in\mathfrak{Z}_{m+n}} \tau_z(z\,\iota_{m,n}(f\otimes g))z \quad \text{(by the above consideration)}\\
&=
E_{m+n}(\iota_{m,n}(f\otimes g)).
\end{align*}}
This immediately implies that $\Sigma(\alpha^t,\beta)$ is a commutative algebra.
It also follows that
\begin{align*}
g\cdot\ddot{\Theta}_{\infty,m}(f)
&=
\ddot{\Theta}_{\infty,m+n}(E_{m+n}(\iota_{n,m}(g\otimes f))) \\
&= \ddot{\Theta}_{\infty,m+n}(E_{m+n}(\iota_{m,n}(f\otimes g))) = f\cdot\ddot{\Theta}_{\infty,n}(g)
\end{align*}
holds.

We have
\[
\Theta_{m',m}(f)\cdot\ddot{\Theta}_{\infty,n}(g) = g\cdot\ddot{\Theta}_{\infty,m'}(\Theta_{m',m}(f)) = g\cdot\ddot{\Theta}_{\infty,m}(f) = f\cdot\ddot{\Theta}_{\infty,n}(g)
\]
as long as $m' > m$, and applying $q_\mathcal{S}$ to this identity we also obtain that
\[
\Theta_{m',m}(f)\cdot\Theta_{\infty,n}(g) = g\cdot\Theta_{\infty,m}(f) = f\cdot\Theta_{\infty,n}(g)
\]
as long as $m < m'$. In particular,
\[
(\ddot{\Theta}_{\infty,m}(f),\ddot{\Theta}_{\infty,n}(g)) \mapsto f\cdot\ddot{\Theta}_{\infty,n}(g)=g\cdot\ddot{\Theta}_{\infty,m}(f)
\]
defines a well-defined, commutative multiplication on $\ddot{\mathcal{S}}(\alpha^t,\beta)$.

The proof of Theorem \ref{T3.14}(2) shows that
\[
\Vert f\cdot\Theta_{\infty,n}(g)\Vert = \Vert \Theta_{m',m}(f)\cdot\Theta_{\infty,n}(g))\Vert \leq \Vert \Theta_{m',m}(f)\Vert\,\Vert\Theta_{\infty,n}(g)\Vert
\]
as long as $m' > m$. It follows, by taking the limit as $m'\to\infty$, that
\[
\Vert f\cdot\Theta_{\infty,n}(g)\Vert \leq \Vert\Theta_{\infty,m}(f)\Vert\,\Vert\Theta_{\infty,n}(g)\Vert
\]
by \cite[Proposition 4.16]{MawhinneyTodorov:DissertationesMath18}. Hence
\[
(\Theta_{\infty,m}(f),\Theta_{\infty,n}(g)) \mapsto f\cdot\Theta_{\infty,n}(g)=g\cdot\Theta_{\infty,m}(f)
\]
also defines a well-defined, commutative multiplication on $\mathcal{S}(\alpha^t,\beta)$.

That $\ddot{e}$ and $e$ become units of those algebra, respectively, are trivial. Also, the last assertion is clear now.
\end{proof}

\begin{Remark}\label{R3.17}
Propositions \ref{P3.15} and~\ref{P3.16} enable us to prove, by a standard method (see, e.g., \cite[Proposition 4.4 and Exercise 4.2]{BorodinOlshanski:Book}, that the so-called ring theorem holds for $\mathcal{S}(\alpha^t,\beta)$ if the left multiplicative structure $\iota_{m,n}$ has property $(c)$.
\end{Remark}

\subsection[Olshanski's representation ring of U(infty)]
{Olshanski's representation ring of $\boldsymbol{\mathrm{U}(\infty)}$}\label{S3.5}

Let $A_n = W^*(\mathrm{U}(n))$ be the group $W^*$-algebra of the unitary group $\mathrm{U}(n)$ of rank $n$, and consider the case when the flow $\alpha^t$ is the trivial one because no $q$-deformation is necessary and dealing with tracial states is suitable in this case (see Section 4). We will investigate the algebra $\Sigma := \Sigma(\alpha^t,\beta)$, the representation (operator) systems $\ddot{\mathcal{S}} = \ddot{\mathcal{S}}(\alpha^t,\beta)$ and $\mathcal{S}=\mathcal{S}(\alpha^t,\beta)$, all of which must be independent of the flow and the inverse temperature in the case. We also remark that every $E_n$ must be the center-valued trace in the present setup.

The consequence gives an operator algebraic point of view to Olshanski's rings $\mathscr{R}$ and \mbox{$\mathscr{R}/(\varphi-1)$} with the notation in \cite{Olshanski:AdvMath16}. Actually, we will show that $\Sigma$ and $\ddot{\mathcal{S}}$ are exactly the same as $\mathscr{R}$ and $\mathscr{R}/(\varphi-1)$, respectively. In what follows, we will freely use the notation in \cite[Sections 2 and 3.1--3.2]{Olshanski:AdvMath16}, but $n$ stands for the parameter of length of signature instead of $N$.

In the present case, we may and do identify $\mathfrak{Z}_n$ with the set $\mathbb{S}_n$ of all signatures of length~$n$. We will denote by $z_\lambda$ the corresponding minimal central projection for a $\lambda \in \mathbb{S}_n$. Then we have $\mathcal{Z}(A_n) \cong \ell^\infty(\mathbb{S}_n)$ by $z_\lambda \mapsto \delta_\lambda$, where $\delta_\lambda$ is the Dirac function taking $1$ at $\lambda$ and~$0$ elsewhere. By definition we also have $\mathscr{R}_n \cong \ell^\infty(\mathbb{S}_n)$, as normed spaces, by sending each $\psi = \sum_\lambda \psi(\lambda) \sigma_\lambda \in \mathscr{R}_n$ with $\psi(\lambda) \in \mathbb{C}$ to $(\psi(\lambda)/\dim(\lambda))_\lambda \in \ell^\infty(\mathbb{S}_n)$, where $\dim(\lambda)$ denotes the dimension of a~representation with the signature $\lambda$, so that $\dim(\lambda) := \dim(z_\lambda)$. Consequently, we have $\mathscr{R} = \sum_{n\geq 0} \mathscr{R}_n \cong \Sigma := \sum_{n\geq0} \mathcal{Z}(A_n)$ by $\sigma_\lambda \mapsto \hat{z}_\lambda := \dim(\lambda)^{-1}\cdot z_\lambda$ for each $\lambda \in \mathbb{S}_n$, where $\sum_{n\geq0}$ denotes the algebraic direct sum over $n \geq 0$.

We then investigate the multiplication on $\mathscr{R}$. For each $\lambda \in \mathbb{S}_{m+n}$ we have
\begin{equation}\label{Eq3.10}
s_\lambda(t_1,\dots,t_{m+n}) = \sum_{(\mu,\nu) \in \mathbb{S}_m\times\mathbb{S}_n}c(\lambda\mid\mu,\nu)\,s_\mu(t_1,\dots,t_m)\,s_\nu(t_{m+1},\dots,t_{m+n})
\end{equation}
with rational Schur functions $s_\lambda$. This indeed describes the expansion of the irreducible character of $\mathrm{U}(m+n)$ of label $\lambda$, whose restriction to the maximal torus is $s_\lambda$, restricted to $\mathrm{U}(m)\times\mathrm{U}(n)$, via the embedding
\begin{equation}\label{Eq3.11}
(g_1,g_2) \in \mathrm{U}(m)\times\mathrm{U}(n) \mapsto \begin{bmatrix} g_1 & \\ & g_2 \end{bmatrix} \in \mathrm{U}(m+n).
\end{equation}
With this quantity $c(\lambda\mid\mu,\nu)$, the multiplication on $\mathscr{R}$ is determined by
\begin{equation}\label{Eq3.12}
\sigma_\mu \sigma_\nu = \sum_{\lambda \in \mathbb{S}_{m+n}}c(\lambda\mid\mu,\nu)\,\sigma_\lambda.
\end{equation}
See \cite[equation (2.13)]{Olshanski:AdvMath16}.

It is time to specify the left multiplicative structure on the sequence $A_n = W^*(\mathrm{U}(n))$ that we are going to discuss. The embedding \eqref{Eq3.11} gives a left multiplicative structure $\iota_{m,n} \colon A_m\,\bar{\otimes}\,A_n \allowbreak\to A_{n+n}$. Note that the embedding is conjugate in $\mathrm{U}(m+n)$ to
\[
(g_1,g_2) \in \mathrm{U}(m)\times\mathrm{U}(n) \mapsto \begin{bmatrix} g_2 & \\ & g_1 \end{bmatrix} \in \mathrm{U}(m+n).
\]
This fact implies that the structure $\iota_{m,n}$ enjoys property $(c)$ since the $\alpha^t$ is assumed to be the trivial flow. Consequently, the resulting $\Sigma$, $\ddot{\mathcal{S}}$ and $\mathcal{S}$ are all unital commutative algebras by Proposition \ref{P3.16}.
We will look at the structure $\iota_{m,n}$ in some detail below.

For each $\lambda \in \mathbb{S}_n$, let $\pi_\lambda \colon \mathrm{U}(n) \curvearrowright \mathcal{H}_\lambda$ be the corresponding irreducible representation. Then $W^*(\mathrm{U}(n))$ is explicitly obtained as the $W^*$-algebra generated by $\bigoplus_{\lambda\in\mathbb{S}_n} \pi_\lambda(g)$, $g \in \mathrm{U}(n)$, on the Hilbert space $\bigoplus_{\lambda\in\mathbb{S}_n}\mathcal{H}_\lambda$. Thus, we may identify $\mathcal{H}_{z_\lambda} = \mathcal{H}_\lambda$, and $x \in W^*(\mathrm{U}(n)) \mapsto z_\lambda x \in z_\lambda W^*(\mathrm{U}(n)) = B(\mathcal{H}_\lambda)$ corresponds to the irreducible representation of label $\lambda$. It is natural to define $\iota_{m,n}$ by \eqref{Eq3.10} in such a way that
\[
z_\lambda \iota_{m,n}(x\otimes y)\ = \bigoplus_{(\mu,\nu)\in\mathbb{S}_m\times\mathbb{S}_n} (z_\mu x\otimes z_\nu y)^{\oplus c(\lambda\mid\mu,\nu)} \quad \text{(up to unitary equivalence)}.
\]
It follows that
\[
E_{m+n}(\iota_{m,n}(z_\mu \otimes z_\nu)) = \sum_{\lambda\in\mathbb{S}_{m+n}} c(\lambda\mid\mu,\nu)\frac{\dim(\mu)\cdot\dim(\nu)}{\dim(\lambda)}\,z_\lambda,
\]
implying that
\begin{equation}\label{Eq3.13}
(\hat{z}_\mu,\hat{z}_\nu) \mapsto E_{m+n}(\iota_{m,n}(\hat{z}_\mu \otimes\hat{z}_\nu)) = \sum_{\lambda\in\mathbb{S}_{m+n}} c(\lambda\mid\mu,\nu)\,\hat{z}_\lambda.
\end{equation}
Comparing \eqref{Eq3.12} with \eqref{Eq3.13} we observe that the vector space isomorphism $\mathscr{R} \cong \Sigma$ transplants the multiplication on $\mathscr{R}$ into that on $\Sigma$ determined by
\[
(x,y) \in \mathcal{Z}(A_m)\times\mathcal{Z}(A_n) \mapsto E_{m+n}(\iota_{m,n}(x\otimes y)) \in \mathcal{Z}(A_{m+n}),
\]
which is identical to the multiplication on $\Sigma$ introduced in Theorem \ref{T3.14}(1). Thus we have $\mathscr{R} \cong \Sigma$ as algebras.

Since $\varphi_k = \sigma_{(k)}$ by \cite[equation (2.9)]{Olshanski:AdvMath16} and $\dim((k))\equiv1$, we have $\varphi := \sum_{k\in\mathbb{Z}}\varphi_k \in \mathscr{R}_1 \subset \mathscr{R}$ and the isomorphism $\mathscr{R} \cong \Sigma$ sends $\varphi$ to $1_1$ and the principal ideal $(\varphi-1)$ generated by $\varphi-1$ to $\Sigma\cdot(1_1-1_0)$. Namely, the diagram
\[
\xymatrix{
\mathscr{R} \ar@{->>}[d] \ar@{->}[r]^{\cong} & \Sigma \ar@{->>}[d] \\
\mathscr{R}/(\varphi-1) \ar@{->>}[r]_{\cong} \ar@{->}[dd] & \ddot{\mathcal{S}} \ar@{->>}[d]^{q_\mathcal{S}} \ar@{}[ul]|{\circlearrowright} \\
& \mathcal{S} \ar@{->}[d]^{\Gamma\qquad\quad} \\
C(\widehat{\mathrm{U}(\infty)}) & C\big(\mathrm{ex}\big(S^\mathrm{ln}(\mathcal{S})\big)\big)
}
\]
commutes. Here $\Gamma \colon \mathcal{S} \to C\big(\mathrm{ex}\big(S^\mathrm{ln}(\mathcal{S})\big)\big)$ is defined by $\Gamma(s)(\omega) := \omega(s)$ for any $s \in \mathcal{S}$ and $\omega \in \mathrm{ex}\big(S^\mathrm{ln}(\mathcal{S})\big)$. (This map is multiplicative by Remark \ref{R3.17}.) This shows that the representation system $\ddot{\mathcal{S}}(\alpha^t,\beta)$ is a correct counterpart of representation ring. In this respect, we see that $\ddot{\mathcal{S}} \cong \mathcal{S}$, or other words, $q_\mathcal{S}$ is injective, if we take \cite[Proposition~3.9(iii)]{Olshanski:AdvMath16} into accounts with the help of $\widehat{\mathrm{U}(\infty)} \cong \mathrm{ex}\big(S^{\mathrm{ln}}(\mathcal{S})\big)$ thanks to Theorem~\ref{T3.11}. However, the mechanism behind this phenomenon is not so clear to us at the moment.

\begin{Remark}[{the infinite symmetric group $\mathfrak{S}_\infty = \varinjlim\mathfrak{S}_n$}]
The framework we have developed so far can also be used for the group $C^*$-algebra $C^*(\mathfrak{S}_\infty)$, which is the special case when $A_n = C^*(\mathfrak{S}_n) = \mathbb{C}\mathfrak{S}_n$, finite-dimensional $*$-algebras. Consider the tracial states on $C^*(\mathcal{S}_\infty)$ so that $\alpha^t$ must be trivial and all the $E_n$ become center-valued traces. By Remark \ref{R3.2}, $\ddot{\mathcal{S}} \cong K_0(C^*(\mathfrak{S}_\infty))\otimes_\mathbb{Z}\mathbb{C}$, and hence the study of $\ddot{\mathcal{S}}$ (or $\mathcal{S}$) is exactly the same as that of the dimension group of $C^*(\mathfrak{S}_\infty)$ due to Vershik--Kerov.
\end{Remark}

Consequently, the present framework generalizes both Vershik--Kerov's use of dimension groups as well as Olshanski's representation ring of $\mathrm{U}(\infty)$ in a unified way. Moreover, we will explain, at the end of this paper, how the framework is applied to the quantum group case.


\section{Quantum group setup} 

In this section, we apply the theory we have developed so far to inductive sequences of compact quantum groups. In fact, the notion of $(\alpha^t,\beta)$-spherical representations originally comes from a~consideration of quantum groups. A short comment on this was already given at \cite[Remark~9.4]{Ueda:Preprint20}.

\subsection{Compact quantum groups} \label{S5.1}

We first consider single compact quantum groups to fix the notation and to collect necessary facts on them.
To the best of our knowledge, no analytical attempts have been made to spherical unitary representations (rather than spherical functions) for (Gelfand) pairs $G < G\times G$ (via the diagonal embedding) even in the compact quantum group case.
The discussions on compact quantum groups below entirely follow Neshveyev--Tuset's book \cite[Chapters 1 and 2]{NeshveyevTuset:Book13} (a standard reference on the operator algebra side), but we sometimes use slightly different symbols, such as ``$\mathrm{id}$'' (instead of ``$\iota$'' there) denoting the identity map, from their book in order to keep the notation so far.

Let $G$ be a compact quantum group, which means a Hopf $*$-algebra $(\mathbb{C}[G],\Delta,S,\varepsilon)$ that satisfies the assumption of \cite[Theorem 1.6.7]{NeshveyevTuset:Book13}. Then, we have the ``dual Hopf $*$-algebra'' $\big(\mathcal{U}(G),\hat{\Delta},\hat{S},\hat{\varepsilon}\big)$, whose precise definition is given in the final paragraph of \cite[Section~1.6]{NeshveyevTuset:Book13}. Especially, one has to notice that the comultiplication $\hat{\Delta}$ is a $*$-homomorphism into $\mathcal{U}(G\times G)$ rather than $\mathcal{U}(G)\otimes\mathcal{U}(G)$, where $\mathcal{U}(G\times G)$ is the algebraic dual of the algebraic tensor product $\mathbb{C}[G]\otimes\mathbb{C}[G]$. The general principle of Hopf algebra duality suggests that $\mathcal{U}(G)$ plays a r\^{o}le of group algebra of $G$, and the ``group $W^*$-algebra $W^*(G)$'' should be defined inside $\mathcal{U}(G)$. A naive intuition is that $\hat{\Delta}(\mathcal{U}(G)) \subset \mathcal{U}(G\times G)$ is regarded as a quantum group analogue of $G < G\times G$. However, this pair of $*$-algebras is too big to discuss its unitary representation theory, and hence we develop the theory in terms of $W^*(G)$. We will clarify what $W^*(G)$ is by supplying a few facts, because \cite{NeshveyevTuset:Book13} does not touch it explicitly. The explanation below explicitly or implicitly uses Yamagami's idea \cite{Yamagami:CMP95}.

Any unitary representation $U \in B(\mathcal{H}_U)\otimes \mathbb{C}[G]$ on a finite-dimensional Hilbert space $\mathcal{H}_U$ gives a $*$-representation $\pi_U \colon \mathcal{U}(G) \curvearrowright \mathcal{H}_U$ by $\pi_U(x) := (\mathrm{id}\otimes x)(U)$ for any $x \in \mathcal{U}(G)$. Choose and fix representatives $U_z$, $z \in \mathfrak{Z}$, of the equivalence classes of irreducible unitary representations of $G$ throughout. The $U_z$, $z \in \mathfrak{Z}$, enables us to construct a bijective $*$-homomorphism
\begin{equation}\label{Eq5.1}
\Phi_{U_\bullet} \colon \quad \mathcal{U}(G) \stackrel{\cong}{\longrightarrow} \prod_{z\in\mathfrak{Z}} B(\mathcal{H}_{U_z}) \qquad \text{by}\quad x \mapsto (\pi_{U_z}(x))_{z\in\mathfrak{Z}}.
\end{equation}
One of the $U_z$ must be the trivial representation, and we denote by $\mathbbm{1}$ the corresponding $z$.

Here is the definition of group $W^*$-algebra $W^*(G)$ (and its natural $\sigma$-weakly dense $*$-subal\-gebra~$\mathcal{F}(G)$). Our spherical unitary representation theory will be constructed based on $W^*(G)$ rather than $\mathbb{C}[G]$.

\begin{Definition}[\cite{Yamagami:CMP95}]
The \emph{group $W^*$-algebra} $W^*(G)$ is defined to be all the $x \in \mathcal{U}(G)$ such that $\sup_{z\in\mathfrak{Z}} \Vert\pi_{U_z}(x)\Vert < +\infty$. Let $\mathcal{F}(G)$ be the $*$-subalgebra consisting of all $x \in \mathcal{U}(G)$ such that $\pi_{U_z}(x) = 0$ for all but finitely many $z$. The \emph{group $C^*$-algebra} $C^*(G)$ is defined to be the norm-closure of $\mathcal{F}(G)$.
\end{Definition}

Both $\mathcal{F}(G) \subset W^*(G)$ do not depend on the choice of representatives $U_z$, though the above definition itself does. Moreover, one easily sees that $x \in W^*(G)$ if and only if there exists a~$C > 0$ so that $\Vert \pi_U(x)\Vert \leq C$ for any finite-dimensional unitary representation $U$. Observe that
\begin{equation*}
W^*(G) \cong \bigoplus_{z\in\mathfrak{Z}} B(\mathcal{H}_{U_z})\qquad
\text{in the\ $W^*$-algebra\ or\ $\ell^\infty$-sense}
\end{equation*}
by the bijective $*$-homomorphism \eqref{Eq5.1}, and hence $W^*(G)$ is an atomic finite $W^*$-algebra. Similarly, $\mathcal{F}(G)$ is isomorphic to the algebraic direct sum $\sum_{z \in \mathfrak{Z}} B(\mathcal{H}_{U_z})$ by \eqref{Eq5.1} and becomes a $\sigma$-weakly dense (non-unital) $*$-subalgebra of $W^*(G)$. In what follows, we denote by~$z$ the central projection supporting the direct summand corresponding to each $z\in\mathfrak{Z}$ so that $z\,\mathcal{U}(G)=zW^*(G) =z\mathcal{F}(G) \cong B(\mathcal{H}_z)$ by \eqref{Eq5.1}.

Every $*$-representation $\pi_{U}\otimes\pi_{V} \colon \mathcal{U}(G)\otimes\mathcal{U}(G) \curvearrowright \mathcal{H}_{U}\otimes\mathcal{H}_{V}$ naturally extends to $\mathcal{U}(G\times G)$ by $x \mapsto (\mathrm{id}\otimes x)(U_{13}V_{24})$ with the leg notation ({\it n.b.}, $U_{13}V_{24}$ is the ``outer tensor product'' of $U$ and $V$). As before, we have a bijective $*$-homomorphism
\begin{gather*}
\Phi_{U_\bullet}^{(2)} \colon \ \ \mathcal{U}(G\times G) \stackrel{\cong}{\longrightarrow} \!\!\! \prod_{(z_1,z_2)\in\mathfrak{Z}\times\mathfrak{Z}}\!\!\!\! B\big(\mathcal{H}_{U_{z_1}}\otimes \mathcal{H}_{U_{z_2}}\big) \qquad \text{by}\quad x \mapsto \big((\pi_{U_{z_1}}\otimes\pi_{U_{z_2}})(x)\big)_{(z_1,z_2)\in\mathfrak{Z}\times\mathfrak{Z}}.
\end{gather*}
The tensor product $W^*$-algebra $W^*(G)\,\bar{\otimes}\,W^*(G)$ is $*$-isomorphic to
\begin{equation}\label{Eq5.5}
\bigoplus_{(z_1,z_2)\in\mathfrak{Z}\times\mathfrak{Z}} B\big(\mathcal{H}_{U_{z_1}}\otimes\mathcal{H}_{U_{z_2}}\big) \qquad
\text{in the\ $W^*$-algebra\ or\ $\ell^\infty$-sense},
\end{equation}
and identified with all the $x \in \mathcal{U}(G\times G)$ with $\sup_{(z_1,z_2)\in\mathfrak{Z}\times\mathfrak{Z}}\Vert (\pi_{U_{z_1}}\otimes\pi_{U_{z_2}}(x)\Vert < +\infty$. We denote by $W^*(G\times G)$ this $W^*$-subalgebra of $\mathcal{U}(G\times G)$.

Using
\begin{equation}\label{Eq5.6}
(\pi_{U}\otimes\pi_{V})\circ\hat{\Delta}(x)\,T = T\,\pi_{W}(x), \qquad x \in \mathcal{U}(G)\quad\text{and}\quad T \in \mathrm{Mor}(W, U\times V)
\end{equation}
(see the final paragraph of \cite[Section 1.6]{NeshveyevTuset:Book13}) one sees that
\begin{equation}\label{Eq5.7}
(\pi_{U}\otimes\pi_{V})\circ\hat{\Delta}(x)
=
\sum_{(T,W)} T\,\pi_W(x)\,T^*\ \bigg({\cong} \bigoplus_W \pi_W(x)\bigg), \qquad x \in \mathcal{U}(G),
\end{equation}
where the $T \in \mathrm{Mor}(W,U\times V)$ are selected by the (unique) irreducible decomposition $U\times V \cong \bigoplus_W W$. (See \cite{Yamagami:CMP95} too in this respect.) Hence, the restriction of $\hat{\Delta}$ to $W^*(G)$ defines a normal comultiplication $\hat{\Delta}\colon W^*(G) \to W^*(G\times G)$. We also remark
that
\begin{equation*}
\pi_z(\hat{\varepsilon}(x)) =
\begin{cases}
x(1) = \pi_z(x), & z=\mathbbm{1}, \\
0, &\text{otherwise}.
\end{cases}
\end{equation*}
It immediately follows
that the restriction of $\hat{\varepsilon}$ to $W^*(G)$ gives a normal $*$-character.

There is a special positive element $\rho \in \mathcal{U}(G)$ so that
\begin{equation}\label{Eq5.9}
\hat{\Delta}(\rho) = \rho\otimes\rho, \qquad \hat{S}(\rho) = \rho^{-1}
\end{equation}
and $\hat{R}(x) := \rho^{-1/2}\hat{S}(x)\rho^{1/2}$ for all $x \in \mathcal{U}(G)$ defines the unitary antipode on $\mathcal{U}(G)$. See \cite[Section 1.7]{NeshveyevTuset:Book13} for these facts and the interpretation of functional calculus $\rho^{\zeta} = \exp(\zeta \log\rho)$, \mbox{$\zeta \in \mathbb{C}$}. We have an action $\vartheta^{\zeta} := \mathrm{Ad}\rho^{{\rm i}\zeta} \colon \mathbb{C} \curvearrowright \mathcal{U}(G)$. Since $\rho^{{\rm i}t}$ falls in $W^*(G)$, it induces a flow $\vartheta^t \colon \mathbb{R} \curvearrowright W^*(G)$, which is clearly continuous in the $u$-topology (see, e.g., \cite[Lemma 7.1]{Ueda:Preprint20} for the topology) and fixes elements of the center $\mathcal{Z}(W^*(G))$. Since the second formula in \eqref{Eq5.9} leads~to
\begin{equation}\label{Eq5.10}
\hat{R}(\rho) = \rho^{-1/2}\hat{S}(\rho)\rho^{1/2} = \rho^{-1},
\end{equation}
we have
\begin{equation}\label{Eq5.11}
\hat{R}\circ\vartheta^t = \vartheta^t\circ\hat{R}
\end{equation}
for all $t \in \mathbb{R}$. It follows that
\begin{equation}\label{Eq5.12}
\hat{S}(x) = \vartheta^{-{\rm i}/2}\circ\hat{R}(x) = \hat{R}\circ\vartheta^{-{\rm i}/2}(x), \qquad x \in \mathcal{U}(G).
\end{equation}

Using \cite[Example 2.2.22]{NeshveyevTuset:Book13} we see that
$\hat{R}$ defines an involutive $*$-anti-automorphism on $W^*(G)$ and that $\hat{S}(\mathcal{F}(G)) = \mathcal{F}(G)$ (but $\hat{S}(W^*(G)) \subseteq W^*(G)$ does not hold in general). Moreover, both~$\hat{S}$ and $\hat{R}$ send each direct summand $z\,\mathcal{U}(G) = zW^*(G) \cong B(\mathcal{H}_{U_z})$ to the one $\bar{z}\,\mathcal{U}(G)=\bar{z}W^*(G)\cong B(\mathcal{H}_{U_{\bar{z}}})$ by \eqref{Eq5.1}, where $\bar{z} \in \mathfrak{Z}$ is a unique element such that $U_{\bar{z}}$ is (unitarily) equivalent to the conjugate representation $\bar{U}_z$ of $U_z$ (see \cite[Definition~1.4.5]{NeshveyevTuset:Book13}). In this way, we have an involutive bijection $z \mapsto \bar{z}$ on $\mathfrak{Z}$ so that
\begin{equation}\label{Eq5.14}
\hat{S}(\bar{z}) = \hat{R}(\bar{z}) = z 
\end{equation}
holds for any $z \in \mathfrak{Z}$ as a minimal central projection of $W^*(G)$.

Consequently, we have obtained a sextuplet $\big(W^*(G),\hat{\Delta},\hat{R},\rho^{{\rm i}t},\vartheta^t,\hat{\varepsilon}\big)$ that enjoys all the pro\-per\-ties of \cite[Definition 1.2]{Yamagami:CMP95} with encoding $M := W^*(G)$, $\varDelta := \hat{\Delta}$, $\tau := \hat{R}$, $u_t := \rho^{{\rm i}t/2}$, $\theta_t := \vartheta^{t/2}$, $\varepsilon := \hat{\varepsilon}$ and $\mathscr{M} := \mathcal{F}(G)$ in the notation there. As remarked there, this is a specialized (and even strengthened) version of Masuda--Nakagami's formalism \cite{MasudaNakagami:PRIMS94} so that there are no (essential) differences between Sato's understanding of compact quantum groups in \cite{Sato:JFA19,Sato:preprint19,Sato:ETDS2x} and ours. Thus, his works are available here and so is the present work in his framework.

Here is a technical lemma.

\begin{Lemma}\label{L5.2} Let $\Pi \colon W^*(G)\otimes W^*(G) \curvearrowright \mathcal{H}_\Pi$ be a bi-normal $*$-representation. Then there are a unique bi-normal $*$-representation $\overline{\Pi} \colon W^*(G)\otimes_{\max}W^*(G) \curvearrowright \mathcal{H}_\Pi$ and a unique normal $*$-representations $\underline{\Pi} \colon W^*(G\times G) \curvearrowright \mathcal{H}_\Pi$ such that
the diagram
\[
\xymatrix{
W^*(G)\otimes_{\max}W^*(G) \ar[rd]^{\overline{\Pi}} & \\
W^*(G)\otimes W^*(G) \ar@{^{(}-_>}[u] \ar@{^{(}-_>}[d] \ar[r]^{\quad \Pi} & B(\mathcal{H}_\Pi) \\
W^*(G\times G) \ar[ru]_{\underline{\Pi}} &
}
\]
commutes, where the vertical arrows mean the natural embeddings thanks to $W^*(G\times G) = W^*(G)\,\bar{\otimes}\,W^*(G)$ for the lower one.
\end{Lemma}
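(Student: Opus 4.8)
The plan is to reduce the statement to a pair of commuting normal unital $*$-representations of $W^*(G)$ and then to exploit the atomic structure $W^*(G)\cong\bigoplus_{z\in\mathfrak{Z}}B(\mathcal{H}_z)$. First I would put $\pi_1(a):=\Pi(a\otimes 1)$ and $\pi_2(b):=\Pi(1\otimes b)$ for $a,b\in W^*(G)$; since $W^*(G)$ is unital and $\Pi$ is a unital $*$-homomorphism on the algebraic tensor product, $\pi_1$ and $\pi_2$ are commuting unital $*$-representations of $W^*(G)$ on $\mathcal{H}_\Pi$ with $\Pi(a\otimes b)=\pi_1(a)\pi_2(b)$, and bi-normality of $\Pi$ means precisely that $\pi_1$ and $\pi_2$ are normal. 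The universal property of the maximal $C^*$-tensor product (see \cite{BrownOzawa:Book}) applied to the commuting pair $(\pi_1,\pi_2)$ then produces a $*$-representation $\overline{\Pi}\colon W^*(G)\otimes_{\max}W^*(G)\to B(\mathcal{H}_\Pi)$ with $\overline{\Pi}(a\otimes b)=\pi_1(a)\pi_2(b)$; it extends $\Pi$, it is unique by norm-density of $W^*(G)\otimes W^*(G)$ in $W^*(G)\otimes_{\max}W^*(G)$, and it is bi-normal because its restrictions to the two (already norm-complete) copies of $W^*(G)$ are just $\pi_1$ and $\pi_2$.

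For $\underline{\Pi}$ I would use atomicity, and this is where the real content lies. Since $\pi_1,\pi_2$ are normal and $\{z\}_{z\in\mathfrak{Z}}$ is an orthogonal family of central projections of $W^*(G)$ summing $\sigma$-strongly to $1$, the projections $p_{z_1,z_2}:=\pi_1(z_1)\pi_2(z_2)$, $(z_1,z_2)\in\mathfrak{Z}\times\mathfrak{Z}$, are pairwise orthogonal and sum $\sigma$-strongly to $1$, so $\mathcal{H}_\Pi=\bigoplus_{(z_1,z_2)}p_{z_1,z_2}\mathcal{H}_\Pi$. On each summand, $\Pi$ restricts to a bi-normal $*$-representation of $z_1W^*(G)\otimes z_2W^*(G)\cong B(\mathcal{H}_{z_1})\otimes B(\mathcal{H}_{z_2})$, a tensor product of type~I factors; since a normal representation of a type~I factor is (unitarily) a multiple of the identity representation, a standard commutant argument identifies $p_{z_1,z_2}\mathcal{H}_\Pi$ with $\mathcal{H}_{z_1}\otimes\mathcal{H}_{z_2}\otimes N_{z_1,z_2}$ in such a way that $\Pi(x\otimes y)$ acts as $x\otimes y\otimes 1$, and this plainly extends uniquely to a normal representation of $B(\mathcal{H}_{z_1}\otimes\mathcal{H}_{z_2})=z_1W^*(G)\,\bar{\otimes}\,z_2W^*(G)$. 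Assembling these over $(z_1,z_2)$ and invoking the identification $W^*(G\times G)=W^*(G)\,\bar{\otimes}\,W^*(G)\cong\bigoplus_{(z_1,z_2)}B(\mathcal{H}_{z_1}\otimes\mathcal{H}_{z_2})$ from \eqref{Eq5.5}, one obtains a normal $*$-representation $\underline{\Pi}$ of $W^*(G\times G)$ extending $\Pi$; uniqueness follows since $W^*(G)\otimes W^*(G)$ is $\sigma$-weakly dense in $W^*(G\times G)$ and $\underline{\Pi}$ is required to be normal. Commutativity of the triangle is built into both constructions.

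The step I expect to carry the burden is precisely the normality of $\underline{\Pi}$ on all of $W^*(G\times G)$: for a general $W^*$-algebra $M$, a bi-normal $*$-representation of $M\otimes M$ need not extend to a normal $*$-representation of $M\,\bar{\otimes}\,M$ (equivalently, $M\otimes_{\max}M$ does not in general factor through $M\,\bar{\otimes}\,M$, since $M=W^*(G)$ is typically not nuclear), so one genuinely has to use that $W^*(G)$ is a direct sum of type~I factors. Everything else — the universal property giving $\overline{\Pi}$, the density arguments for uniqueness, and the commutativity of the diagram — is routine.
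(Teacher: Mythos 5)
Your argument is correct, and it follows the same overall route as the paper: $\overline{\Pi}$ is immediate from the universal property of $\otimes_{\max}$, and $\underline{\Pi}$ is assembled blockwise over the minimal central projections $z_1\otimes z_2$ of $W^*(G\times G)$, with bi-normality giving both the orthogonal decomposition $\mathcal{H}_\Pi=\bigoplus_{(z_1,z_2)}\Pi(z_1\otimes z_2)\mathcal{H}_\Pi$ and the agreement with $\Pi$ after summation, and uniqueness following from the same density arguments. Where you differ is the blockwise step: the paper uses that each $\mathcal{H}_z$ is finite-dimensional, so that $(z_1\otimes z_2)W^*(G\times G)=(z_1\otimes z_2)(W^*(G)\otimes W^*(G))$ and the block representation can simply be defined as $x\mapsto\Pi((z_1\otimes z_2)x)$, normality being automatic on a finite-dimensional algebra; you instead invoke the multiplicity theory of normal representations of type I factors together with a commutant argument to identify the block with $\mathcal{H}_{z_1}\otimes\mathcal{H}_{z_2}\otimes N_{z_1,z_2}$ and extend by $T\mapsto T\otimes 1$. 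Your version is heavier than needed in this lemma, but it buys generality: it works verbatim when the blocks are infinite-dimensional, i.e., for bi-normal representations over arbitrary atomic $W^*$-algebras (the setting of Section \ref{S2}), whereas the paper's shortcut genuinely uses $\dim\mathcal{H}_z<\infty$. One small correction to your closing commentary: the relevant obstruction in the general $W^*$-setting is not nuclearity of $W^*(G)$ as a $C^*$-algebra (it is indeed non-nuclear here, yet the lemma holds), but the absence of the atomic/type I structure; for instance, for a non-semidiscrete factor $M$ in standard form the bi-normal commutation representation of $M\otimes M'$ admits no normal extension to $M\,\bar{\otimes}\,M'$, since such an extension would make it continuous for the minimal norm, contradicting Effros--Lance. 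So your identification of where the burden lies is right, though the nuclearity parenthesis is not the right invariant.
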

\begin{proof} The existence of desired $\overline{\Pi}$ is trivial by the construction of maximal $C^*$-tensor products.

Observe that the family $z_1\otimes z_2$ with $z_1,z_2 \in \mathfrak{Z}$ is a complete family of minimal projections of $\mathcal{Z}(W^*(G\times G))$ thanks to \eqref{Eq5.5}. Since $zW^*(G) \cong B(\mathcal{H}_z)$ with $\dim(\mathcal{H}_z) < +\infty$ for every $z \in \mathfrak{Z}$, we have
\[
W^*(G\times G) \twoheadrightarrow (z_1\otimes z_2)W^*(G\times G)
= (z_1\otimes z_2)(W^*(G)\otimes W^*(G))
\]
for every $z_1, z_2 \in \mathfrak{Z}$. Hence we have a normal $*$-representation ${}_{z_1}\Pi_{z_2} \colon W^*(G\times G) \curvearrowright {}_{z_1}(\mathcal{H}_\Pi)_{z_2} := \Pi(z_1\otimes z_2)\mathcal{H}_\Pi$ defined by ${}_{z_1}\Pi_{z_2}(x) := \Pi((z_1\otimes z_2)x)$ for all $x \in W^*(G\times G)$.

Since $\Pi$ is bi-normal, $\mathcal{H}_\Pi = \bigoplus_{(z_1,z_2)\in\mathfrak{Z}\times\mathfrak{Z}} {}_{z_1}(\mathcal{H}_\Pi)_{z_2}$. Consequently, we obtain a normal $*$-representation
\[
\big(\underline{\Pi} \colon W^*(G\times G) \curvearrowright \mathcal{H}_\Pi\big) := \bigoplus_{(z_1,z_2)\in\mathfrak{Z}\times\mathfrak{Z}} \big({}_{z_1}\Pi_{z_2} \colon W^*(G\times G) \curvearrowright {}_{z_1}(\mathcal{H}_\Pi)_{z_2}\big).
\]
By construction, $\underline{\Pi}$ clearly agrees with $\Pi$ on $W^*(G)\otimes W^*(G)$ thanks to the bi-normality of $\Pi$. Hence we are done.
\end{proof}

In what follows, we call a bi-normal $*$-representation $\Pi \colon W^*(G)\otimes W^*(G) \curvearrowright \mathcal{H}_\Pi$ \emph{a unitary representation of $G\times G$}. Then we have a unique bi-normal $*$-representations $\overline{\Pi} \colon W^*(G)\otimes_{\max}W^*(G) \curvearrowright \mathcal{H}_\Pi$ and a unique normal $*$-representations $\underline{\Pi} \colon W^*(G\times G) \curvearrowright \mathcal{H}_\Pi$ as in the above lemma.

\begin{Definition}\label{D5.3} A unitary representation $\Pi$ of $G\times G$ on a Hilbert space $\mathcal{H}_\Pi$ with a unit vector $\xi \in \mathcal{H}_\Pi$ is called a \emph{spherical unitary representation} of $G < G\times G$, if $\xi$ is cyclic for $\underline{\Pi}(W^*(G\times G))$ and $\underline{\Pi}\big(\hat{\Delta}(x)\big)\xi=\hat{\varepsilon}(x)\xi$ holds for every $x \in W^*(G)$. We call such a vector a \emph{spherical vector} in~$\mathcal{H}_\Pi$ of $G < G\times G$ through $\Pi$.
\end{Definition}

The next proposition, which is motivated by \cite[Proposition 3.4]{KolbStokman:SelMath09}, shows that the notion of spherical unitary representations of $G < G\times G$ is exactly equivalent to that of $(\vartheta^t,-1)$-spherical representations of $W^*(G)$.

\begin{Proposition}\label{P5.4} For a unitary representation $\Pi$ of $G\times G$ on $\mathcal{H}_\Pi$ and a vector $\xi \in \mathcal{H}_\Pi$, the following are equivalent:
\begin{itemize}\itemsep=0pt
\item[$(i)$] $\underline{\Pi}\big(\hat{\Delta}(x)\big)\xi = \hat{\varepsilon}(x)\xi$ for all $x \in W^*(G)$.
\item[$(ii)$] $\Pi\big(\hat{S}(x)\otimes1\big)\xi=\Pi(1\otimes x)\xi$ for all $x \in \mathcal{F}(G)$.
\item[$(iii)$] $\xi$ is a $(\vartheta^t,-1)$-spherical vector in the bi-normal $*$-representation $\overline{\Pi}^{\mathrm{rop}}$ of $W^*(G)\otimes_\mathrm{max} W^*(G)^\mathrm{op}$ on $\mathcal{H}_\Pi$ defined to be
\[
\begin{matrix}
W^*(G)\otimes_\mathrm{max} W^*(G)^\mathrm{op}
& \overset{\mathrm{id}\,\otimes\,\hat{R}}{\to} &
W^*(G)\otimes_\mathrm{max} W^*(G)
&\overset{\overline{\Pi}}{\to} &
B(\mathcal{H}_\Pi), \\
x\otimes y^\mathrm{op}
& \mapsto &
x\otimes\hat{R}(y)
& \mapsto &
\overline{\Pi}\big(x\otimes\hat{R}(y)\big).
\end{matrix}
\]
See {\rm \cite[Definition~5.1, Remark and Definition~5.2]{Ueda:Preprint20}} for the notion of $(\vartheta^t,-1)$-spherical vectors.
\end{itemize}
Moreover, under the above equivalent conditions, $\Pi(z_1\otimes z_2)\xi$ must be zero if $z_1 \neq \bar{z}_2$.
\end{Proposition}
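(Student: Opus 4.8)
The plan is to prove the equivalences in the order (i) $\Leftrightarrow$ (ii) $\Leftrightarrow$ (iii) and then to read off the last assertion from (ii). As a preliminary reduction I would first note that it suffices to verify (i) for $x\in\mathcal{F}(G)$: the map $x\mapsto\big(\underline{\Pi}(\hat{\Delta}(x))-\hat{\varepsilon}(x)1\big)\xi$ is continuous from the $\sigma$-weak topology on $W^*(G)$ to the weak topology on $\mathcal{H}_\Pi$ (the restriction of $\hat{\Delta}$ to $W^*(G)$, together with $\hat{\varepsilon}$ and $\underline{\Pi}$, are normal, and $\Pi$ is bi-normal), so its zero set is $\sigma$-weakly closed and hence is all of $W^*(G)$ once it contains the $\sigma$-weakly dense subalgebra $\mathcal{F}(G)$.

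For (i) $\Leftrightarrow$ (ii) I would argue block by block. Using $W^*(G\times G)=\bigoplus_{(z_1,z_2)}B(\mathcal{H}_{U_{z_1}}\otimes\mathcal{H}_{U_{z_2}})$ and the bi-normality of $\Pi$, decompose $\mathcal{H}_\Pi=\bigoplus_{(z_1,z_2)}\Pi(z_1\otimes z_2)\mathcal{H}_\Pi$, on the $(z_1,z_2)$-summand of which $\underline{\Pi}$ is a multiple of $\pi_{U_{z_1}}\otimes\pi_{U_{z_2}}$; write $\xi_{(z_1,z_2)}:=\Pi(z_1\otimes z_2)\xi$. On this finite-dimensional block $(z_1\otimes z_2)\hat{\Delta}(x)$ is an honest finite sum of elementary tensors, and \eqref{Eq5.6}--\eqref{Eq5.7} show that a vector $v$ in the block satisfies $\big(\pi_{U_{z_1}}\otimes\pi_{U_{z_2}}\big)(\hat{\Delta}(x))v=\hat{\varepsilon}(x)v$ for all $x$ exactly when $v$ lies in the range of $\mathrm{Mor}(\mathbbm{1},U_{z_1}\times U_{z_2})$, which is $\{0\}$ if $z_2\neq\bar{z}_1$ and one-dimensional (spanned by the standard $\mathbbm{1}$-to-$U_{z_1}\times U_{\bar{z}_1}$ intertwiner) if $z_2=\bar{z}_1$. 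Thus (i) is equivalent to: $\xi_{(z_1,z_2)}=0$ for $z_2\neq\bar{z}_1$, and $\xi_{(z,\bar{z})}$ is $\hat{\Delta}$-invariant. The conjugate equations relating $\pi_{U_{\bar z}}$ to $\pi_{U_z}\circ\hat{S}$ (see \cite[Chapters~1 and~2]{NeshveyevTuset:Book13}) yield the same reformulation of (ii): restricting $\Pi(\hat{S}(x)\otimes1)\xi=\Pi(1\otimes x)\xi$ to the $(z_1,z_2)$-block and testing with $x=z_2$, where $\hat{S}(z_2)=\bar{z}_2$ by \eqref{Eq5.14}, forces $\xi_{(z_1,z_2)}=0$ unless $z_2=\bar{z}_1$, while on the surviving blocks the block-wise identity is precisely $\hat{\Delta}$-invariance of $\xi_{(z,\bar{z})}$. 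Hence (i) $\Leftrightarrow$ (ii).

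For (ii) $\Leftrightarrow$ (iii) I would unwind the definition of a $(\vartheta^t,-1)$-spherical vector. Each block $zW^*(G)$ is finite-dimensional and $\vartheta^t$-invariant, so $\mathcal{F}(G)$ is a $\sigma$-weakly dense $*$-subalgebra consisting of $\vartheta^t$-analytic elements with $\hat{S}(\mathcal{F}(G))=\mathcal{F}(G)$. Using $\overline{\Pi}^{\mathrm{rop}}(1\otimes y^{\mathrm{op}})=\overline{\Pi}(1\otimes\hat{R}(y))$ and that $\overline{\Pi}$ restricts to $\Pi$ on $\mathcal{F}(G)\otimes\mathcal{F}(G)$, the defining intertwining relation of \cite{Ueda:Preprint20} at inverse temperature $-1$, evaluated on $a\in\mathcal{F}(G)$, becomes $\Pi(a\otimes1)\xi=\Pi\big(1\otimes\hat{R}(\vartheta^{\mathrm{i}/2}(a))\big)\xi$; since \eqref{Eq5.11}, \eqref{Eq5.12} and $\hat{S}^2=\vartheta^{-\mathrm{i}}$ give $\hat{R}\circ\vartheta^{\mathrm{i}/2}=\hat{S}^{-1}$, this reads $\Pi(a\otimes1)\xi=\Pi\big(1\otimes\hat{S}^{-1}(a)\big)\xi$, i.e.\ exactly (ii) after the substitution $a=\hat{S}(x)$. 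Passing between ``all $\vartheta^t$-analytic elements'' and ``all of $\mathcal{F}(G)$'' uses the same $\sigma$-weak-closedness argument as above; the only delicate point is the bookkeeping of the $\beta=-1$ sign conventions of \cite{Ueda:Preprint20}, and even if it produces $\hat{S}$ in place of $\hat{S}^{-1}$ the conclusion still follows by combining with the equivalence (i)$\Leftrightarrow$(ii) just established.

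Finally, the last assertion is immediate from (ii): taking $x=z_2\in\mathfrak{Z}$ and using $\hat{S}(z_2)=\bar{z}_2$ from \eqref{Eq5.14}, (ii) reads $\Pi(\bar{z}_2\otimes1)\xi=\Pi(1\otimes z_2)\xi$; applying $\Pi(z_1\otimes1)$ and using $z_1\bar{z}_2=0$ for $z_1\neq\bar{z}_2$ gives $\Pi(z_1\otimes z_2)\xi=0$. I expect the step (i)$\Leftrightarrow$(ii) to be the main obstacle: the formal Hopf-algebra reasoning (counit and antipode axioms, conjugate equations) has to be reconciled with the fact that $\hat{\Delta}$ maps $W^*(G)$ into $W^*(G\times G)$ rather than into the algebraic tensor product $W^*(G)\otimes W^*(G)$, and this is exactly what forces the block-wise reduction and the use of \eqref{Eq5.6}--\eqref{Eq5.7} in place of Sweedler-type manipulations.
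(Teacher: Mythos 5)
Your proposal is correct in substance, and for the core equivalence $(i)\Leftrightarrow(ii)$ it takes a genuinely different route from the paper. The paper argues purely Hopf-algebraically: for $(i)\Rightarrow(ii)$ it first derives $1\otimes x=(m\otimes\mathrm{id})\big(\big(\hat{S}\otimes\mathrm{id}\otimes\mathrm{id}\big)\big(\big(\mathrm{id}\otimes\hat{\Delta}\big)\big(\hat{\Delta}(x)\big)\big)\big)$ and then applies the hypothesis via Sweedler's notation, justified by cutting with $\bar{z}_1\otimes 1$ so that $(\bar{z}_1\otimes1)\hat{\Delta}(x)$ lies in $\mathcal{F}(G)\otimes W^*(G)$; for $(ii)\Rightarrow(i)$ it computes $x_{(1)}\hat{S}(x_{(2)})=\delta_{z_1,\bar{z}_2}\hat{\varepsilon}(x)\bar{z}_2$ blockwise, which yields the final assertion along the way. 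You instead decompose $\mathcal{H}_\Pi$ over the $(z_1,z_2)$-blocks and identify both $(i)$ and $(ii)$ with the vanishing of $\xi_{(z_1,z_2)}$ for $z_2\neq\bar{z}_1$ plus membership of the surviving components in the (amplified) one-dimensional space of invariant vectors; this is closer in spirit to the classification theorem the paper proves immediately afterwards, and your argument essentially subsumes it. The trade-off: the paper's computation is self-contained modulo the finite-dimensionality of blocks, whereas your route makes the representation-theoretic content (trivial isotypic component, conjugate equations) transparent, but it silently relies on the fact that on an irreducible block the solution space of $\big(\pi_{U_{\bar{z}}}(\hat{S}(x))\otimes1\big)v=\big(1\otimes\pi_{U_z}(x)\big)v$ coincides with the line of $\hat{\Delta}$-fixed vectors spanned by $R_{U_z}(1)$; to close this you need the characterization $(T^\vee\otimes1)R_U=(1\otimes T)R_U$ of $T\mapsto T^\vee$ from \cite[Example~2.2.23]{NeshveyevTuset:Book13} together with a Schur-lemma one-dimensionality argument --- exactly the ingredients the paper defers to the subsequent theorem. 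Your derivation of the last assertion (test $x=z_2$ in $(ii)$ and use \eqref{Eq5.14}) is a slightly more direct variant of the paper's.

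One point to correct in $(ii)\Leftrightarrow(iii)$: the fallback remark that, should the sign bookkeeping produce $\hat{S}$ in place of $\hat{S}^{-1}$, the conclusion would still follow by combining with $(i)\Leftrightarrow(ii)$, is not legitimate. The condition $\Pi(x\otimes1)\xi=\Pi\big(1\otimes\hat{S}(x)\big)\xi$ is the co-opposite, $(\vartheta^t,+1)$-spherical condition of Remarks~\ref{R5.6}(3), not a reformulation of $(ii)$; the sign is precisely what distinguishes $\beta=-1$ from $\beta=+1$. Fortunately your actual computation $\hat{R}\circ\vartheta^{{\rm i}/2}=\hat{S}^{-1}$, which follows from \eqref{Eq5.11} and \eqref{Eq5.12}, is the correct one and agrees with the paper, which likewise passes from $\mathcal{F}(G)$ to the full spherical-vector condition by the density/Phragm\'en--Lindel\"of approximation you indicate.
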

\begin{proof}
 $(i)\Rightarrow (ii)$: Let $x \in \mathcal{F}(G)$ be arbitrarily given. Then we have
\begin{align*}
\big(m\circ\big(\hat{S}\otimes\mathrm{id}\big)\otimes\mathrm{id}\big)((\hat{\Delta}\otimes\mathrm{id})(\hat{\Delta}(x))))(a\otimes b)
&=
\big(\big(\hat{\Delta}\otimes\mathrm{id}\big)\big(\hat{\Delta}(x)\big)\big)((S\otimes\mathrm{id})(\Delta(a))\otimes b) \\
&=
\hat{\Delta}(x)(m((S\otimes\mathrm{id})(\Delta(a)))\otimes b) \\
&=
x(m((S\otimes\mathrm{id})(\Delta(a)))b) \\
&=
\varepsilon(a)x(b) = (1\otimes x)(a\otimes b)
\end{align*}
for all $a,b \in \mathbb{C}[G]$ with the multiplication map $m$. Thus,
we have
\begin{align*}
1\otimes x&=(m\otimes\mathrm{id})\big(\big(\hat{S}\otimes\mathrm{id}\otimes\mathrm{id}\big) \big(\big(\hat{\Delta}\otimes\mathrm{id}\big)\big(\hat{\Delta}(x)\big)\big)\big)
\\
&=(m\otimes\mathrm{id})\big(\big(\hat{S}\otimes\mathrm{id}\otimes\mathrm{id}\big) \big(\big(\mathrm{id}\otimes\hat{\Delta}\big)\big(\hat{\Delta}(x)\big)\big)\big).
\end{align*}
Observe that
\begin{align*}
&(z_1\otimes z_2)(m\otimes\mathrm{id})\big(\big(\hat{S}\otimes\mathrm{id}\otimes\mathrm{id}\big)\big(\big(\mathrm{id}\otimes\hat{\Delta}\big)\big(\hat{\Delta}(x)\big)\big)\big) \\
&\qquad=
(z_1\otimes z_2)(m\otimes\mathrm{id})\big(\big( \hat{S}\otimes\mathrm{id} \otimes\mathrm{id}\big)\big(\big(\mathrm{id}\otimes\hat{\Delta}\big) \big((\bar{z}_1\otimes1)\hat{\Delta}(x)\big)\big)\big)
\end{align*}
holds for any $z_1, z_2 \in \mathfrak{Z}$ thanks to \eqref{Eq5.14}. Since $(\bar{z}_1\otimes1)\hat{\Delta}(x) \in (\bar{z}_1\otimes1)W^*(G\times G) = (\bar{z}_1W^*(G))\otimes W^*(G) \cong B(\mathcal{H}_{\bar{z}_1})\otimes W^*(G)$ and $\dim(\bar{z}_1) < +\infty$, we can justify Sweedler's notation $(\bar{z}_1\otimes1)\hat{\Delta}(x) = x_{(1)}\otimes x_{(2)}$ in $\mathcal{F}(G)\otimes W^*(G)$. Then we have
\begin{align*}
\Pi(1\otimes x)\xi
&=
\sum_{(z_1,z_2) \in \mathfrak{Z}\times\mathfrak{Z}}
\Pi((z_1\otimes z_2)(1\otimes x))\xi \\
&=
\sum_{(z_1,z_2) \in \mathfrak{Z}\times\mathfrak{Z}}
\Pi\big((z_1\otimes z_2)(m\otimes\mathrm{id})\big(\big(\hat{S}\otimes\mathrm{id}\otimes\mathrm{id}\big)\big(\big(\mathrm{id}\otimes\hat{\Delta}\big) \big((\bar{z}_1\otimes1)\hat{\Delta}(x)\big)\big)\big)\big)\xi \\
&=
\sum_{(z_1,z_2) \in \mathfrak{Z}\times\mathfrak{Z}}\Pi\big((z_1\otimes z_2)\big(\hat{S}(x_{(1)})\otimes1\big)\hat{\Delta}(x_{(2)})\big)\xi \\
&=
\sum_{(z_1,z_2) \in \mathfrak{Z}\times\mathfrak{Z}}\Pi \big((z_1\otimes z_2)\big(\hat{S}(x_{(1)})\otimes1\big)\big)\underline{\Pi}\big(\hat{\Delta}(x_{(2)})\big) \xi \\
&=
\sum_{(z_1,z_2) \in \mathfrak{Z}\times\mathfrak{Z}}\Pi\big((z_1\otimes z_2)\big(\hat{S}(\hat{\varepsilon}(x_{(2)})x_{(1)})\otimes1\big)\big) \xi \quad \text{(by assumption)} \\
&=
\sum_{(z_1,z_2) \in \mathfrak{Z}\times\mathfrak{Z}}\Pi\big((z_1\otimes z_2)\big(\hat{S}\big((\mathrm{id}\otimes\hat{\varepsilon})\big((\bar{z}_1\otimes1)\hat{\Delta}(x)\big)\big)\otimes1\big)\big)\xi \\
&=
\sum_{(z_1,z_2) \in \mathfrak{Z}\times\mathfrak{Z}}\Pi\big((z_1\otimes z_2)\big(\hat{S}(x)\otimes1\big)\big)\xi \quad (\text{by Hopf $*$-algebra structure and \eqref{Eq5.12}}) \\
&=
\Pi\big(\hat{S}(x)\otimes1\big)\xi,
\end{align*}
where the final equality is justified thanks to $\hat{S}(x) \in \mathcal{F}(G) \subset W^*(G)$ due to the assumption that $x \in \mathcal{F}(G)$.

$(ii)\Rightarrow (i)$: Let $x \in W^*(G)$ be arbitrarily chosen. For each pair $(z_1,z_2) \in \mathfrak{Z}\times\mathfrak{Z}$, we have $(z_1\otimes z_2)\hat{\Delta}(x) = x_{(1)}\otimes x_{(2)}$ in $\mathcal{F}(G)\otimes\mathcal{F}(G)$ as above. Then we have
\begin{align*}
\Pi\big((z_1\otimes z_2)\hat{\Delta}(x)\big)\xi
&=
\Pi((z_1\otimes z_2)(x_{(1)}\otimes x_{(2)}))\xi \\
&=
\Pi((z_1\otimes z_2)(x_{(1)}\otimes1))\Pi(1\otimes x_{(2)})\xi \\
&=
\Pi((z_1\otimes z_2)(x_{(1)}\otimes1))\Pi\big(\hat{S}(x_{(2)})\otimes1\big)\xi \quad \text{(by assumption)}\\
&=
\Pi\big((z_1\otimes z_2)\big(\big(x_{(1)}\hat{S}(x_{(2)})\big)\otimes1\big)\big)\xi.
\end{align*}
Observe that
\[
x_{(1)}\hat{S}(x_{(2)}) = m\big(\big(\mathrm{id}\otimes\hat{S}\big)\big((z_1\otimes z_2)\hat{\Delta}(x)\big)\big)
= m\big((z_1\otimes\bar{z}_2)\big(\mathrm{id}\otimes\hat{S}\big)\big(\hat{\Delta}(x)\big)\big)
= \delta_{z_1,\bar{z}_2} \hat{\varepsilon}(x) \bar{z}_2
\]
by using \eqref{Eq5.14}. Hence we obtain that
\[
\Pi\big((z_1\otimes z_2)\hat{\Delta}(x)\big)\xi = \delta_{z_1,\bar{z}_2} \hat{\varepsilon}(x)\Pi(\bar{z}_2\otimes z_2)\xi.
\]
Substituting the unit $1$ for $x$ in the above identity implies $\Pi(z_1\otimes z_2)\xi = 0$ if $z_1 \neq \bar{z}_2$. Thus, we obtain that $\xi = \sum_{z \in \mathfrak{Z}} \Pi(\bar{z}\otimes z)\xi$, and consequently,
\[
\underline{\Pi}\big(\hat{\Delta}(x)\big)\xi
=
\sum_{(z_1,z_2) \in \mathfrak{Z}\times\mathfrak{Z}} \Pi\big((z_1\otimes z_2)\hat{\Delta}(x)\big)\xi
=
\hat{\varepsilon}(x) \sum_{z \in \mathfrak{Z}} \Pi(\bar{z}\otimes z)\xi
=
\hat{\varepsilon}(x)\xi.
\]

$(ii)\Leftrightarrow (iii)$: This was essentially explained in \cite[Remark 9.4]{Ueda:Preprint20}. In fact, one has
\begin{gather*}
\overline{\Pi}^{\mathrm{rop}}\big(x\otimes1^\mathrm{op}\big)\xi = \Pi(x\otimes1)\xi, \\
\overline{\Pi}^{\mathrm{rop}}\big(1\otimes\big(\vartheta^{{\rm i}/2}(x)\big)^\mathrm{op}\big)\xi = \Pi\big(1\otimes\hat{R}\big(\vartheta^{{\rm i}/2}(x)\big)\big)\xi = \Pi\big(1\otimes\hat{S}^{-1}(x)\big)\xi,
\end{gather*}
and hence
\[
\Pi\big(\hat{S}(x)\otimes1\big)\xi = \Pi(1\otimes x)\xi
\Longleftrightarrow
\overline{\Pi}^{\mathrm{rop}}\big(x\otimes1^\mathrm{op}\big)\xi = \overline{\Pi}^{\mathrm{rop}}\big(1\otimes\big(\vartheta^{{\rm i}/2}(x)\big)^\mathrm{op}\big)\xi
\]
for all $x \in \mathcal{F}(G)$. Since $\mathcal{F}(G)$ is a $\sigma$-weakly dense $*$-subalgebra of $W^*(G)$, a standard appro\-ximation technique (see the proof of \cite[Proposition~4.3]{Ueda:Preprint20}) enables us to confirm the desired equivalence based on the above two identities.

The last assertion was proved in the above proof of $(ii)\Rightarrow (i)$. Hence we are done.
\end{proof}

We give a complete classification of irreducible spherical unitary representations of $G < G\times G$. The consequence is no surprise and the essentially same as the classical case.

\begin{Theorem}
Any irreducible spherical unitary representation of $G < G\times G$ is of the form $\pi_{\bar{U}}\otimes\pi_U \colon W^*(G\times G) \curvearrowright \mathcal{H}_{\bar{U}}\otimes\mathcal{H}_U$ for some irreducible unitary representation $U$ of $G$, and any spherical vector in $\pi_{\bar{U}}\otimes\pi_U \colon W^*(G\times G) \curvearrowright \mathcal{H}_{\bar{U}}\otimes\mathcal{H}_U$ must be a scalar multiple of $R_U(1) \in \mathcal{H}_{\bar{U}}\otimes\mathcal{H}_U$. Here, $(R_U, \bar{R}_U)$ is a unique solution of the conjugate equations for $U$ and $\bar{U}$.
\end{Theorem}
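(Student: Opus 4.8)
The plan is to first use the atomic structure of $W^*(G\times G)$ to pin down the underlying representation, and then to recognize the spherical vector as a morphism out of the trivial representation.

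First I would unravel irreducibility. Let $(\Pi,\xi)$ be an irreducible spherical unitary representation of $G<G\times G$ on $\mathcal{H}_\Pi$, with associated normal $*$-representation $\underline{\Pi}\colon W^*(G\times G)\curvearrowright\mathcal{H}_\Pi$ coming from Lemma \ref{L5.2}; by Definition \ref{D5.3} the vector $\xi$ is cyclic for $\underline{\Pi}(W^*(G\times G))$, and irreducibility means $\underline{\Pi}$ acts irreducibly. Since $W^*(G\times G)\cong\bigoplus_{(z_1,z_2)\in\mathfrak{Z}\times\mathfrak{Z}}B(\mathcal{H}_{U_{z_1}}\otimes\mathcal{H}_{U_{z_2}})$ in the $W^*$-sense by \eqref{Eq5.5}, with each summand a type~I factor on a finite-dimensional Hilbert space, any normal irreducible $*$-representation factors through a single central summand, on which the only normal irreducible $*$-representation is, up to unitary equivalence, the identity. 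Hence there is a pair $(z_1,z_2)$ with $\mathcal{H}_\Pi\cong\mathcal{H}_{U_{z_1}}\otimes\mathcal{H}_{U_{z_2}}$ and, via $\Phi_{U_\bullet}^{(2)}$, $\underline{\Pi}\cong\pi_{U_{z_1}}\otimes\pi_{U_{z_2}}$, so that $\Pi$ is the restriction of $\pi_{U_{z_1}}\otimes\pi_{U_{z_2}}$ to $W^*(G)\otimes W^*(G)$.

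Next I would invoke the final clause of Proposition \ref{P5.4}: $\Pi(z_1'\otimes z_2')\xi=0$ whenever $z_1'\neq\bar{z}_2'$. Here $\Pi(z_1\otimes z_2)$ is the identity of $\mathcal{H}_\Pi$, so $\Pi(z_1\otimes z_2)\xi=\xi\neq0$ forces $z_1=\bar{z}_2$. Putting $U:=U_{z_2}$, an irreducible unitary representation of $G$, and using $\mathcal{H}_{U_{\bar{z}_2}}=\mathcal{H}_{\bar{U}}$, we get $\Pi\cong\pi_{\bar{U}}\otimes\pi_U$ on $\mathcal{H}_{\bar{U}}\otimes\mathcal{H}_U$, which is the asserted form. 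It then remains to classify the spherical vectors of $\pi_{\bar{U}}\otimes\pi_U$. Given a spherical vector $\xi$, define $T_\xi\colon\mathbb{C}\to\mathcal{H}_{\bar{U}}\otimes\mathcal{H}_U$ by $T_\xi(1):=\xi$. By Definition \ref{D5.3} (using Proposition \ref{P5.4} to pass between $W^*(G)$ and $\mathcal{F}(G)$) the spherical condition reads $(\pi_{\bar{U}}\otimes\pi_U)(\hat{\Delta}(x))T_\xi=\hat{\varepsilon}(x)T_\xi$ for all $x\in\mathcal{F}(G)$; since $\hat{\varepsilon}=\pi_{\mathbbm{1}}$ for the trivial representation $\mathbbm{1}$, this says precisely that $T_\xi\in\mathrm{Mor}(\mathbbm{1},\bar{U}\times U)$ in view of \eqref{Eq5.6} (the passage between the $\mathcal{F}(G)$-module formulation and the corepresentation formulation of $\mathrm{Mor}$ being harmless, as everything lives in the finite-dimensional space $\mathcal{H}_{\bar{U}}\otimes\mathcal{H}_U$ and $\mathcal{F}(G)$ is $\sigma$-weakly dense in $W^*(G)$). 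Now $\mathrm{Mor}(\mathbbm{1},\bar{U}\times U)$ is one-dimensional, because the trivial representation occurs in $\bar{U}\times U$ with multiplicity $\dim\mathrm{Mor}(U,U)=1$ by Frobenius reciprocity for the conjugate representation and Schur's lemma, and the solution $R_U$ of the conjugate equations lies in $\mathrm{Mor}(\mathbbm{1},\bar{U}\times U)$ by definition and is nonzero. Hence $\mathrm{Mor}(\mathbbm{1},\bar{U}\times U)=\mathbb{C}R_U$, so $T_\xi=\lambda R_U$ and $\xi=\lambda R_U(1)$ for some $\lambda\in\mathbb{C}$; conversely $R_U(1)\neq0$ is automatically cyclic since $\pi_{\bar{U}}\otimes\pi_U$ is irreducible, confirming it is a spherical vector.

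The main obstacle I anticipate is the identification in the third paragraph of the operator-algebraic spherical condition on $W^*(G)$ with Neshveyev--Tuset's categorical characterization of $\mathrm{Mor}(\mathbbm{1},\bar{U}\times U)$, together with the standard but essential fact that this morphism space is one-dimensional for irreducible $U$; once these are in place, the rest is bookkeeping with the atomic structure of $W^*(G\times G)$ recorded in \eqref{Eq5.5} and the final assertion of Proposition \ref{P5.4}.
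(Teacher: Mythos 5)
Your argument is correct, and its first half is exactly the paper's: the atomic decomposition \eqref{Eq5.5} reduces an irreducible normal representation of $W^*(G\times G)$ to a single block $\pi_{U_{z_1}}\otimes\pi_{U_{z_2}}$, and the final assertion of Proposition \ref{P5.4} forces $z_1=\bar z_2$. Where you genuinely diverge is the classification of spherical vectors. The paper obtains uniqueness up to scalar abstractly, by viewing $\xi$ as a $(\vartheta^t,-1)$-spherical vector through Proposition \ref{P5.4}$(iii)$ and quoting the uniqueness theorem of the previous paper \cite{Ueda:Preprint20}, and then verifies separately that $R_U(1)$ is spherical via condition $(ii)$ of Proposition \ref{P5.4} and Neshveyev--Tuset's characterization of the transpose map $T\mapsto T^\vee$. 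You instead translate condition $(i)$ directly, via \eqref{Eq5.6}, into the statement $T_\xi\in\mathrm{Mor}(\mathbbm{1},\bar U\times U)$, and get existence and uniqueness in one stroke from the one-dimensionality of that morphism space (Frobenius reciprocity plus Schur's lemma) together with the fact that the nonzero element $R_U$ lies in it; cyclicity is then automatic by irreducibility. Your route is more self-contained and purely categorical, avoiding the operator-algebraic machinery of the earlier paper, whereas the paper's route deliberately exercises the general $(\alpha^t,\beta)$-spherical theory it has built. The one step you should spell out is the converse of \eqref{Eq5.6}, i.e., that a $\mathcal{U}(G)$-module intertwiner between finite-dimensional unitary representations is automatically a corepresentation morphism (standard, since $\mathcal{U}(G)=\mathbb{C}[G]^*$ separates points of $\mathbb{C}[G]$), and the harmless reduction from $x\in W^*(G)$ or $\mathcal{F}(G)$ to $x\in\mathcal{U}(G)$, both of which you correctly flag.
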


\begin{proof} By \eqref{Eq5.5}, any irreducible normal representation of $W^*(G\times G)$ is unitarily equivalent to $\pi_{U_{z_1}}\otimes\pi_{U_{z_2}} \colon W^*(G\times G) \curvearrowright \mathcal{H}_{U_{z_1}}\otimes\mathcal{H}_{U_{z_2}}$. By the final assertion of Proposition \ref{P5.4} $z_1$ must be $\bar{z}_2$, and hence $\pi_{U_{\bar{z}_2}}\otimes\pi_{U_{z_2}}$. Consequently, any irreducible spherical unitary representation of $G<G\times G$ must be of the form $\pi_{\bar{U}}\otimes\pi_U \colon W^*(G\times G) \curvearrowright \mathcal{H}_{\bar{U}}\otimes\mathcal{H}_U$. Using \cite[Theorem 6.1]{Ueda:Preprint20} through Proposition 4.3(iii) we see that any spherical vector $\xi$ is unique up to scalar multiple. Hence, thanks to Proposition \ref{P5.4}$(ii)$ and \cite[Example 2.2.23]{NeshveyevTuset:Book13} with the notation $T^\vee$ there, it suffices to confirm that $(T^\vee \otimes1)R_U = (1\otimes T)R_U$ holds for any $T \in B(\mathcal{H}_U)$. However, this identity is known to be the characterization of map $T \mapsto T^\vee$; see the proof of \cite[Proposition~2.2.10]{NeshveyevTuset:Book13}. Hence we are done.
\end{proof}

\begin{Remarks}\label{R5.6} \quad
\begin{enumerate}\itemsep=0pt
\item[$1.$] The above theorem says that any irreducible spherical unitary representation of $G<G\times G$ can be captured by means of $\mathcal{U}(G\times G)$ and its ``spherical state'' must be of the form
\begin{align*}
x \in \mathcal{U}(G\times G) \mapsto
((\pi_{\bar{U}}\otimes\pi_U)(x)R_U(1)\,|\,R_U(1))_{\mathcal{H}_{\bar{U}}\otimes\mathcal{H}_U} = R_U^* (\pi_{\bar{U}}\otimes\pi_U)(x) R_U \in \mathbb{C}
\end{align*}
up to scalar multiple. Moreover,
\[
R_U^* (\pi_{\bar{U}}\otimes\pi_U)(x\otimes y) R_U
=
\begin{cases}
R_U^* \big(\pi_{\bar{U}}\big(x\hat{S}(y)\big)\otimes1\big)R_U = \mathrm{Tr}\big(\pi_{\bar{U}}\big(\rho x\hat{S}(y)\big)\big), \\
R_U^*\big(1\otimes\pi_U\big(y\hat{S}^{-1}(x)\big)\big)R_U = \mathrm{Tr}\big(\pi_U\big(\rho^{-1}y\hat{S}^{-1}(x)\big)\big)
\end{cases}
\]
holds true for any $x,y \in \mathcal{F}(G)$; see \cite[Example~2.2.3]{NeshveyevTuset:Book13}. This should be regarded as the origin of quantized characters in the sense of Sato.

\item[$2.$] We have an anti-automorphism $\hat{\Theta}$ on $\mathcal{U}(G\times G)$ defined by
\[
\hat{\Theta}(x)(a\otimes b) := x(S(b)\otimes S(a)), \qquad x \in \mathcal{U}(G\times G),\quad a,b \in \mathbb{C}[G].
\]
Then
\begin{gather*}
\hat{\Theta}\big(\hat{\Delta}(x)\big)(a\otimes b) = x(S(b)S(a)) = x(S(ab)) = \hat{S}(x)(ab) = \hat{\Delta}\big(\hat{S}(x)\big)(a\otimes b), \\
x \in \mathcal{U}(G),\qquad a,b \in \mathbb{C}[G],
\end{gather*}
and hence $\hat{\Theta}\big(\hat{\Delta}(\mathcal{U}(G)\big) = \hat{\Delta}(\mathcal{U}(G))$ holds. This suggests that $\hat{\Delta}(\mathcal{U}(G)) \subset \mathcal{U}(G\times G)$ is a~realization of ``quantum Gelfand pair'' associated with $G<G\times G$. However, $\hat{\Theta}$ is not well defined on $W^*(G\times G)$.

\item[$3.$] Item (1) above suggests us to ``extend'' the notion of spherical unitary representations to obtain the whole information of conjugate solutions. For the purpose, we remark that general quantum groups are not co-commutative so that $\hat{\Delta}^\mathrm{cop} := \sigma\circ\hat{\Delta}$ with the flip map $\sigma$ gives another possible analog of the diagonal embedding $g \mapsto (g,g)$ of ordinary groups. Here is a~trivial translation of Proposition \ref{P5.4}: In the same setup there, the following are equivalent:
\begin{itemize}\itemsep=0pt
\item[$(i)$] $\overline{\Pi}\big(\hat{\Delta}^{\mathrm{cop}}(x)\big)\xi = \hat{\varepsilon}(x)\xi$ for all $x \in W^*(G)$.
\item[$(ii)$] $\Pi\big(1\otimes\hat{S}(x)\big)\xi=\Pi(x\otimes1)\xi$ for all $x \in \mathcal{F}(G)$.
\item[$(iii)$] $\xi$ is a $(\vartheta^t,+1)$-spherical vector in the bi-normal $*$-representation $\overline{\Pi}^{\mathrm{rop}} \colon W^*(G)\otimes_\mathrm{max}W^*(G)^\mathrm{op} \curvearrowright \mathcal{H}_\Pi$.
\end{itemize}\pagebreak
Then any irreducible spherical unitary representation of $G<G\times G$ with replacing $\hat{\Delta}$ with $\hat{\Delta}^{\mathrm{cop}}$ and any spherical vector in it must be of the form $\pi_U\otimes\pi_{\bar{U}} \colon W^*(G\times G) \curvearrowright \mathcal{H}_U\otimes\mathcal{H}_{\bar{U}}$ and a scalar multiple of $\bar{R}_U(1) \in \mathcal{H}_U\otimes\mathcal{H}_{\bar{U}}$, respectively. (See \cite[Theorem~2.2.21]{NeshveyevTuset:Book13} and its preliminary discussion.) Consequently, the $\bar{R}$-part of solutions of conjugate equations can be obtained as $(\vartheta^t,+1)$-spherical representations. It may be possible to give further deformations of diagonal embedding $\hat{\Delta} \colon \mathcal{U}(G) \to \mathcal{U}(G\times G)$.

\item[$4.$] There is no special reason, in Sato's approach \cite{Sato:JFA19, Sato:preprint19} to asymptotic representation theory for quantum groups, for which choice $\beta=\pm1$ of inverse temperature of KMS states is appropriate. (Note that only sign, or the time direction, is important, because the general~$\beta$ case can be transformed into one of the cases of $\beta=\pm1$ by positive scaling.) Hence Sato's choice of~$\beta$ was just on an arbitrary basis and not essential. However, the spherical representation theory shows that the choice must be $\beta=-1$ and taking time reversal corresponds to the co-opposite transition (or the interchange of~$R_U$ and~$\bar{R}_U$ in conjugate solutions).
\end{enumerate}
\end{Remarks}

\subsection{Inductive limits of compact quantum groups} \label{S5.2}

Let $G_n$, $n \geq 0$, be an inductive sequence of compact quantum groups with $G_0$ the trivial one, that is, for each $n$ there is a surjective Hopf $*$-algebra morphism $\mathbb{C}[G_{n+1}] \twoheadrightarrow \mathbb{C}[G_n]$. We have to formulate their inductive limit ``$G=\varinjlim G_n$'' in the category of ``quantum groups'' to discuss spherical unitary representations of $G < G\times G$.

The most widely accepted approach to ``quantum groups'' is based upon the notion of quantized universal enveloping algebras. The approach looks purely algebraic, but in principle, it involves differential operators, so that it requires a ``differential structure'' on \emph{infinite-dimensional} (virtual) space $G=\varinjlim G_n$. Such a~structure has not completely been established so far even in the classical setting such as $\mathrm{U}(\infty) = \varinjlim \mathrm{U}(n)$, to the best of our knowledge. Thus, it is difficult to use quantized universal enveloping algebras directly for our purpose. See Section~\ref{S5.4.6} for a~related discussion.
Moreover, the most plausible approach to unitary representations is the use of group $C^*$-algebras. However, the lack of ``Haar measures'' even in the classical setting due to the infinite-dimensionality becomes a serious issue to use the standard construction of group $C^*$-algebras.

With these reasons, Sato \cite{Sato:preprint19} used the framework of $W^*$-bialgebras to understand $G$ and developed its asymptotic unitary representation theory of Vershik--Kerov's type based on his earlier work \cite{Sato:JFA19}. The approach to formulate $G$ below is slightly different from his (and hopefully more natural than his), though they are equivalent at least for the questions he considered.

\subsubsection[Inductive Baire group C\textasciicircum{}*-algebra]
{Inductive Baire group $\boldsymbol{C^*}$-algebra} \label{S5.2.1}

Taking the dual of $\mathbb{C}[G_{n+1}] \twoheadrightarrow \mathbb{C}[G_n]$ for each $n$ we have an embedding $\mathcal{U}(G_n) \hookrightarrow \mathcal{U}(G_{n+1})$ that respects the dual Hopf $*$-algebra structure. We can take the algebraic inductive limit $\mathcal{U}(G) := \varinjlim \mathcal{U}(G_n)$ (see, e.g., \cite[Chapter 2]{Effros:CBMSBook} for the notion of inductive (or direct) limits), which is too big to discuss unitary representations. Here, we observe that the embedding sends $W^*(G_n)$ into $W^*(G_{n+1})$. Thus we have an inductive sequence $W^*(G_n)$ with the following additional structures: comultiplications $\hat{\Delta}_n \colon W^*(G_n) \to W^*(G_n\times G_n)$, counits $\hat{\varepsilon}_n \colon W^*(G_n)\allowbreak \to\mathbb{C}$, unitary antipodes $\hat{R}_n \colon W^*(G_n) \to W^*(G_n)$ and flows $\vartheta_n^t \colon \mathbb{R} \curvearrowright W^*(G_n)$. Those structures are well behaved under the embeddings $W^*(G_n) \hookrightarrow W^*(G_{n+1})$ that come from the natural ones $\mathcal{U}(G_n) \hookrightarrow \mathcal{U}(G_{n+1})$; namely, the restrictions of $\hat{\Delta}_{n+1}$, $\hat{\varepsilon}_{n+1}$, $\hat{R}_{n+1}$ and $\vartheta_{n+1}^t$ to $W^*(G_n)$ are exactly $\hat{\Delta}_n$, $\hat{\varepsilon}_n$, $\hat{R}_n$ and $\vartheta_n^t$ , respectively.

Here is a remark. Since the special positive elements $\rho_n$ (see \eqref{Eq5.7}) do not form an inductive sequence in any sense, it is a bit non-trivial that the compatibility of flows $\vartheta_n^t$ as well as the unitary antipodes $\hat{R}_n$ with the embeddings $\mathcal{U}(G_n) \hookrightarrow \mathcal{U}(G_{n+1})$. This fact can be confirmed by using the compatibility of antipodes $\hat{S}_n$ as follows. For any $x \in \mathcal{U}(G_n)$, we have
\[
\rho_n x \rho_n^{-1} = \hat{S}_n^2(x) = \hat{S}_{n+1}^2(x) = \rho_{n+1}x\rho_{n+1}^{-1}
\]
by the definition of unitary antipodes $\hat{R}_n$ with the help of \eqref{Eq5.10}. It follows that $[\rho_n,\rho_{n+1}] = 0 = \big[\rho_{n}^{-1}\rho_{n+1},x\big]$ for all $x \in \mathcal{U}(G_n)$. Hence we have
\[
\vartheta_{n+1}^t(x) = \rho_{n+1}^{{\rm i}t}\big(\rho_n^{-1}\rho_{n+1}\big)^{-{\rm i}t} x \big(\rho_n^{-1}\rho_{n+1}\big)^{{\rm i}t}\rho_{n+1}^{-{\rm i}t} = \rho_n^{{\rm i}t} x \rho_n^{-{\rm i}t} = \vartheta_n^t(x)
\]
for all $x \in \mathcal{U}(G_n)$. This and \eqref{Eq5.11}, \eqref{Eq5.12} imply the compatibility of unitary antipodes~$\hat{R}_n$. We~remark that this argument clearly works for any pair of compact quantum group and its subgroup.

So far and in what follows, we do not use any symbols to specify the embedding of $W^*(G_n)$ into $W^*(G_{n+1})$ unlike \cite{Sato:JFA19} for simplicity, though it encodes an important representation theoretic information; in fact, the embedding comes from the restriction procedure for irreducible unitary representations of $G_{n+1}$ to $G_n$. See Lemma \ref{L5.14}.

In the above setup, we consider the $C^*$-algebraic inductive limit $\mathcal{B}(G) := \varinjlim W^*(G_n)$ and denote by $\mathcal{B}_\infty(G)$ its canonical local $W^*$-subalgebra (that is the algebraic inductive limit of $W^*(G_n)$'s). We think of this $C^*$-algebra as a group algebra corresponding to a kind of ``inductive Baire structure'' associated with $G=\varinjlim G_n$.

We also consider the $C^*$-algebraic inductive limit $\mathcal{B}(G\times G) := \varinjlim W^*(G_n\times G_n)$, and denote by $\mathcal{B}_\infty(G\times G)$ its local $W^*$-algebra (the algebraic inductive limit of $W^*(G_n\times G_n)$). Then we have
\[
\hat{\Delta} := \varinjlim \hat{\Delta}_n \colon \ \mathcal{B}(G) \to \mathcal{B}(G\times G), \qquad \hat{R} := \varinjlim \hat{R}_n, \qquad \vartheta^t := \varinjlim \vartheta_n^t, \qquad \hat{\varepsilon} := \varinjlim\hat{\varepsilon}_n
\]
according to $\mathcal{B}(G) = \varinjlim W^*(G_n)$. Clearly, all the maps above are locally normal. The pentad $\big(\mathcal{B}(G), \hat{\Delta} \colon \mathcal{B}(G) \to \mathcal{B}(G\times G), \hat{R}, \vartheta^t, \hat{\varepsilon}\big)$ enjoys almost all the requirements to be ``Hopf $*$-algebra'' like $(\mathcal{U}(G_n),\hat{\Delta}_n)$.

\begin{Definition}
We call $\mathcal{B}(G)$ \emph{the inductive Baire group $C^*$-algebra of $G=\varinjlim G_n$} and understand the pentad $\big(\mathcal{B}(G), \hat{\Delta} \colon \mathcal{B}(G) \to \mathcal{B}(G\times G), \hat{R}, \vartheta^t, \hat{\varepsilon}\big)$ as \emph{the quantum group structure} of $G=\varinjlim G_n$.
\end{Definition}

Let $W^*(G)$ be the locally normal enveloping $W^*$-algebra of $\mathcal{B}(G)$, i.e., a unique $W^*$-algebra whose predual is given by all the locally normal linear functionals on $\mathcal{B}(G)$ so that $\mathcal{B}(G)$ sits in $W^*(G)$ as a $\sigma$-weakly dense subalgebra. Similarly, let $W^*(G\times G)$ be the locally normal enveloping $W^*$-algebra of $\mathcal{B}(G\times G)$. These $W^*(G)$ and $W^*(G\times G)$ are nothing but the $W^*$-inductive limits of $W^*(G_n)$ and $W^*(G_n\times G_n)$ in the sense of Takeda. (See \cite[Section 5.2]{Ueda:Preprint20} for the notion of locally normal enveloping $W^*$-algebras and $W^*$-inductive limits.) We have normal extensions of $\hat{\varepsilon}$, $\hat{R}$, $\vartheta^t$ to $W^*(G)$ and also a normal $*$-homomorphism $\hat{\Delta} \colon W^*(G) \to W^*(G\times G)$ with keeping the same symbols. With a canonical surjection $W^*(G\times G) \twoheadrightarrow W^*(G)\,\bar{\otimes}\,W^*(G)$ (whose existence is an easy exercise), we have obtained a normal $*$-homomorphism
\[
W^*(G) \overset{\hat{\Delta}}{\to} W^*(G\times G) \twoheadrightarrow W^*(G)\,\bar{\otimes}\,W^*(G),
\]
which is nothing but the comultiplication in \cite{Sato:JFA19}. In this way, Sato formulated $G=\varinjlim G_n$ within the framework of $W^*$-bialgebras. Although his approach has a merit as an attempt to enlarge the operator algebraic framework of quantum groups beyond the locally compact setting, we prefer to regard $\hat{\Delta} \colon \mathcal{B}(G) \to \mathcal{B}(G\times G)$ as a ``true comultiplication'' instead.

\subsubsection{Spherical unitary representations}
It is easy to see that the algebraic tensor product $\mathcal{B}_\infty(G)\otimes\mathcal{B}_\infty(G)$ is naturally identified with the algebraic inductive limit of the sequence $W^*(G_n)\otimes W^*(G_n)$, which sits in $\mathcal{B}_\infty(G\times G)$. Also, it is trivial that $\mathcal{B}_\infty(G)\otimes\mathcal{B}_\infty(G)$ is naturally embedded into $\mathcal{B}(G)\otimes_{\max}\mathcal{B}(G)$. We will translate Lemma \ref{L5.2}, Definition \ref{D5.3} and Proposition \ref{P5.4} above into Lemma \ref{L5.8}, Definition \ref{D5.9} and Proposition \ref{P5.10}, respectively, for the inductive limit $G = \varinjlim G_n$.

\begin{Lemma} \label{L5.8} Let $\Pi \colon \mathcal{B}_\infty(G)\otimes\mathcal{B}_\infty(G) \curvearrowright \mathcal{H}_\Pi$ be a locally bi-normal $*$-representation. Then, there are a locally bi-normal $*$-representation $\overline{\Pi} \colon \mathcal{B}(G)\otimes_{\max}\mathcal{B}(G) \curvearrowright \mathcal{H}_\Pi$ and a locally normal $*$-representation $\underline{\Pi} \colon \mathcal{B}(G\times G) \curvearrowright \mathcal{H}_\Pi$ such that the diagram
\[
\xymatrix{
\mathcal{B}(G)\otimes_{\max}\mathcal{B}(G) \ar[rd]^{\overline{\Pi}} & \\
\mathcal{B}_\infty(G)\otimes\mathcal{B}_\infty(G) \ar@{^{(}-_>}[u] \ar@{^{(}-_>}[d] \ar[r]^{\quad \Pi} & B(\mathcal{H}_\Pi) \\
\mathcal{B}(G\times G) \ar[ru]_{\underline{\Pi}} &
}
\]
commutes, where the vertical arrows mean the embeddings explained above.
\end{Lemma}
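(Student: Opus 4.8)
The plan is to reduce everything to finite levels, invoke Lemma~\ref{L5.2} for each compact quantum group $G_n$, and then glue the resulting data along the two inductive systems. Concretely, since $\Pi$ is locally bi-normal, its restriction $\Pi_n := \Pi\!\upharpoonright_{W^*(G_n)\otimes W^*(G_n)}$ is a bi-normal $*$-representation of $W^*(G_n)\otimes W^*(G_n)$ for each $n$, so Lemma~\ref{L5.2} furnishes a unique bi-normal $\overline{\Pi}_n \colon W^*(G_n)\otimes_{\max}W^*(G_n)\curvearrowright\mathcal{H}_\Pi$ and a unique normal $\underline{\Pi}_n \colon W^*(G_n\times G_n)\curvearrowright\mathcal{H}_\Pi$, each extending $\Pi_n$.

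For $\overline{\Pi}$ I would not try to present $\mathcal{B}(G)\otimes_{\max}\mathcal{B}(G)$ as an inductive limit of maximal tensor products (maximal tensor products need not preserve isometric inclusions), but instead use the universal property of $\otimes_{\max}$. The assignments $x\mapsto\Pi(x\otimes1)$ and $x\mapsto\Pi(1\otimes x)$ define unital $*$-representations of $\mathcal{B}_\infty(G)$ with commuting ranges; each is contractive on every $W^*(G_n)$, being a $*$-representation of that $C^*$-algebra, and the inclusions $W^*(G_n)\hookrightarrow W^*(G_{n+1})$ are isometric, so they extend to commuting unital $*$-representations $\lambda,\rho\colon\mathcal{B}(G)\to B(\mathcal{H}_\Pi)$. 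The universal property then yields $\overline{\Pi}\colon\mathcal{B}(G)\otimes_{\max}\mathcal{B}(G)\to B(\mathcal{H}_\Pi)$ with $\overline{\Pi}(x\otimes y)=\lambda(x)\rho(y)$; composing it with the canonical $*$-homomorphism $W^*(G_n)\otimes_{\max}W^*(G_n)\to\mathcal{B}(G)\otimes_{\max}\mathcal{B}(G)$ gives a $*$-representation that is normal in each variable (because $\Pi_n$ is) and extends $\Pi_n$, hence equals $\overline{\Pi}_n$ by the uniqueness in Lemma~\ref{L5.2}. Thus $\overline{\Pi}$ is locally bi-normal and agrees with $\Pi$ on $\mathcal{B}_\infty(G)\otimes\mathcal{B}_\infty(G)$.

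For $\underline{\Pi}$ I would first check that the family $(\underline{\Pi}_n)_n$ is compatible with the connecting embeddings $W^*(G_n\times G_n)\hookrightarrow W^*(G_{n+1}\times G_{n+1})$: the restriction of $\underline{\Pi}_{n+1}$ along this normal inclusion is a normal $*$-representation of $W^*(G_n\times G_n)$ that agrees with $\Pi$, hence with $\Pi_n$, on the $\sigma$-weakly dense $*$-subalgebra $W^*(G_n)\otimes W^*(G_n)$, and therefore coincides with $\underline{\Pi}_n$ (again by the uniqueness in Lemma~\ref{L5.2}, or directly by $\sigma$-weak density). Consequently the $\underline{\Pi}_n$ glue to a $*$-representation of $\mathcal{B}_\infty(G\times G)=\varinjlim_n W^*(G_n\times G_n)$ which is contractive on each level, hence extends by continuity to a locally normal $*$-representation $\underline{\Pi}\colon\mathcal{B}(G\times G)\to B(\mathcal{H}_\Pi)$ with $\underline{\Pi}\!\upharpoonright_{W^*(G_n\times G_n)}=\underline{\Pi}_n$. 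Finally, commutativity of the two triangles of the diagram follows once one observes that both $\overline{\Pi}$ and $\underline{\Pi}$ restrict to $\Pi$ on $\mathcal{B}_\infty(G)\otimes\mathcal{B}_\infty(G)$, since on each $W^*(G_n)\otimes W^*(G_n)$ this is exactly the relation $\overline{\Pi}_n\!\upharpoonright_{W^*(G_n)\otimes W^*(G_n)}=\Pi_n=\underline{\Pi}_n\!\upharpoonright_{W^*(G_n)\otimes W^*(G_n)}$ from Lemma~\ref{L5.2}.

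I expect the only genuinely delicate point to be the gluing compatibility for $\underline{\Pi}$: it is there that the uniqueness clause of Lemma~\ref{L5.2}, the normality of the $W^*$-inclusions $W^*(G_n\times G_n)\hookrightarrow W^*(G_{n+1}\times G_{n+1})$, and the $\sigma$-weak density of the algebraic tensor product $W^*(G_n)\otimes W^*(G_n)$ in $W^*(G_n\times G_n)=W^*(G_n)\,\bar{\otimes}\,W^*(G_n)$ all have to be used together; everything else is routine bookkeeping with $C^*$-inductive limits.
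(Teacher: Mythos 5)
Your proposal is correct and follows essentially the same route as the paper: construct $\overline{\Pi}$ by extending $\Pi$ through the universal property of the maximal tensor product (the paper treats this as immediate), and build $\underline{\Pi}$ by applying Lemma~\ref{L5.2} at each level $n$, using normality together with the $\sigma$-weak density of $W^*(G_n)\otimes W^*(G_n)$ in $W^*(G_n\times G_n)$ to see that $\underline{\Pi}_{n+1}$ restricts to $\underline{\Pi}_n$, and then gluing along the inductive limit $\mathcal{B}(G\times G)=\varinjlim W^*(G_n\times G_n)$. The point you flag as delicate (the compatibility of the $\underline{\Pi}_n$) is exactly the step the paper singles out, so no gap remains.
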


\begin{proof}
It is obvious that $\Pi$ extends to $\mathcal{B}(G)\otimes\mathcal{B}(G)$, and hence does to $\mathcal{B}(G)\otimes_{\max}\mathcal{B}(G)$ by means of maximal $C^*$-algebraic tensor products. This extension is nothing but the desired~$\overline{\Pi}$.\looseness=1

For each $n$, we consider the restriction $\Pi_n$ of $\Pi$ to $W^*(G_n)\otimes W^*(G_n)$ and apply Lem\-ma~\ref{L5.2} to this restriction. Then we have a unique normal $*$-representation $\underline{\Pi}_n \colon W^*(G_n\times G_n) \curvearrowright \mathcal{H}_\Pi$ that agrees with $\Pi_n$. Thanks to the normality, we see that $\underline{\Pi}_{n+1}$ agrees with $\underline{\Pi}_n$ over the whole $W^*(G_n\times G_n)$ rather than $W^*(G_n)\otimes W^*(G_n)$. Thus the inductive structure of $\mathcal{B}(G\times G)$ enables us to construct a locally normal $*$-representation $\underline{\Pi} \colon \mathcal{B}(G\times G) \curvearrowright \mathcal{H}_\Pi$. By construction, this $*$-representation agrees with $\Pi$ on every $W^*(G_n)\otimes W^*(G_n)$ so that the commutative diagram holds. Hence we have obtained the desired locally normal $*$-represen\-tation~$\underline{\Pi}$.\looseness=1
\end{proof}

We call a locally bi-normal $*$-representation $\Pi \colon \mathcal{B}_\infty\otimes\mathcal{B}_\infty(G) \curvearrowright \mathcal{H}_\Pi$ \emph{a unitary representation of $G\times G$} with $G = \varinjlim G_n$. The above lemma shows that there are two natural extensions $\overline{\Pi} \colon \mathcal{B}(G)\otimes_{\max}\mathcal{B}(G) \curvearrowright \mathcal{H}_\Pi$ and $\underline{\Pi} \colon \mathcal{B}(G\times G) \curvearrowright \mathcal{H}_\Pi$ with appropriate continuity. Here is a~defi\-nition.

\begin{Definition}\label{D5.9} A unitary representation $\Pi$ of $G\times G$ on $\mathcal{H}_\Pi$ together with a unit vector $\xi \in \mathcal{H}_\Pi$ is called a \emph{spherical unitary representation} of $G < G\times G$ (with $G = \varinjlim G_n$), if $\xi$ is cyclic for $\underline{\Pi}(\mathcal{B}(G\times G))$ and $\underline{\Pi}\big(\hat{\Delta}(x)\big)\xi=\hat{\varepsilon}(x)\xi$ holds for every $x\in \mathcal{B}(G)$. We call such a vector \emph{a spherical vector} in $\mathcal{H}_\Pi$ of $G < G\times G$ through $\Pi$.
\end{Definition}

It is now easy to confirm that this definition completely agrees with the definition of $(\vartheta^t,-1)$-spherical representation. Namely, all the results in our previous paper can successfully be applied to spherical representations of $G < G\times G$ with $G = \varinjlim G_n$. In other words, our studies certainly justify Sato's asymptotic representation theory for $G = \varinjlim G_n$ from the viewpoint of spherical representations, and especially, explains why his notion of quantized characters playing a central r\^{o}le in \cite{Sato:JFA19,Sato:preprint19,Sato:ETDS2x} is appropriate.

{\samepage\begin{Proposition} \label{P5.10} Let $\Pi$ be a unitary representation of $G\times G$ on a Hilbert space $\mathcal{H}_\Pi$ and $\xi \in \mathcal{H}_\Pi$ be a non-zero vector. The following are equivalent:
\begin{itemize}\itemsep=0pt
\item[$(i)$] $\underline{\Pi}\big(\hat{\Delta}(x)\big)\xi=\hat{\varepsilon}(x)\xi$ holds for every $x\in\mathcal{B}(G)$.
\item[$(ii)$] $\Pi\big(\hat{S}_n(x)\otimes1\big)\xi=\Pi(1\otimes x)\xi$ holds for every $x \in \mathcal{F}(G_n)$ and $n \geq 0$.
\item[$(iii)$] $\xi$ is $(\vartheta^t,-1)$-spherical vector in the $*$-representation $\overline{\Pi}^{\mathrm{rop}} \colon \mathcal{B}(G)\otimes_{\max}\mathcal{B}(G)^{\mathrm{op}} \curvearrowright \mathcal{H}_\Pi$ defined to be
\[
\begin{matrix}
\mathcal{B}(G)\otimes_\mathrm{max}\mathcal{B}(G)^\mathrm{op}
& \overset{\mathrm{id}\,\otimes\,\hat{R}}{\to} &
\mathcal{B}(G)\otimes_\mathrm{max}\mathcal{B}(G)
&\overset{\overline{\Pi}}{\to} &
B(\mathcal{H}_\Pi), \\
x\otimes y^\mathrm{op}
& \mapsto &
x\otimes\hat{R}(y)
& \mapsto &
\overline{\Pi}\big(x\otimes\hat{R}(y)\big).
\end{matrix}
\]
See {\rm \cite[Definition~5.1, Remark and Definition~5.2]{Ueda:Preprint20}} for the notion of $(\vartheta^t,-1)$-spherical vectors.
\end{itemize}
\end{Proposition}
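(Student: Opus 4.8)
The plan is to reduce Proposition \ref{P5.10} to its single-quantum-group predecessor, Proposition \ref{P5.4}, applied at every finite stage $n$, and then to glue the stages together using the inductive-limit compatibilities collected in Section \ref{S5.2.1}. Fix $n\ge0$ and let $\Pi_n$ be the restriction of $\Pi$ to $W^*(G_n)\otimes W^*(G_n)$; this is a unitary representation of $G_n\times G_n$ in the sense of Section \ref{S5.1}. Running the construction in the proof of Lemma \ref{L5.8} at stage $n$ shows that the normal extension $\underline{\Pi}_n\colon W^*(G_n\times G_n)\curvearrowright\mathcal{H}_\Pi$ supplied by Lemma \ref{L5.2} is exactly the restriction of $\underline{\Pi}$, and that the $C^*$-extension $\overline{\Pi}_n\colon W^*(G_n)\otimes_{\max}W^*(G_n)\curvearrowright\mathcal{H}_\Pi$ equals $\overline{\Pi}$ precomposed with the canonical $*$-homomorphism $W^*(G_n)\otimes_{\max}W^*(G_n)\to\mathcal{B}(G)\otimes_{\max}\mathcal{B}(G)$; twisting by $\mathrm{id}\otimes\hat{R}$ and using $\hat{R}\!\upharpoonright_{W^*(G_n)}=\hat{R}_n$, the same holds for $\overline{\Pi}_n^{\mathrm{rop}}$ relative to $\overline{\Pi}^{\mathrm{rop}}$. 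Proposition \ref{P5.4} applied to $(G_n,\Pi_n,\xi)$ then yields, for each $n$, the equivalence of the three stage-$n$ conditions: $(i)_n$, that $\underline{\Pi}_n\big(\hat{\Delta}_n(x)\big)\xi=\hat{\varepsilon}_n(x)\xi$ for all $x\in W^*(G_n)$; $(ii)_n$, that $\Pi\big(\hat{S}_n(x)\otimes1\big)\xi=\Pi(1\otimes x)\xi$ for all $x\in\mathcal{F}(G_n)$; and $(iii)_n$, that $\xi$ is a $(\vartheta_n^t,-1)$-spherical vector in $\overline{\Pi}_n^{\mathrm{rop}}$.

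Next I would match each of the three global conditions with the conjunction over $n$ of its stage-$n$ version. Condition $(ii)$ is, word for word, ``$(ii)_n$ holds for every $n$'', so there is nothing to prove. For $(i)$: the map $x\mapsto\underline{\Pi}\big(\hat{\Delta}(x)\big)\xi-\hat{\varepsilon}(x)\xi$ is norm-continuous on $\mathcal{B}(G)$ (being built from the contractive maps $\hat{\Delta}$, $\underline{\Pi}$ and $\hat{\varepsilon}$), and it agrees on each $W^*(G_n)$ with $x\mapsto\underline{\Pi}_n\big(\hat{\Delta}_n(x)\big)\xi-\hat{\varepsilon}_n(x)\xi$, since $\hat{\Delta}$, $\underline{\Pi}$, $\hat{\varepsilon}$ restrict to $\hat{\Delta}_n$, $\underline{\Pi}_n$, $\hat{\varepsilon}_n$; as $\bigcup_n W^*(G_n)$ is norm-dense in $\mathcal{B}(G)=\varinjlim W^*(G_n)$, condition $(i)$ holds if and only if $(i)_n$ holds for every $n$. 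Together with the stage-wise Proposition \ref{P5.4} this already gives $(i)\Leftrightarrow(ii)$, and reduces the whole statement to showing that $(iii)$ is equivalent to ``$(iii)_n$ holds for every $n$''.

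That last equivalence is the only point needing real work. Recalling the defining relation of a $(\vartheta^t,-1)$-spherical vector, as made explicit in the proof of Proposition \ref{P5.4} via \cite[Definitions 5.1 and 5.2]{Ueda:Preprint20}, being $(\vartheta^t,-1)$-spherical in $\overline{\Pi}^{\mathrm{rop}}$ amounts to the identity $\overline{\Pi}^{\mathrm{rop}}\big(x\otimes1^{\mathrm{op}}\big)\xi=\overline{\Pi}^{\mathrm{rop}}\big(1\otimes(\vartheta^{\mathrm{i}/2}(x))^{\mathrm{op}}\big)\xi$ holding on a suitable dense set of $\vartheta^t$-analytic $x\in\mathcal{B}(G)$, and likewise for $\overline{\Pi}_n^{\mathrm{rop}}$ with $\vartheta_n^t$-analytic $x\in W^*(G_n)$. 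By $\vartheta^t\!\upharpoonright_{W^*(G_n)}=\vartheta_n^t$ (Section \ref{S5.2.1}), any $\vartheta_n^t$-analytic $x\in W^*(G_n)$ is $\vartheta^t$-analytic with $\vartheta^{\mathrm{i}/2}(x)=\vartheta_n^{\mathrm{i}/2}(x)$, and, after composing with the canonical maps above, the stage-$n$ identity is precisely the global one restricted to such $x$; hence $(iii)\Rightarrow(iii)_n$ for every $n$ is immediate. Conversely, the $\vartheta_n^t$-analytic elements of the $W^*(G_n)$ form a nested family whose union $\mathcal{D}$ is a $*$-subalgebra of $\mathcal{B}_\infty(G)$, locally $\sigma$-weakly dense and consisting of $\vartheta^t$-analytic elements, and ``$(iii)_n$ for all $n$'' gives the identity for all $x\in\mathcal{D}$; the standard approximation argument of \cite[proof of Proposition 4.3]{Ueda:Preprint20} — the same one used for the equivalence $(ii)\Leftrightarrow(iii)$ in Proposition \ref{P5.4} — then propagates it to all $\vartheta^t$-analytic $x\in\mathcal{B}(G)$, i.e.\ gives $(iii)$. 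Chaining ``$(i)\Leftrightarrow[(i)_n\ \forall n]\Leftrightarrow[(ii)_n\ \forall n]\Leftrightarrow(ii)$'' with ``$[(ii)_n\ \forall n]\Leftrightarrow[(iii)_n\ \forall n]\Leftrightarrow(iii)$'' completes the argument.

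The step I expect to be the main obstacle is exactly this last one: verifying that the $(\vartheta^t,-1)$-spherical property of a vector is genuinely detected on the dense local analytic algebra $\mathcal{D}$ — equivalently, that a locally normal state on $\mathcal{B}(G)$ is $(\vartheta^t,-1)$-KMS precisely when each of its restrictions to the $W^*(G_n)$ is $(\vartheta_n^t,-1)$-KMS. Here the $u$-continuity (rather than norm-continuity) of the flows $\vartheta_n^t$ is what makes the approximation delicate, and one must lean on the technique of \cite[Proposition 4.3]{Ueda:Preprint20} together with the flow and antipode compatibilities from Section \ref{S5.2.1}; everything else is routine bookkeeping of restriction relations.
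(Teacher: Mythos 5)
Your proposal is correct and follows essentially the same route as the paper: reduce to Proposition \ref{P5.4} applied at each level $n$ (using that $\hat{\Delta}$, $\hat{\varepsilon}$, $\hat{R}$, $\vartheta^t$ and $\underline{\Pi}$, $\overline{\Pi}$ restrict to their level-$n$ counterparts), and then glue via norm density of $\bigcup_n W^*(G_n)$ in $\mathcal{B}(G)$ together with the $\sigma$-weak density of $\mathcal{F}(G_n)$ in $W^*(G_n)$ and the standard analytic-element approximation (Phragm\'en--Lindel\"of) to upgrade the spherical identity from the local analytic/finitely supported elements to all of $\mathcal{B}(G)$. Your re-chaining through ``$(iii)_n$ for all $n$'' instead of going directly $(ii)\Leftrightarrow(iii)$ is only a cosmetic reordering of the same ingredients, and the delicate point you flag at the end is exactly the step the paper disposes of with that approximation technique.
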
}

\begin{proof}
$(i)\Leftrightarrow (ii)$: Since $\hat{\Delta} = \varinjlim\hat{\Delta}_n$ and $\hat{\varepsilon} = \varinjlim\hat{\varepsilon}_n$ together with density, item $(i)$ is equivalent to that $\underline{\Pi}\big(\hat{\Delta}_n(x)\big)\xi = \hat{\varepsilon}_n(x)\xi$ holds for all $x \in W^*(G_n\times G_n)$ and $n \geq 0$. Applying Proposition~\ref{P5.4} to each level $n$, we see that this is equivalent to item $(ii)$.

$(ii)\Leftrightarrow (iii)$: As in the proof of Proposition~\ref{P5.4}$(ii)\Leftrightarrow (iii)$ we see that item $(ii)$ is equiva\-lent~to
\[
\overline{\Pi}^{\mathrm{rop}}(x\otimes1)\xi = \overline{\Pi}^{\mathrm{rop}}\big(1\otimes\big(\vartheta_n^{{\rm i}/2}(x)\big)^{\mathrm{op}}\big)\xi = \overline{\Pi}^{\mathrm{rop}}\big(1\otimes\big(\vartheta^{{\rm i}/2}(x)\big)^{\mathrm{op}}\big)\xi, \qquad
x \in \bigcup_n \mathcal{F}(G_n).
\]
Using the $\sigma$-weak density of $\mathcal{F}(G_n)$ in $W^*(G_n)$ and the norm density of $\mathcal{B}_\infty(G)$ in $\mathcal{B}(G)$ together with the Phragmen--Lindel\"{o}f method, we can prove that $\xi$ is a $(\vartheta^t,-1
)$-spherical vector in $\overline{\Pi}^{\mathrm{rop}} \colon \mathcal{B}(G)\otimes_{\max}\mathcal{B}(G)^{\mathrm{op}} \curvearrowright \mathcal{H}_\Pi$.
\end{proof}

Let $\Pi$ be a unitary representation of $G\times G$ with $G = \varinjlim G_n$ on a Hilbert space $\mathcal{H}_\Pi$. By~construction we observe that
\begin{align*}
\Pi(\mathcal{B}_\infty(G)\otimes\mathcal{B}_\infty(G))'' &= \underline{\Pi}(\mathcal{B}(G\times G))'' = \overline{\Pi}(\mathcal{B}(G)\otimes_\mathrm{max}\mathcal{B}(G))''
\\
&= \overline{\Pi}^\mathrm{rop}\big(\mathcal{B}(G)\otimes_\mathrm{max}\mathcal{B}(G)^\mathrm{op}\big)''
\end{align*}
on $\mathcal{H}_\Pi$, and hence \cite[Theorem~6.1]{Ueda:Preprint20} is available in this setup thanks to the above proposition. Therefore, we conclude that $G < G\times G$ is a Gelfand pair in the sense of Olshanski; see, e.g., \cite[Section 6]{Ueda:Preprint20}.

The above proposition also gives the next desired assertion.

\begin{Corollary}\label{C5.11} Let $\Pi$ be a unitary representation of $G\times G$ with $G = \varinjlim G_n$ on a Hilbert space~$\mathcal{H}_\Pi$ and $\xi \in \mathcal{H}_\Pi$ be a unit vector. Then the following are equivalent:
\begin{itemize}\itemsep=0pt
\item[$(i)$] $(\Pi,\mathcal{H}_\Pi,\xi)$ is a spherical unitary representation of $G<G\times G$ with $G = \varinjlim G_n$.
\item[$(ii)$] $\big(\overline{\Pi}^{\mathrm{rop}},\mathcal{H}_\Pi,\xi\big)$ is a locally normal $(\vartheta^t,-1)$-spherical representation of $\mathcal{B}(G)$.
\end{itemize}
\end{Corollary}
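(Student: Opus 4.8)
The plan is to reduce everything to Proposition~\ref{P5.10} together with the chain of von Neumann algebra identities recorded just before the statement, so that only the bookkeeping of matching the two definitions remains. Recall that a locally normal $(\vartheta^t,-1)$-spherical representation of $\mathcal{B}(G)$, in the sense of \cite[Section~5]{Ueda:Preprint20}, consists of two pieces of data: a locally bi-normal $*$-representation of $\mathcal{B}(G)\otimes_{\max}\mathcal{B}(G)^{\mathrm{op}}$ on a Hilbert space, together with a distinguished unit vector that is cyclic for this representation and is a $(\vartheta^t,-1)$-spherical vector. Since the hypotheses of the corollary already provide a unitary representation $\Pi$ of $G\times G$ and the associated extensions $\overline{\Pi}$, $\underline{\Pi}$, $\overline{\Pi}^{\mathrm{rop}}$ via Lemma~\ref{L5.8} and Proposition~\ref{P5.10}$(iii)$, the only things I need to check are (a) that $\overline{\Pi}^{\mathrm{rop}}$ is locally bi-normal, (b) that the cyclicity conditions in Definition~\ref{D5.9} and in the spherical-representation notion coincide, and (c) the spherical-vector condition, which is exactly Proposition~\ref{P5.10}.

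First I would observe that $\overline{\Pi}^{\mathrm{rop}}$ is automatically locally bi-normal: it is obtained by precomposing the locally bi-normal representation $\overline{\Pi}$ with $\mathrm{id}\otimes\hat{R}$, and $\hat{R}=\varinjlim\hat{R}_n$ is locally normal with each $\hat{R}_n$ carrying $W^*(G_n)$ onto $W^*(G_n)$, so the inductive structure $W^*(G_n)\otimes W^*(G_n)$ relative to which ``locally normal'' is interpreted is preserved. Thus condition~(a) holds with no extra work. Next I would handle cyclicity: since whether a vector is cyclic for a set of operators depends only on the von Neumann algebra it generates, the identities
\[
\underline{\Pi}(\mathcal{B}(G\times G))'' = \overline{\Pi}^{\mathrm{rop}}\big(\mathcal{B}(G)\otimes_{\max}\mathcal{B}(G)^{\mathrm{op}}\big)''
\]
recorded just above show that $\xi$ is cyclic for $\underline{\Pi}(\mathcal{B}(G\times G))$ if and only if it is cyclic for $\overline{\Pi}^{\mathrm{rop}}(\mathcal{B}(G)\otimes_{\max}\mathcal{B}(G)^{\mathrm{op}})$. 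Finally, Proposition~\ref{P5.10} (equivalence of $(i)$ and $(iii)$ there) gives precisely the equivalence of the identity $\underline{\Pi}(\hat{\Delta}(x))\xi=\hat{\varepsilon}(x)\xi$ for all $x\in\mathcal{B}(G)$ with the statement that $\xi$ is a $(\vartheta^t,-1)$-spherical vector in $\overline{\Pi}^{\mathrm{rop}}$. Combining these three observations yields $(i)\Leftrightarrow(ii)$ in both directions, symmetrically.

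I do not expect a serious obstacle here; the analytic substance is already packaged in Proposition~\ref{P5.10}, and underneath it in Proposition~\ref{P5.4} and the Phragmen--Lindel\"of argument used there. The only point deserving a little care is the compatibility of categories: one must make sure that ``locally normal $*$-representation of $\mathcal{B}(G)\otimes_{\max}\mathcal{B}(G)^{\mathrm{op}}$'' as used in the definition of spherical representation in \cite{Ueda:Preprint20} is exactly the category in which $\overline{\Pi}^{\mathrm{rop}}$ lives, which is where the remark that $\hat{R}_n$ preserves $W^*(G_n)$ enters; everything else is a direct unwinding of Definitions~\ref{D5.9} and of the previous paper.
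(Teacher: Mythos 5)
Your proposal is correct and follows essentially the same route as the paper, which states the corollary as an immediate consequence of Proposition~\ref{P5.10} together with the von Neumann algebra identities displayed just before it (which handle the cyclicity matching) and the local normality of $\hat{R}$ noted earlier. Your explicit unwinding of the two definitions and the local bi-normality of $\overline{\Pi}^{\mathrm{rop}}$ is exactly the bookkeeping the paper leaves implicit.
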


Remark that if the $\hat{\Delta}_n$ are replaced with $\hat{\Delta}_n^{\mathrm{cop}}$ as in Remarks \ref{R5.6}(3), then one obtains locally normal $(\vartheta^t,+1)$-spherical representations.

The above corollary guarantees that the general theory for $(\alpha^t,\beta)$-spherical representations developed in the previous and this papers can be applied to spherical unitary representations of $G<G\times G$ with $G = \varinjlim G_n$ as follows.

To understand $G\times G$ with $G = \varinjlim G_n$ rigorously, we have introduced four algebras $\mathcal{B}_\infty(G)\otimes\mathcal{B}_\infty(G)$, $\mathcal{B}(G\times G)$, $\mathcal{B}(G)\otimes_{\max}\mathcal{B}(G)$ and $\mathcal{B}(G)\otimes_{\max}\mathcal{B}(G)^{\mathrm{op}}$. Hence we first have to show that no differences occur among those choices of algebras to discuss unitary representations of $G\times G$. The next lemma guarantees that one can use each of those algebras to examine the unitary equivalence for unitary representations of $G\times G$.

\begin{Lemma}\label{L5.12} Let $\Pi_i$ be a unitary representation of $G\times G$ on a Hilbert space $\mathcal{H}_{\Pi_i}$ for each $i=1,2$. For any $u \in B(\mathcal{H}_{\Pi_1},\mathcal{H}_{\Pi_2})$ the following are equivalent:
\begin{itemize}\itemsep=0pt
\item[$(i)$] $u\Pi_1(x) = \Pi_2(x)u$ for all $x \in \mathcal{B}_\infty(G)\otimes\mathcal{B}_\infty(G)$.
\item[$(ii)$] $u\underline{\Pi}_1(x) = \underline{\Pi}_2(x)u$ for all $x \in \mathcal{B}(G\times G)$.
\item[$(iii)$] $u\overline{\Pi}_1(x) = \overline{\Pi}_2(x)u$ for all $x \in \mathcal{B}(G)\otimes_{\max}\mathcal{B}(G)$.
\item[$(iv)$] $u\overline{\Pi}^{\mathrm{rop}}_1(x) = \overline{\Pi}^{\mathrm{rop}}_2(x)u$ for all $x \in \mathcal{B}(G)\otimes_{\max}\mathcal{B}(G)^{\mathrm{op}}$.
\end{itemize}
\end{Lemma}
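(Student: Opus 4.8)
The plan is to prove the chain of equivalences by exploiting that all four algebras generate the same von Neumann algebra together with the fact that a bounded operator intertwines two representations if and only if it lies in the commutant, which depends only on the generated von Neumann algebra. First I would establish the auxiliary observation (already essentially used in the discussion after Proposition~\ref{P5.10}) that for a single unitary representation $\Pi$ of $G\times G$ on $\mathcal{H}_\Pi$ one has
\[
\Pi(\mathcal{B}_\infty(G)\otimes\mathcal{B}_\infty(G))''
= \underline{\Pi}(\mathcal{B}(G\times G))''
= \overline{\Pi}(\mathcal{B}(G)\otimes_{\max}\mathcal{B}(G))''
= \overline{\Pi}^{\mathrm{rop}}\big(\mathcal{B}(G)\otimes_{\max}\mathcal{B}(G)^{\mathrm{op}}\big)''.
\]
Each of these identities follows from Lemma~\ref{L5.8} together with the various commuting triangles: $\mathcal{B}_\infty(G)\otimes\mathcal{B}_\infty(G)$ is $\sigma$-weakly dense in $\mathcal{B}(G\times G)$ (it is the algebraic inductive limit of the $W^*(G_n)\otimes W^*(G_n)$, which are $\sigma$-weakly dense in the $W^*(G_n\times G_n)$), so applying local normality of $\underline{\Pi}$ on each level $W^*(G_n\times G_n)$ and taking the union over $n$ gives $\underline{\Pi}(\mathcal{B}(G\times G)) \subseteq \Pi(\mathcal{B}_\infty(G)\otimes\mathcal{B}_\infty(G))''$, and the reverse containment is trivial since $\Pi$ factors through $\underline{\Pi}$ on $\mathcal{B}_\infty(G)\otimes\mathcal{B}_\infty(G)$; the double-commutant theorem then yields equality. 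The identity with $\overline{\Pi}$ is immediate because $\overline{\Pi}$ restricts to $\Pi$ on the $\sigma$-weakly dense subalgebra $\mathcal{B}_\infty(G)\otimes\mathcal{B}_\infty(G)$; and the one with $\overline{\Pi}^{\mathrm{rop}}$ follows because $\mathrm{id}\otimes\hat{R}$ is a $*$-anti-automorphism of $\mathcal{B}(G)\otimes_{\max}\mathcal{B}(G)$ restricted to the appropriate opposite algebra, so it does not change the range.

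Granting this, the proof of Lemma~\ref{L5.12} is a short deduction. Each condition $(i)$--$(iv)$ asserts that $u$ intertwines $\Pi_1$ and $\Pi_2$ when these are regarded as representations of the respective algebra; equivalently, writing $\mathcal{H} := \mathcal{H}_{\Pi_1}\oplus\mathcal{H}_{\Pi_2}$ and $\Pi := \Pi_1\oplus\Pi_2$ (and correspondingly $\underline{\Pi}$, $\overline{\Pi}$, $\overline{\Pi}^{\mathrm{rop}}$), condition $(i)$ says that the operator $\begin{bmatrix} 0 & 0 \\ u & 0\end{bmatrix}$ lies in $\Pi(\mathcal{B}_\infty(G)\otimes\mathcal{B}_\infty(G))'$, and similarly for the other three. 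But the commutants of the four algebras coincide, since by the displayed identities the generated von Neumann algebras on $\mathcal{H}$ coincide, and the commutant of a $*$-subalgebra equals the commutant of the von Neumann algebra it generates. Hence the four membership conditions are all equivalent, which is exactly the assertion of the lemma. One can also argue more directly, without passing to the direct sum: $(ii)\Rightarrow(i)$, $(iii)\Rightarrow(i)$ and $(iv)\Rightarrow(i)$ are trivial restrictions; $(i)\Rightarrow(iii)$ and $(i)\Rightarrow(iv)$ hold because $\overline{\Pi}_i$ and $\overline{\Pi}^{\mathrm{rop}}_i$ are determined by $\Pi_i$ on the dense subalgebra and norm-continuity propagates the intertwining relation; and $(i)\Rightarrow(ii)$ is obtained by fixing $x \in W^*(G_n\times G_n)$, using that $u$ intertwines $\Pi_1$ and $\Pi_2$ on the $\sigma$-weakly dense $W^*(G_n)\otimes W^*(G_n)$, and invoking normality of $\underline{\Pi}_1$, $\underline{\Pi}_2$ together with separate $\sigma$-weak continuity of multiplication (so that $\langle u\underline{\Pi}_1(x)\eta, \zeta\rangle = \langle \underline{\Pi}_2(x)u\eta, \zeta\rangle$ passes to the limit along a net in $W^*(G_n)\otimes W^*(G_n)$ converging $\sigma$-weakly to $x$).

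The only mildly delicate point — and the one I would expect to require the most care — is the justification of $(i)\Rightarrow(ii)$, i.e.\ upgrading an intertwining relation from the algebraic tensor product $W^*(G_n)\otimes W^*(G_n)$ to all of $W^*(G_n\times G_n) = W^*(G_n)\,\bar{\otimes}\,W^*(G_n)$. This is exactly the content already extracted in the proof of Lemma~\ref{L5.8}: because $W^*(G_n)$ is the atomic $W^*$-algebra $\bigoplus_{z}B(\mathcal{H}_z)$ with each $\mathcal{H}_z$ finite-dimensional, every minimal central projection of $W^*(G_n\times G_n)$ is of the form $z_1\otimes z_2$ and already lies in the algebraic tensor product, so the block decomposition $\underline{\Pi}_i(x) = \bigoplus_{(z_1,z_2)} \Pi_i((z_1\otimes z_2)x)$ reduces everything to the finite-dimensional corners $(z_1\otimes z_2)W^*(G_n\times G_n) = B(\mathcal{H}_{z_1})\otimes B(\mathcal{H}_{z_2})$, on which $\Pi_i$ and $\underline{\Pi}_i$ literally agree; the intertwining relation on these corners is then an immediate consequence of $(i)$. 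Everything else is routine: density, double commutant, and separate $\sigma$-weak continuity of multiplication.
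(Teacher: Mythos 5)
Your proposal is correct, and in fact it contains the paper's argument as its ``more direct'' variant: the paper proves the cycle $(i)\Rightarrow(ii)\Rightarrow(iii)\Rightarrow(i)$ together with $(iii)\Leftrightarrow(iv)$, using exactly the ingredients you list there --- for $(i)\Rightarrow(ii)$ the local normality of $\underline{\Pi}_i$ and the $\sigma$-weak density of $W^*(G_n)\otimes W^*(G_n)$ in $W^*(G_n\times G_n)$ (equivalently your reduction to the finite-dimensional corners $(z_1\otimes z_2)W^*(G_n\times G_n)$ from the proof of Lemma~\ref{L5.2}), then norm-density of $\bigcup_n W^*(G_n\times G_n)$ in $\mathcal{B}(G\times G)$ and of $\mathcal{B}_\infty(G)\otimes\mathcal{B}_\infty(G)$ in $\mathcal{B}(G)\otimes_{\max}\mathcal{B}(G)$, and finally the definition of $\overline{\Pi}^{\mathrm{rop}}_i$ via the $*$-isomorphism $\mathrm{id}\otimes\hat{R}$. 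Your leading route --- identifying intertwiners with the lower-left corner of the commutant of $\Pi_1\oplus\Pi_2$ and invoking the coincidence of the four generated von Neumann algebras --- is a legitimate repackaging rather than new content: that coincidence (stated in the paper, for a single representation, just before Corollary~\ref{C5.11}) is itself proved by the very same density/normality considerations, and to use it here you must additionally check that the canonical extensions of $\Pi_1\oplus\Pi_2$ are $\underline{\Pi}_1\oplus\underline{\Pi}_2$ and $\overline{\Pi}_1\oplus\overline{\Pi}_2$ (which does follow from the uniqueness/density built into Lemmas~\ref{L5.2} and~\ref{L5.8}, but should be said). So the commutant route buys a slicker statement at the cost of these extra verifications, while the paper's (and your second) route is the shorter self-contained argument; either is acceptable.
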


\begin{proof}
$(i)\Rightarrow (ii)$: Notice that
\[
\mathcal{B}_\infty(G)\otimes\mathcal{B}_\infty(G) \supset W^*(G_n)\otimes W^*(G_n) \subset W^*(G_n\times G_n) \subset \mathcal{B}(G\times G)
\]
and $\underline{\Pi}_i(x) = \Pi_i(x)$ holds for every $x \in W^*(G_n)\otimes W^*(G_n)$. Assume that item $(i)$ holds. By~the local normality we observe that $u\underline{\Pi}_1(x) = \underline{\Pi}_2(x)u$ holds for every $x \in W^*(G_n\times G_n)$, $n\geq0$. Then, by the norm-continuity we obtain item $(ii)$.

$(ii)\Rightarrow (iii)$: Assume that item $(ii)$ holds. Since
\[
\mathcal{B}_\infty(G)\otimes\mathcal{B}_\infty(G) = \varinjlim W^*(G_n)\otimes W^*(G_n) \hookrightarrow \mathcal{B}(G\times G)
\]
naturally, we see that $\overline{\Pi}_i(x) = \underline{\Pi}_i(x)$ holds for every $x \in \mathcal{B}_\infty(G)\otimes\mathcal{B}_\infty(G)$ and hence $u\overline{\Pi}_1(x) = \overline{\Pi}_2(x)u$ holds for every $x \in \mathcal{B}_\infty(G)\otimes\mathcal{B}_\infty(G)$. By the norm-continuity we obtain item $(iii)$.

$(iii)\Rightarrow(i)$: This is trivial, because $\mathcal{B}_\infty(G)\otimes\mathcal{B}_\infty(G) \subset \mathcal{B}(G)\otimes_{\max}\mathcal{B}(G)$ and $\overline{\Pi}_i$ and $\Pi_i$ agree on $\mathcal{B}_\infty(G)\otimes\mathcal{B}_\infty(G)$.

$(iii)\Leftrightarrow (iv)$: This follows from the definition of $\overline{\Pi}^\mathrm{rop}_i$.
\end{proof}

Let $\chi \in K_{-1}^\mathrm{ln}(\vartheta^t)$ be given, and $\pi_\chi \colon \mathcal{B}(G) \curvearrowright \mathcal{H}_\chi$ with a distinguished unit vector $\xi_\chi \in \mathcal{H}_\chi$ be the GNS triple associated with $\chi$. Let $J_\chi$ be the modular conjugation operator for $(\pi_\chi(\mathcal{B}(G))'',\xi_\chi)$. Then a locally bi-normal $*$-representation $\pi_\chi^{(2)} \colon \mathcal{B}(G)\otimes_{\max}\mathcal{B}(G)^\mathrm{op} \curvearrowright \mathcal{H}_\chi$ is defined by
\begin{equation*}
\pi_\chi^{(2)}\big(x\otimes y^\mathrm{op}\big) := \pi_\chi(x)J_\chi\pi_\chi(y)^* J_\chi, \qquad x,y \in \mathcal{B}(G).
\end{equation*}
See \cite[Theorem~5.7(1)]{Ueda:Preprint20}. We then define the other locally bi-normal $*$-representation $\Pi_\chi \colon \mathcal{B}_\infty(G)\allowbreak\otimes\mathcal{B}_\infty(G) \curvearrowright \mathcal{H}_\chi$ by
\begin{equation*}
\Pi_\chi(x\otimes y) = \pi_\chi^{(2)}\big(x\otimes\big(\hat{R}(y)\big)^\mathrm{op}\big), \qquad x,y \in \mathcal{B}(G).
\end{equation*}
Thanks to Lemma \ref{L5.8} we obtain a unique locally bi-normal $*$-representation $\overline{\Pi}_\chi \colon \mathcal{B}(G)\otimes_{\max}\mathcal{B}(G) \curvearrowright \mathcal{H}_\chi$ and a unique locally normal $*$-representation $\underline{\Pi}_\chi \colon \mathcal{B}(G\times G) \curvearrowright \mathcal{H}_\chi$ in such a way that
\begin{equation*}
\overline{\Pi}_\chi(x) = \underline{\Pi}_\chi(x) = \Pi_\chi(x)
\end{equation*}
holds for every $x \in \mathcal{B}_\infty(G)\otimes\mathcal{B}_\infty(G)$. Remark that the identity $\overline{\Pi}_\chi^\mathrm{rop} = \pi_\chi^{(2)}$ trivially holds. Hence, \cite[Theorem 5.7(1)]{Ueda:Preprint20} together with Corollary \ref{C5.11} and Lemma \ref{L5.12} immediately implies the following:

\begin{Theorem}\label{T5.13} For each $\chi \in K_{-1}^\mathrm{ln}(\vartheta^t)$, the associated triple $(\Pi_\chi, \mathcal{H}_\chi,\xi_\chi)$ is a spherical unitary representation of $G<G\times G$, and moreover, the correspondence $\chi \leftrightarrow [(\Pi_\chi, \mathcal{H}_\chi,\xi_\chi)]$ gives a~bijection between $K_{-1}^\mathrm{ln}(\vartheta^t)$ and the unitary equivalence classes of spherical unitary representation of $G<G\times G$.
\end{Theorem}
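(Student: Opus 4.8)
The plan is to obtain the statement as a formal consequence of \cite[Theorem~5.7(1)]{Ueda:Preprint20}, Corollary~\ref{C5.11} and Lemma~\ref{L5.12}, with essentially no new computation beyond bookkeeping. First I would restate \cite[Theorem~5.7(1)]{Ueda:Preprint20} in the present notation: applied to the $C^*$-flow $\vartheta^t$ on $\mathcal{B}(G)$ at inverse temperature $\beta=-1$, it asserts that $\chi\mapsto\big(\pi_\chi^{(2)},\mathcal{H}_\chi,\xi_\chi\big)$ is a bijection from $K_{-1}^\mathrm{ln}(\vartheta^t)$ onto the unitary equivalence classes of locally normal $(\vartheta^t,-1)$-spherical representations of $\mathcal{B}(G)$ (with distinguished cyclic spherical vector). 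Since $\overline{\Pi}_\chi^\mathrm{rop}=\pi_\chi^{(2)}$ holds by construction, Corollary~\ref{C5.11} converts ``$\big(\pi_\chi^{(2)},\mathcal{H}_\chi,\xi_\chi\big)$ is a locally normal $(\vartheta^t,-1)$-spherical representation of $\mathcal{B}(G)$'' into ``$(\Pi_\chi,\mathcal{H}_\chi,\xi_\chi)$ is a spherical unitary representation of $G<G\times G$''; this gives the first assertion and shows that $\chi\mapsto[(\Pi_\chi,\mathcal{H}_\chi,\xi_\chi)]$ is well defined.

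For the bijection I would argue in two steps. Injectivity: if $u$ is a unitary with $u\xi_{\chi_1}=\xi_{\chi_2}$ implementing a unitary equivalence $\Pi_{\chi_1}\cong\Pi_{\chi_2}$ on $\mathcal{B}_\infty(G)\otimes\mathcal{B}_\infty(G)$, then by the equivalence of items $(i)$ and $(iv)$ in Lemma~\ref{L5.12} the same $u$ intertwines $\overline{\Pi}_{\chi_1}^\mathrm{rop}=\pi_{\chi_1}^{(2)}$ and $\overline{\Pi}_{\chi_2}^\mathrm{rop}=\pi_{\chi_2}^{(2)}$, so $\big(\pi_{\chi_1}^{(2)},\mathcal{H}_{\chi_1},\xi_{\chi_1}\big)$ and $\big(\pi_{\chi_2}^{(2)},\mathcal{H}_{\chi_2},\xi_{\chi_2}\big)$ are unitarily equivalent; the injectivity half of \cite[Theorem~5.7(1)]{Ueda:Preprint20} then forces $\chi_1=\chi_2$. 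Surjectivity: given an arbitrary spherical unitary representation $(\Pi,\mathcal{H}_\Pi,\xi)$ of $G<G\times G$, Corollary~\ref{C5.11} tells us $\big(\overline{\Pi}^\mathrm{rop},\mathcal{H}_\Pi,\xi\big)$ is a locally normal $(\vartheta^t,-1)$-spherical representation of $\mathcal{B}(G)$, hence by \cite[Theorem~5.7(1)]{Ueda:Preprint20} there are $\chi\in K_{-1}^\mathrm{ln}(\vartheta^t)$ and a unitary $u$ with $u\xi=\xi_\chi$ intertwining $\overline{\Pi}^\mathrm{rop}$ and $\pi_\chi^{(2)}=\overline{\Pi}_\chi^\mathrm{rop}$; the implication $(iv)\Rightarrow(i)$ of Lemma~\ref{L5.12} then upgrades $u$ to a unitary equivalence of $\Pi$ and $\Pi_\chi$ carrying $\xi$ to $\xi_\chi$, so $[(\Pi,\mathcal{H}_\Pi,\xi)]$ lies in the image.

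I expect the only delicate point to be the compatibility of the various notions of unitary equivalence and cyclicity across the four algebras $\mathcal{B}_\infty(G)\otimes\mathcal{B}_\infty(G)$, $\mathcal{B}(G\times G)$, $\mathcal{B}(G)\otimes_{\max}\mathcal{B}(G)$ and $\mathcal{B}(G)\otimes_{\max}\mathcal{B}(G)^\mathrm{op}$ --- i.e.\ making sure that a unitary equivalence of spherical unitary representations (a unitary intertwining $\Pi$ on $\mathcal{B}_\infty(G)\otimes\mathcal{B}_\infty(G)$ and matching spherical vectors) corresponds, under the passage from $\Pi$ to $\overline{\Pi}^\mathrm{rop}$, to the notion used in \cite[Theorem~5.7(1)]{Ueda:Preprint20}. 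This is exactly what Lemma~\ref{L5.12} is designed to handle, so the ``obstacle'' is already dispatched; similarly, the two cyclicity requirements (for $\underline{\Pi}(\mathcal{B}(G\times G))$ versus for $\overline{\Pi}^\mathrm{rop}\big(\mathcal{B}(G)\otimes_{\max}\mathcal{B}(G)^\mathrm{op}\big)$) agree because of the identity of the generated von Neumann algebras recorded just after Proposition~\ref{P5.10}. With these observations in place, the theorem follows without further work.
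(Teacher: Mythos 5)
Your proposal is correct and follows exactly the paper's route: the paper derives Theorem \ref{T5.13} as an immediate consequence of \cite[Theorem~5.7(1)]{Ueda:Preprint20} combined with Corollary~\ref{C5.11} and Lemma~\ref{L5.12}, via the identity $\overline{\Pi}_\chi^{\mathrm{rop}}=\pi_\chi^{(2)}$, which is precisely the bookkeeping you carry out. Your spelled-out injectivity/surjectivity argument is just the explicit version of what the paper leaves implicit.
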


In closing of this section, we discuss the spectral decomposition for spherical unitary representations of $G < G\times G$ in terms of $\mathcal{B}(G\times G)$. Let $(\Pi,\mathcal{H}_\Pi,\xi)$ be a spherical unitary representation of $G<G\times G$. By \cite[Theorem 8.4(3)]{Ueda:Preprint20} there is a unique Borel probability measure $\mu$ on the extreme points $\mathrm{ex}\big(K_{-1}^\mathrm{ln}(\vartheta^t)\big)$ so that
\[
\big(\overline{\Pi}^\mathrm{rop},\mathcal{H}_\Pi,\xi\big) = \int^\oplus_{\mathrm{ex}(K_{-1}^\mathrm{ln}(\vartheta^t))} \big(\pi_\chi^{(2)}, \mathcal{H}_\chi,\xi_\chi\big)\,\mu({\rm d}\chi).
\]
It follows that
\[
\big(\overline{\Pi},\mathcal{H}_\Pi,\xi\big) = \int^\oplus_{\mathrm{ex}(K_{-1}^\mathrm{ln}(\vartheta^t))} \big(\overline{\Pi}_\chi, \mathcal{H}_\chi,\xi_\chi\big)\,\mu({\rm d}\chi),
\]
and hence
\[
\underline{\Pi}(x\otimes y) = \overline{\Pi}(x\otimes y) = \int^\oplus_{\mathrm{ex}(K_{-1}^\mathrm{ln}(\vartheta^t))} \overline{\Pi}_\chi(x\otimes y)\,\mu({\rm d}\chi) = \int^\oplus_{\mathrm{ex}(K_{-1}^\mathrm{ln}(\vartheta^t))} \underline{\Pi}_\chi(x\otimes y)\,\mu({\rm d}\chi)
\]
holds for all $x,y \in \mathcal{B}_\infty(G)$.
Since the $\sigma$-weak density of each $W^*(G_n)\otimes W^*(G_n)$ in $W^*(G_n\times G_n)$ together with the local normality of $\underline{\Pi}_\chi$ plus the Kaplansky density theorem and then since the norm-density of $\bigcup_{n\geq0} W^*(G_n\times G_n)$ in $\mathcal{B}(G\times G)$, we can easily confirm that $\chi \mapsto \underline{\Pi}_\chi(x)$ is Borel for every $x \in \mathcal{B}(G\times G)$, and obtain that
\[
 \underline{\Pi}(x) = \int^\oplus_{\mathrm{ex}(K_{-1}^\mathrm{ln}(\vartheta^t))} \underline{\Pi}_\chi(x)\,\mu({\rm d}\chi), \qquad x \in \mathcal{B}(G\times G).
\]
In this way, \emph{all the formulations $\Pi$, $\underline{\Pi}$, $\overline{\Pi}$ and $\overline{\Pi}^\mathrm{rop}$ of a spherical unitary representation of $G < G\times G$ admit a simultaneous spectral decomposition with a unique probability measure over the extreme points $\mathrm{ex}\big(K_{-1}^\mathrm{ln}(\vartheta^t)\big)$}.

\subsection{Quantized character functions} 

So far, we have provided all the necessary ingredients to discuss the (spherical) unitary representation theory for $G = \varinjlim G_n$. As a consequence, we saw that the study of (spherical) unitary representations comes down to that of locally normal $(\vartheta^t,-1)$-KMS states. Thus, one may regard such a state as a quantum analog of character, and this idea was indeed employed by Sato in \cite{Sato:JFA19} (and his later works) to develop the asymptotic representation theory for quantum groups (without appealing to spherical representations). In our opinion, it is more appropriate to formulate quantized characters as ``functions over $G$''. This aspect was already considered by Sato \cite[Section 3.3]{Sato:JFA19} based on Arano's suggestion to him. However, Sato's discussion there is not entirely satisfactory (at least to us), because his discussion depends on Gorin's work \cite{Gorin:AdvMath12} and is performed only in a special case. Thus, we give here a new formulation of quantized characters of $G$, which enables us to give a (hopefully nice) interpretation to Gorin's work in terms of quantum groups in the next section.

Let $C(G_n)$ be the $C^*$-enveloping algebra of $\mathbb{C}[G_n]$. See \cite[Section 1.7]{NeshveyevTuset:Book13}, where the symbol $C_u(G_n)$ is used instead. Then the surjective $*$-homomorphism $\mathbb{C}[G_{n+1}] \twoheadrightarrow \mathbb{C}[G_n]$ extends to the $C^*$-level, i.e., $r_n^{n+1} \colon C(G_{n+1}) \twoheadrightarrow C(G_n)$.
Consider the $\sigma$-$C^*$-algebra
\[
C(G) := \varprojlim C(G_n) = \bigg\{ (x_n) \in \prod_{n\geq0} C(G_n);\,  r_n^{n+1}(x_{n+1}) = x_n  \ \text{for all} \ n\geq0 \bigg\},
\]
in which we can construct a ``function'' associated with any $\chi \in K_{-1}^\mathrm{ln}(\vartheta^t)$. This type of algebras was considered in \cite{MahantaMathai:LMP11} to understand the quantized universal gauge group ${\rm SU}_q(\infty)$ in Woro\-no\-wicz's formalism. We remark that if all the $G_n$ are ordinary groups, then $C(G)$ is exactly identified with the algebra of continuous functions over the topological group $G = \varinjlim G_n$.

We will use the bijective $*$-isomorphism $\Phi_{U_\bullet}$; see \eqref{Eq5.1}. We define a unitary element $u_z \in zW^*(G_n)\otimes\mathbb{C}[G_n] \subset W^*(G_n)\otimes\mathbb{C}[G_n]$ by $\big(\Phi_{U_\bullet}^{-1}\otimes\mathrm{id}\big)(U_z)$, where we regard $U_z \in B(\mathcal{H}_{U_z})\otimes\mathbb{C}[G_n] \subset (\prod_{z\in\mathfrak{Z}_n}B(\mathcal{H}_{U_z}))\otimes\mathbb{C}[G_n]$ naturally. It is easy (see the proof of the lemma below) to confirm that this unitary element $u_z \in zW^*(G_n)\otimes\mathbb{C}[G_n]$ depends only on the unitary equivalence class of $U_z$. Here is a technical lemma.

\begin{Lemma}\label{L5.14} For every $z \in \mathfrak{Z}_{n+1}$, we have
\[
\big(\mathrm{id}\otimes r_n^{n+1}\big)(u_z) = \sum_{z' \in \mathfrak{Z}_n} (z\otimes1)u_{z'},
\]
a finite sum in $zW^*(G_n)\otimes\mathbb{C}[G_n] \subset zW^*(G_{n+1})\otimes\mathbb{C}[G_n]$.
\end{Lemma}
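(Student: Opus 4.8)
The idea is to compose with the $*$-isomorphism $\pi_z\colon zW^*(G_{n+1})\to B(\mathcal{H}_{U_z})$ afforded by \eqref{Eq5.1} and to recognize every term in sight as (a piece of) the restriction of $U_z$ to $G_n$. First note that both sides of the claimed identity lie in $zW^*(G_{n+1})\otimes\mathbb{C}[G_n]$: the left-hand side because $u_z\in zW^*(G_{n+1})\otimes\mathbb{C}[G_{n+1}]$, and the right-hand side because $u_{z'}=(z'\otimes1)u_{z'}$ gives $(z\otimes1)u_{z'}=(zz'\otimes1)u_{z'}$ with $zz'$ a projection in $zW^*(G_{n+1})$ that vanishes unless $(z,z')$ is an edge of the Bratteli diagram, so the sum has only finitely many nonzero terms. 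Since $\pi_z$ is a $*$-isomorphism of $zW^*(G_{n+1})$, the map $\pi_z\otimes\mathrm{id}$ is injective on $zW^*(G_{n+1})\otimes\mathbb{C}[G_n]$ (an algebraic tensor product), so it suffices to show that the two sides have the same image under $\pi_z\otimes\mathrm{id}$. For the left-hand side, $u_z=\big(\Phi_{U_\bullet}^{-1}\otimes\mathrm{id}\big)(U_z)$ with $U_z\in B(\mathcal{H}_{U_z})\otimes\mathbb{C}[G_{n+1}]$ placed in the $z$-th coordinate, so $(\pi_z\otimes\mathrm{id})(u_z)=U_z$, and hence
\[
(\pi_z\otimes\mathrm{id})\big((\mathrm{id}\otimes r_n^{n+1})(u_z)\big)=\big(\mathrm{id}\otimes r_n^{n+1}\big)(U_z)=:U_z\!\upharpoonright_{G_n},
\]
the restricted unitary representation of $G_n$ on $\mathcal{H}_{U_z}$ (recall that $r_n^{n+1}$ restricts to the Hopf $*$-algebra surjection $\mathbb{C}[G_{n+1}]\twoheadrightarrow\mathbb{C}[G_n]$, which carries unitary representations to unitary representations).

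Next, since the embedding $\mathcal{U}(G_n)\hookrightarrow\mathcal{U}(G_{n+1})$ is dual to $r_n^{n+1}\!\upharpoonright_{\mathbb{C}[G_{n+1}]}$, for $x\in\mathcal{U}(G_n)$ one has $\pi_z(x)=(\mathrm{id}\otimes x)(U_z)=(\mathrm{id}\otimes x)(U_z\!\upharpoonright_{G_n})$; that is, $\pi_z\!\upharpoonright_{W^*(G_n)}$ is the $*$-representation afforded by $U_z\!\upharpoonright_{G_n}$. By semisimplicity of the representation category of $G_n$, fix a unitary $T$ implementing an isotypic decomposition $U_z\!\upharpoonright_{G_n}=(T\otimes1)\big(\bigoplus_{z'}U_{z'}\otimes 1_{\mathbb{C}^{m_{z'}}}\big)(T^*\otimes1)$, where the multiplicity $m_{z'}$ is nonzero only for the finitely many $z'\in\mathfrak{Z}_n$ linked to $z$ in the Bratteli diagram. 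Under this decomposition $\pi_z$ carries the summand $z'W^*(G_n)\cong B(\mathcal{H}_{U_{z'}})$ onto the operators supported on the $z'$-isotypic subspace, acting there by $b\mapsto T(b\otimes1_{\mathbb{C}^{m_{z'}}})T^*$, and it sends $zz'$ to the orthogonal projection onto that subspace. Feeding $u_{z'}=\big(\Phi_{U_\bullet}^{-1}\otimes\mathrm{id}\big)(U_{z'})$ (for $G_n$) into $\pi_z\otimes\mathrm{id}$ and using $\pi_z(z)=1$, one obtains
\[
(\pi_z\otimes\mathrm{id})\big((z\otimes1)u_{z'}\big)=(T\otimes1)\big(U_{z'}\otimes1_{\mathbb{C}^{m_{z'}}}\big)(T^*\otimes1),
\]
which is exactly the $z'$-isotypic block of $U_z\!\upharpoonright_{G_n}$.

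Summing over $z'\in\mathfrak{Z}_n$ reassembles all the isotypic blocks, so $(\pi_z\otimes\mathrm{id})\big(\sum_{z'}(z\otimes1)u_{z'}\big)=U_z\!\upharpoonright_{G_n}$, which agrees with the image of the left-hand side computed above. Injectivity of $\pi_z\otimes\mathrm{id}$ on $zW^*(G_{n+1})\otimes\mathbb{C}[G_n]$ then yields the asserted identity, and the right-hand side is manifestly a finite sum inside $zW^*(G_n)\otimes\mathbb{C}[G_n]$. I expect the only delicate point to be the bookkeeping in the second paragraph: matching the corner decomposition of $zW^*(G_n)\subseteq W^*(G_{n+1})$ against the isotypic decomposition of $U_z\!\upharpoonright_{G_n}$ and correctly tracking the multiplicity amplifications $b\mapsto b\otimes1_{\mathbb{C}^{m_{z'}}}$; everything else is routine unwinding of the identifications fixed around \eqref{Eq5.1}.
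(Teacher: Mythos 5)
Your proof is correct, but it takes a genuinely different route from the paper's. The paper never chooses an isotypic decomposition: it embeds $zW^*(G_{n+1})\otimes\mathbb{C}[G_{n+1}]$ and $zW^*(G_{n+1})\otimes\mathbb{C}[G_n]$ into the spaces of linear maps $L(\mathcal{U}(G_{n+1}),z\,\mathcal{U}(G_{n+1}))$ and $L(\mathcal{U}(G_n),z\,\mathcal{U}(G_{n+1}))$ via $(a\otimes b)x=x(b)\,a$, observes that $u_z$ becomes the map $x\mapsto zx$ (and likewise $u_{z'}y=z'y$ at level $n$), and then computes directly for $y\in\mathcal{U}(G_n)$ that $\big(\mathrm{id}\otimes r_n^{n+1}\big)(u_z)y=z\,{}^t r_n^{n+1}(y)=\sum_{z'}z\,{}^t r_n^{n+1}(u_{z'}y)=\sum_{z'}\big((z\otimes1)u_{z'}\big)y$, the finiteness of the sum coming from $\dim zW^*(G_{n+1})<+\infty$. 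You instead push everything through the faithful map $\pi_z\otimes\mathrm{id}$ and identify both sides with the restricted corepresentation $\big(\mathrm{id}\otimes r_n^{n+1}\big)(U_z)$, the right-hand side via complete reducibility and a decomposition unitary $T$. Both arguments rest on the same fact, namely that the corner embedding $W^*(G_n)\hookrightarrow W^*(G_{n+1})$ realizes restriction of representations, but the paper's pairing argument needs no choice of $T$ and no multiplicity bookkeeping (its only input is $u_z x=zx$), whereas yours makes the branching explicit and in particular substantiates directly the remark after the lemma that the multiplicities $m_{z'}$ are absorbed into the embedding rather than appearing as coefficients. The two points you flag as delicate are indeed the ones to spell out, but they are not gaps: $\pi_z(b)=T\big(b\otimes1_{\mathbb{C}^{m_{z'}}}\big)T^*$ for $b\in z'W^*(G_n)$ follows from $\pi_z\!\upharpoonright_{\mathcal{U}(G_n)}=\pi_{U_z\upharpoonright G_n}$ together with $\pi_{U_{z''}}(b)=0$ for $z''\neq z'$, and the finiteness of the sum again follows from the finite-dimensionality of $zW^*(G_{n+1})$.
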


The above identity is essentially the irreducible decomposition of the restriction of $U_z$ to~$G_n$. The reason why the information of multiplicities does not appear in the above identity is that it is built in the embedding $W^*(G_n) \hookrightarrow W^*(G_{n+1})$. Actually, the multiplicity of $U_{z'}$ in $U_z$ is exactly $\mathrm{Tr}(zz')$ with a (unique) non-normalized trace $\mathrm{Tr}$ on $zW^*(G_{n+1}) \cong B(\mathcal{H}_{U_z})$; this is consistent with Bratteli diagram description of the inductive sequence $W^*(G_n)$, see Section \ref{S2} or \cite[Section 9]{Ueda:Preprint20} in detail. We remark that the essentially same observation has been given in \cite[Lemma 2.10]{Tomatsu:CMP07} in a slightly different language.

\begin{proof} It is convenient here to write the embedding map of $\mathcal{U}(G_n) \hookrightarrow \mathcal{U}(G_{n+1})$ explicitly. The embedding map is the transpose ${}^t r_n^{n+1} \colon \mathcal{U}(G_n) \to \mathcal{U}(G_{n+1})$ of $r_n^{n+1} \colon \mathbb{C}[G_{n+1}] \twoheadrightarrow \mathbb{C}[G_n]$, and it also induces the embedding $W^*(G_n) \hookrightarrow W^*(G_{n+1})$. With the map ${}^t r_n^{n+1}$ we will show that
\[
\big(\mathrm{id}\otimes r_n^{n+1}\big)(u_z) = \sum_{z' \in \mathfrak{Z}_n} (z\otimes1)\big({}^t r_n^{n+1}\otimes\mathrm{id}\big)(u_{z'}).
\]

Note that $z\mathcal{U}(G_{n+1}) = z W^*(G_{n+1})$ by definition. We will compute the both sides of the above desired identity in the following framework:
\[
\xymatrix{
zW^*(G_{n+1})\otimes\mathbb{C}[G_{n+1}]
\ar@{->}[d]_{\mathrm{id}\otimes r_n^{n+1}}
\ar@{^{(}-_{>}}[r]
& L(\mathcal{U}(G_{n+1}), z\,\mathcal{U}(G_{n+1}))
\ar@{->}[d]^{T \mapsto T\circ {}^t r_n^{n+1}} \\
zW^*(G_{n+1})\otimes\mathbb{C}[G_n]
\ar@{^{(}-_{>}}[r] \ar@{}[ur]|{\circlearrowright}
& L(\mathcal{U}(G_n), z\,\mathcal{U}(G_{n+1})).
}
\]
Here, $L(V,W)$ stands for all the linear maps from a vector space $V$ to another $W$. The horizontal embeddings are given by $(a\otimes b)x = x(b)\,a$ for any element $x$ of $\mathcal{U}(G_{n+1})$ and $\mathcal{U}(G_n)$, respectively.

We first remark that $u_z \in zW^*(G_{n+1})\otimes\mathbb{C}[G_{n+1}]$ becomes the mapping $x \in \mathcal{U}(G_{n+1}) \mapsto zx \in z\,\mathcal{U}(G_{n+1})$ as an element of $L(\mathcal{U}(G_{n+1}), z\,\mathcal{U}(G_{n+1}))$. In fact, for every $x \in \mathcal{U}(G_{n+1})$, we have
\begin{equation}\label{Eq5.15}
u_z x = \big(\Phi_{U_\bullet}^{-1}\otimes\mathrm{id}\big)(U_z)x
= x((U_z)_{(2)}) \Phi_{U_\bullet}^{-1}((U_z)_{(1)})
= \Phi_{U_\bullet}^{-1}(\pi_{U_z}(x)) = zx
\end{equation}
with Sweedler's notation $U_z = (U_z)_{(1)}\otimes(U_z)_{(2)}$, where we regard
\[
\pi_{U_z}(x) \in B(\mathcal{H}_{U_z}) \subset \prod_{z\in\mathfrak{Z}_{n+1}} B(\mathcal{H}_{U_z})
\]
naturally as before.

Since $zW^*(G_{n+1})$ is finite dimensional, we see that $z\,{}^t r_n^{n+1}(z') \neq 0$ for only finitely many $z' \in \mathfrak{Z}_n$. Then, for every $y \in \mathcal{U}(G_n)$ we have\pagebreak
\begin{align*}
\big(\mathrm{id}\otimes r_n^{n+1}\big)(u_z)y
&= u_z {}^t r_n^{n+1}(y) = z\, {}^t r_n^{n+1}(y) \\
&= \sum_{z' \in \mathfrak{Z}_n} z\, {}^t r_n^{n+1}(z' y) \\
&= \sum_{z' \in \mathfrak{Z}_n} z\, {}^t r_n^{n+1}(u_{z'}y) \quad \text{(as in \eqref{Eq5.15})}\\
&= \sum_{z' \in \mathfrak{Z}_n} y((u_{z'})_{(2)}) z\, {}^t r_n^{n+1}((u_{z'})_{(1)})\\
&= \sum_{z' \in \mathfrak{Z}_n} \big((z\otimes 1)\big({}^t r_z^{n+1}\otimes\mathrm{id}\big)(u_{z'})\big)y
\end{align*}
with Sweedler's notation $u_{z'} = (u_{z'})_{(1)}\otimes (u_{z'})_{(2)}$. This shows the desired identity.
\end{proof}

Let $\omega \in S^\mathrm{ln}(\mathcal{B}(G))$, the convex set of all locally normal states on $\mathcal{B}(G)$, be arbitrarily given. For each $z \in \mathfrak{Z}_n$ we define $\omega(u_z) \in C(G_n)$ by $\omega(u_z) := (\omega\otimes\mathrm{id})(u_z) \in \mathbb{C}[G_n] \subset C(G_n)$. Since
\[
\Vert\omega(u_z)\Vert = \omega(z)\big\Vert\big(\big(\omega(z)^{-1}\omega\big)\otimes\mathrm{id}\big)(u_z)\big\Vert \leq \omega(z)
\]
and since $\sum_{z\in\mathfrak{Z}_n}\omega(z) = 1$ thanks to the local normality of $\omega$, we see that
\[
\omega(u_n) := \sum_{z\in\mathfrak{Z}_n} \omega(u_z)
\]
converges in $C(G_n)$ with respect to the norm topology.

\begin{Proposition}\label{P5.15} For every $n\geq0$, $r_n^{n+1}(\omega(u_{n+1})) = \omega(u_n)$ holds. Hence the sequence $(\omega(u_n))$ defines an element $\omega(u) \in C(G)$.
\end{Proposition}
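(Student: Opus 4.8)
The plan is to reduce the identity to the branching relation of Lemma~\ref{L5.14} together with the local normality of $\omega$. First I would record two easy preliminaries. Since $\Vert\omega(u_z)\Vert\le\omega(z)$ and $\sum_{z\in\mathfrak{Z}_{n+1}}\omega(z)=1$, the series $\omega(u_{n+1})=\sum_{z\in\mathfrak{Z}_{n+1}}\omega(u_z)$ converges absolutely in norm in $C(G_{n+1})$, and as $r_n^{n+1}$ is a $*$-homomorphism, hence norm-contractive,
\[
r_n^{n+1}(\omega(u_{n+1}))=\sum_{z\in\mathfrak{Z}_{n+1}}r_n^{n+1}(\omega(u_z)),
\]
again an absolutely norm-convergent series. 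Next, for each $z\in\mathfrak{Z}_{n+1}$, from $\omega(u_z)=(\omega\otimes\mathrm{id})(u_z)$ and Lemma~\ref{L5.14} one gets
\[
r_n^{n+1}(\omega(u_z))=(\omega\otimes\mathrm{id})\big((\mathrm{id}\otimes r_n^{n+1})(u_z)\big)=\sum_{z'\in\mathfrak{Z}_n}(\omega\otimes\mathrm{id})\big((z\otimes1)u_{z'}\big),
\]
a finite sum (only the $z'$ with $zz'\neq0$ contribute, as noted in the proof of Lemma~\ref{L5.14}).

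The second, and main, step is a norm estimate that licenses the interchange of the two summations over $\mathfrak{Z}_{n+1}$ and $\mathfrak{Z}_n$. Because $u_{z'}$ is a unitary in $z'W^*(G_n)\otimes_{\min}C(G_n)$, because $(z'\otimes1)u_{z'}=u_{z'}$, and because $z$ is central in $W^*(G_{n+1})$ (so $zz'$ commutes with $z'W^*(G_n)$), the element $(z\otimes1)u_{z'}=(zz'\otimes1)u_{z'}$ has norm $\le1$ and lies in the finite-dimensional corner $(zz')W^*(G_{n+1})(zz')\otimes C(G_n)$. Applying the slice map by the positive functional $\omega\!\upharpoonright_{(zz')W^*(G_{n+1})(zz')}$, of norm $\omega(zz')$, gives
\[
\big\Vert(\omega\otimes\mathrm{id})\big((z\otimes1)u_{z'}\big)\big\Vert\le\omega(zz').
\]
Summing over all pairs, $\sum_{z\in\mathfrak{Z}_{n+1}}\sum_{z'\in\mathfrak{Z}_n}\omega(zz')=\sum_{z\in\mathfrak{Z}_{n+1}}\omega(z)=1$, using $\sum_{z'\in\mathfrak{Z}_n}z'=1$ and the normality of $\omega$ on $W^*(G_n)$ and $W^*(G_{n+1})$. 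Thus the double family is absolutely summable in $C(G_n)$, and Fubini allows the reordering
\[
r_n^{n+1}(\omega(u_{n+1}))=\sum_{z'\in\mathfrak{Z}_n}\bigg(\sum_{z\in\mathfrak{Z}_{n+1}}(\omega\otimes\mathrm{id})\big((z\otimes1)u_{z'}\big)\bigg).
\]

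Finally I would evaluate the inner sum. Writing $u_{z'}=\sum_{i=1}^{N}a_i\otimes b_i$ as a finite sum with $a_i\in z'W^*(G_n)$ and $b_i\in\mathbb{C}[G_n]$, one has $\sum_{z\in\mathfrak{Z}_{n+1}}(\omega\otimes\mathrm{id})((z\otimes1)u_{z'})=\sum_i\big(\sum_z\omega(za_i)\big)b_i$; since the finite partial sums of $\sum_{z\in\mathfrak{Z}_{n+1}}z$ increase $\sigma$-weakly to $1$ and $\omega\!\upharpoonright_{W^*(G_{n+1})}$ is normal, $\sum_z\omega(za_i)=\omega(a_i)$, so the inner sum is $\sum_i\omega(a_i)b_i=(\omega\otimes\mathrm{id})(u_{z'})=\omega(u_{z'})$. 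Hence $r_n^{n+1}(\omega(u_{n+1}))=\sum_{z'\in\mathfrak{Z}_n}\omega(u_{z'})=\omega(u_n)$, so $(\omega(u_n))_n$ is a coherent sequence and defines an element $\omega(u)\in C(G)=\varprojlim C(G_n)$. The one delicate point is precisely the exchange of the two infinite sums: this is genuinely needed because the Bratteli diagram is \emph{not} locally finite, so a fixed $z'\in\mathfrak{Z}_n$ sits below infinitely many $z\in\mathfrak{Z}_{n+1}$, and it is exactly the refined bound $\omega(zz')$ (rather than $\omega(z)$) together with $\sum_{z,z'}\omega(zz')=1$ that makes the double series absolutely convergent.
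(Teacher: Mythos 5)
Your proposal is correct and follows essentially the same route as the paper: apply Lemma~\ref{L5.14} to each $r_n^{n+1}(\omega(u_z))$, use the bound $\Vert(\omega\otimes\mathrm{id})((z\otimes1)u_{z'})\Vert\leq\omega(zz')$ with $\sum_{z,z'}\omega(zz')=1$ to interchange the two sums, and then evaluate the inner sum as $\omega(u_{z'})$ via the finite dimensionality of $z'W^*(G_n)$ and the local normality of $\omega$. You merely spell out in more detail the slice-map estimate and the normality argument that the paper compresses into ``as before'' and one sentence.
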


This should be understood as a generalization of \cite[Proposition 4.6]{Gorin:AdvMath12}. Namely, the discussion here enables us to give a representation-theoretic interpretation to Gorin's $q$-Schur generating functions; in fact, such a function is nothing less than the restriction of a quantized character function to the ``diagonal subgroup'' of $\mathrm{U}_q(\infty)$; see the next section.

\begin{proof}
We have
\[
r_n^{n+1}(\omega(u_{n+1})) = \sum_{z\in\mathfrak{Z}_{n+1}} r_n^{n+1}(\omega(u_z)) = \sum_{z\in\mathfrak{Z}_{n+1}}\sum_{z'\in\mathfrak{Z}_n} (\omega\otimes\mathrm{id})((z\otimes1)u_{z'})
\]
by Lemma \ref{L5.14}.
Since
\[
\Vert (\omega\otimes\mathrm{id}((z\otimes1)u_{z'})\Vert \leq \omega(zz')
\]
as before, we have
\[
r_n^{n+1}(\omega(u_{n+1})) = \sum_{z'\in\mathfrak{Z}_n}\sum_{z\in\mathfrak{Z}_{n+1}} (\omega\otimes\mathrm{id})((z\otimes1)u_{z'}).
\]
Since $u_{z'} \in z'W^*(G_n)\otimes\mathbb{C}[G_n]$ and $z'W^*(G_n)$ is finite dimensional, we have
\[
\sum_{z\in\mathfrak{Z}_{n+1}} (\omega\otimes\mathrm{id})((z\otimes1)u_{z'})
=
(\omega\otimes\mathrm{id})(u_{z'}) = \omega(u_{z'})
\]
thanks to the local normality of $\omega$. Hence we are done.
\end{proof}

We call this $\omega(u) \in C(G)$ the \emph{quantized function} on $G$ associated with $\omega$. Namely, each $\omega \in S^\mathrm{ln}(\mathcal{B}(G))$ defines a quantized function $\omega(u) \in C(G)$, and its projection $\omega(u_n) \in C(G_n)$ should be understood as the restriction of $\omega(u)$ to $G_n$.

\begin{Definition}
The quantized function $\chi(u) \in C(G)$ associated with any $\chi \in K_{-1}^\mathrm{ln}(\vartheta^t)$ is called a \emph{quantized character function} of $G$.
\end{Definition}

This formulation of quantized characters is different from \cite{Sato:JFA19} and \cite{Sato:preprint19}. The next proposition enables one to compute any quantized character functions explicitly by utilizing the represen\-ta\-tion theory of $G_n$.

\begin{Proposition}\label{P5.17} Every quantized character function $\chi(u) = (\chi(u_n)) \in C(G)$ with $\chi \in K_{-1}^\mathrm{ln}(\vartheta^t)$ admits the following expression:
\[
\chi(u_n) = \sum_{z\in\mathfrak{Z}_n} \chi(z)
\frac{1}{\mathrm{Tr}(\pi_{U_z}(\rho_n))}
(\mathrm{Tr}\otimes\mathrm{id})\big((\pi_{U_z}(\rho_n)\otimes1)U_z\big),
\]
where $\rho_n$ denotes the special positive element of $\mathcal{U}(G_n)$ $($see \eqref{Eq5.7}$)$ and $\mathrm{Tr}$ stands for the non-normalized trace on $B(\mathcal{H}_{U_z})$.
\end{Proposition}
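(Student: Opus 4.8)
The plan is to reduce the claim to the single-level data and then sum over $\mathfrak{Z}_n$. First I would invoke the fact, established just before the statement, that $\chi(u_n)=\sum_{z\in\mathfrak{Z}_n}\chi(u_z)$ converges in $C(G_n)$, so that it suffices to identify each summand $\chi(u_z)=(\chi\otimes\mathrm{id})(u_z)$ with $\chi(z)\,\mathrm{Tr}(\pi_{U_z}(\rho_n))^{-1}(\mathrm{Tr}\otimes\mathrm{id})\big((\pi_{U_z}(\rho_n)\otimes1)U_z\big)$.

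Next I would pin down the restriction of $\chi$ to the block $zW^*(G_n)\cong B(\mathcal{H}_{U_z})$. Since $\chi\in K_{-1}^\mathrm{ln}(\vartheta^t)$ is locally normal, its restriction to $W^*(G_n)$ is a normal $(\vartheta_n^t,-1)$-KMS state; because the flow fixes the center, the further restriction to the direct summand $zW^*(G_n)$ is a normal $(\vartheta_z^t,-1)$-KMS state, and by the uniqueness assumed in Section \ref{S2} (cf.\ the discussion around \eqref{Eq3.1} and \cite[Section~7]{Ueda:Preprint20}) it equals $\chi(z)\,\tau_z$, with $\tau_z$ the unique such state on $zW^*(G_n)$. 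Then I would make $\tau_z$ explicit: under $\pi_{U_z}\colon zW^*(G_n)\stackrel{\cong}{\longrightarrow}B(\mathcal{H}_{U_z})$ the restricted flow $\vartheta_z^t$ is given by $\mathrm{Ad}\,\pi_{U_z}(\rho_n)^{{\rm i}t}$, and the unique normal $(\mathrm{Ad}\,h^{{\rm i}t},-1)$-KMS state on a matrix algebra, for a positive invertible $h$, is $a\mapsto\mathrm{Tr}(ha)/\mathrm{Tr}(h)$ (a one-line verification of the KMS condition). Hence $\tau_z(a)=\mathrm{Tr}(\pi_{U_z}(\rho_n)\,\pi_{U_z}(a))/\mathrm{Tr}(\pi_{U_z}(\rho_n))$ for $a\in zW^*(G_n)$.

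To conclude, I would feed this into $u_z$. Writing $u_z=\sum_i a_i\otimes b_i\in zW^*(G_n)\otimes\mathbb{C}[G_n]$, so that $U_z=(\pi_{U_z}\otimes\mathrm{id})(u_z)=\sum_i\pi_{U_z}(a_i)\otimes b_i$, one obtains
\[
(\chi\otimes\mathrm{id})(u_z)=\sum_i\chi(a_i)\,b_i=\chi(z)\sum_i\tau_z(a_i)\,b_i=\chi(z)\,(\tau_z\otimes\mathrm{id})(U_z),
\]
where $\tau_z$ in the last expression denotes the corresponding state on $B(\mathcal{H}_{U_z})$; substituting the explicit form of $\tau_z$ gives exactly $\chi(z)\,\mathrm{Tr}(\pi_{U_z}(\rho_n))^{-1}(\mathrm{Tr}\otimes\mathrm{id})\big((\pi_{U_z}(\rho_n)\otimes1)U_z\big)$, and summing over $z\in\mathfrak{Z}_n$ yields the stated formula. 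No step presents a genuine difficulty; the only thing requiring care is the simultaneous bookkeeping of the two identifications in play — the decomposition $\chi\!\upharpoonright_{zW^*(G_n)}=\chi(z)\tau_z$ and the isomorphism $zW^*(G_n)\cong B(\mathcal{H}_{U_z})$ via $\pi_{U_z}$ — together with the elementary computation that the matrix-algebra flow generated by $\pi_{U_z}(\rho_n)$ at inverse temperature $-1$ has $\mathrm{Tr}(\pi_{U_z}(\rho_n)\,\cdot\,)$, suitably normalized, as its KMS state.
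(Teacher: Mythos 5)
Your argument is correct and follows essentially the same route as the paper's proof: identify $\chi=\chi(z)\tau_z$ on each block $zW^*(G_n)$ by uniqueness of the normal $(\vartheta_z^t,-1)$-KMS state, transport the flow to $\mathrm{Ad}\,\pi_{U_z}(\rho_n)^{{\rm i}t}$ on $B(\mathcal{H}_{U_z})$, use that $\Phi_{U_\bullet}\otimes\mathrm{id}$ (equivalently $\pi_{U_z}\otimes\mathrm{id}$) carries $u_z$ to $U_z$, and substitute the explicit weighted-trace form of $\tau_z$. You merely make explicit the matrix-algebra KMS computation and the bookkeeping that the paper summarizes as ``these three facts immediately imply the desired assertion.''
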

\begin{proof}
We have $\chi = \chi(z)\tau_z$ on $zW^*(G_n)$ with a unique normal $(\vartheta_z^t,-1)$-KMS state $\tau_z$ on $zW^*(G_n) \cong B(\mathcal{H}_{U_z})$. Since $\vartheta_z^t = \mathrm{Ad}(z\rho_n^{{\rm i}t})$ (see Section \ref{S5.1}), we see that
\[
\pi_{U_z}(\vartheta_z^t(x)) = \pi_{U_z}(\rho_n)^{{\rm i}t}\pi_{U_z}(x)\pi_{U_z}(\rho_n)^{-{\rm i}t}
\]
holds for every $x \in zW^*(G_n)$. Then, $\Phi_{U_\bullet}\otimes\mathrm{id}$ sends $u_z \in zW^*(G_n)\otimes\mathbb{C}[G_n]$ to $U_z \in B(\mathcal{H}_{U_z})\otimes\mathbb{C}[G_n] \subset (\bigoplus_{z\in\mathfrak{Z}_n} B(\mathcal{H}_{U_z}))\otimes\mathbb{C}[G_n]$. These three facts immediately imply the desired assertion. Actually, we have
\begin{equation}\label{Eq5.16}
\tau_z(u_z) := (\tau_z\otimes\mathrm{id})(u_z) =
\frac{1}{\mathrm{Tr}(\pi_{U_z}(\rho_n))}
(\mathrm{Tr}\otimes\mathrm{id})\big((\pi_{U_z}(\rho_n)\otimes1)U_z\big).
\end{equation}
Hence we are done.
\end{proof}

The above proposition says that the right-hand side of \eqref{Eq5.16} depends only on the unitary equivalence class of unitary representation $U_z$, and moreover, that any quantized character function $\chi(u)$ can explicitly be computed in principle by the data $\chi(z)$, $z \in \mathfrak{Z} := \bigsqcup_{n\geq0}\mathfrak{Z}_n$, and the representation theory of each $G_n$ that is enough to compute \eqref{Eq5.16} explicitly.

\begin{Proposition}
The mapping $\chi \in K_{-1}^\mathrm{ln}(\vartheta^t) \mapsto \chi(u) \in C(G)$ is injective, continuous and open, where $K_{-1}^\mathrm{ln}(\vartheta^t)$ is equipped with the weak$^*$ topology.
\end{Proposition}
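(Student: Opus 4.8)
The plan is to treat injectivity, continuity, and openness in turn, all three resting on one separation statement for the family of $\rho$-weighted irreducible characters produced in Proposition~\ref{P5.17}. Throughout I would use the formula $\chi(u_n) = \sum_{z\in\mathfrak{Z}_n}\chi(z)\,\tau_z(u_z)$ from Proposition~\ref{P5.17}, with $\tau_z(u_z) \in \mathbb{C}[G_n]$ given by \eqref{Eq5.16}, together with the fact (established just before Proposition~\ref{P5.15}) that this series converges in $C(G_n)$ because $\Vert\tau_z(u_z)\Vert \le 1$ and $\sum_{z\in\mathfrak{Z}_n}\chi(z)=1$. Recall also that $C(G) = \varprojlim C(G_n)$ carries the projective-limit topology, so convergence in $C(G)$ means convergence of every $n$-th component in $C(G_n)$.

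The key step is to observe that, for each $n$, the elements $\{\tau_z(u_z)\}_{z\in\mathfrak{Z}_n}$ lie in pairwise orthogonal subspaces of $\mathbb{C}[G_n]$ for the Haar inner product $\langle a,b\rangle_{h_n} := h_n(a^*b)$ ($h_n$ the Haar state of $G_n$), and that each $\tau_z(u_z)$ is nonzero with $\Vert\tau_z(u_z)\Vert_{h_n} > 0$. Indeed, in an orthonormal basis diagonalizing the positive invertible operator $\pi_{U_z}(\rho_n)$, \eqref{Eq5.16} exhibits $\tau_z(u_z)$ as a nonzero positive-coefficient combination of diagonal matrix coefficients of $U_z$; the orthogonality relations for compact quantum groups (see \cite[Section~1.4]{NeshveyevTuset:Book13}) give orthogonality of matrix coefficients of inequivalent irreducibles and non-degeneracy of $\langle\,\cdot\,,\cdot\,\rangle_{h_n}$ on each isotypical block. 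Hence the norm-continuous functionals $\phi_z := h_n\big(\tau_z(u_z)^*\,\cdot\,\big)$ on $C(G_n)$ satisfy $\phi_z\big(\tau_{z'}(u_{z'})\big) = \delta_{z,z'}\Vert\tau_z(u_z)\Vert_{h_n}^2$, and interchanging $\phi_z$ with the norm-convergent series yields $\phi_z\big(\chi(u_n)\big) = \chi(z)\Vert\tau_z(u_z)\Vert_{h_n}^2$ for every $\chi \in K_{-1}^\mathrm{ln}(\vartheta^t)$ and $z \in \mathfrak{Z}_n$. This gives injectivity at once: if $\chi_1(u) = \chi_2(u)$, then $\chi_1(u_n) = \chi_2(u_n)$ for all $n$, so $\chi_1(z) = \chi_2(z)$ for all $z \in \mathfrak{Z}$; since a locally normal $(\vartheta^t,-1)$-KMS state equals $\chi(z)\tau_z$ on each $zW^*(G_n)$ by uniqueness of KMS states, $\chi_1$ and $\chi_2$ agree on $\mathcal{B}_\infty(G)$ and hence on $\mathcal{B}(G)$.

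For continuity, let $\chi_\lambda \to \chi$ weak$^*$ in $K_{-1}^\mathrm{ln}(\vartheta^t)$. Since each minimal central projection $z$ of each $W^*(G_n)$ lies in $\mathcal{B}(G)$, we get $\chi_\lambda(z) \to \chi(z)$ for all $z$, while $\sum_{z\in\mathfrak{Z}_n}\chi_\lambda(z) = 1 = \sum_{z\in\mathfrak{Z}_n}\chi(z)$. A Scheffé-type estimate --- fix $\varepsilon>0$, choose a finite $F\subset\mathfrak{Z}_n$ with $\sum_{z\in F}\chi(z) > 1-\varepsilon$, use convergence on the finite set $F$ to force $\sum_{z\notin F}\chi_\lambda(z)$ small for large $\lambda$ --- shows $\sum_{z\in\mathfrak{Z}_n}|\chi_\lambda(z)-\chi(z)| \to 0$. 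Combined with $\Vert\tau_z(u_z)\Vert\le1$ this gives $\Vert\chi_\lambda(u_n) - \chi(u_n)\Vert \le \sum_{z\in\mathfrak{Z}_n}|\chi_\lambda(z)-\chi(z)| \to 0$ in $C(G_n)$ for every $n$, i.e.\ $\chi_\lambda(u) \to \chi(u)$ in $C(G)$.

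Finally, openness means that $\chi(u) \mapsto \chi$ is continuous on the image with its subspace topology from $C(G)$, and this reverses the previous two observations. If $\chi_\lambda(u) \to \chi(u)$ in $C(G)$, then $\chi_\lambda(u_n)\to\chi(u_n)$ in $C(G_n)$ for each $n$; applying the norm-continuous $\phi_z$ and dividing by $\Vert\tau_z(u_z)\Vert_{h_n}^2 > 0$ gives $\chi_\lambda(z) \to \chi(z)$ for all $z \in \mathfrak{Z}$. The same Scheffé estimate then yields $\sum_{z\in\mathfrak{Z}_n}|\chi_\lambda(z)-\chi(z)| \to 0$, whence for $a \in W^*(G_n)$, $|\chi_\lambda(a) - \chi(a)| = \big|\sum_{z\in\mathfrak{Z}_n}(\chi_\lambda(z)-\chi(z))\tau_z(za)\big| \le \Vert a\Vert\sum_{z\in\mathfrak{Z}_n}|\chi_\lambda(z)-\chi(z)| \to 0$; since $\mathcal{B}_\infty(G)=\bigcup_n W^*(G_n)$ is norm-dense in $\mathcal{B}(G)$ and all $\chi_\lambda$ are states, $\chi_\lambda \to \chi$ weak$^*$. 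The main obstacle is the separation step of the second paragraph --- verifying that the $\rho$-weighted characters $\tau_z(u_z)$ are nonzero and are separated by the continuous functionals $\phi_z$ --- which is precisely where the compact quantum group orthogonality relations and the invertibility of $\rho_n$ are needed; the remainder is bookkeeping with the uniform tail estimate.
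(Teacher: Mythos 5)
Your proposal is correct and follows essentially the same route as the paper: the same Haar-state orthogonality claim for the $\rho$-weighted characters $(\tau_z\otimes\mathrm{id})(u_z)$ (nonzero, pairwise orthogonal) drives injectivity and openness, and the same finite-subset tail estimate on $\sum_{z\in\mathfrak{Z}_n}|\chi_\lambda(z)-\chi(z)|$ gives continuity. The only cosmetic difference is that where the paper invokes the homeomorphism of \cite[Proposition 7.10]{Ueda:Preprint20} (equivalently Theorem \ref{T3.11}) to pass between pointwise convergence of $\chi(z)$ and weak$^*$ convergence of $\chi$, you re-derive that equivalence directly via $\chi(a)=\sum_z\chi(z)\tau_z(za)$ and density of $\bigcup_n W^*(G_n)$ in $\mathcal{B}(G)$.
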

\begin{proof}
We first claim that $h((\tau_{z_1}\otimes\mathrm{id})(u_{z_1})^* (\tau_{z_2}\otimes\mathrm{id})(u_{z_2})) = 0$
if $z_1\not=z_2$; otherwise non-zero, where~$h$ is a unique Haar state thanks to \cite[Theorem~1.4.3]{NeshveyevTuset:Book13} together with~\eqref{Eq5.16}.

Assume that $\chi_1(u) = \chi_2(u)$ in $C(G)$ with $\chi_1,\chi_2 \in K_{-1}^\mathrm{lin}(\vartheta^t)$. By definition $\chi_1(u_n) = \chi_2(u_n)$ in $C(G_n)$ for every $n \geq 0$. It follows, from the above claim, that $\chi_1(z) = \chi_2(z)$ holds for every $z \in \mathfrak{Z} = \bigsqcup_{n\geq0}\mathfrak{Z}_n$. Thanks to \cite[Proposition 7.10]{Ueda:Preprint20} (or Theorem \ref{T3.11} of this paper) we conclude that $\chi_1 = \chi_2$.

{\samepage Assume that $\chi_i \to \chi$ in $K_{-1}^\mathrm{ln}(\vartheta^t)$ with respect to the weak$^*$-topology ({\it n.b.}, the weak$^*$-topology is metrizable; see \cite[Corollary 8.3]{Ueda:Preprint20}). Let $n\geq0$ be arbitrarily fixed and choose an increasing sequence $\mathfrak{F}_k$ of finite subsets of $\mathfrak{Z}_n$. We have
\begin{align*}
\Vert \chi_i(u_n) - \chi(u_n)\Vert_{C(G_n)}
&\leqq
\sum_{z \in \mathfrak{Z}_n} |\chi_i(z)-\chi(z)| \\
&\leqq 2\sum_{z\in\mathfrak{F}_k} |\chi_i(z)-\chi(z)| + 2\bigg(1 - \sum_{z\in\mathfrak{F}_k} \chi(z)\bigg),
\end{align*}}
and hence
\[
\varlimsup_{i\to\infty} \Vert \chi_i(u_n) - \chi(u_n)\Vert_{C(G_n)} \leq 2\bigg(1 - \sum_{z\in\mathfrak{F}_k} \chi(z)\bigg) \to 0 \quad \text{as $k \to \infty$}.
\]
Therefore, we conclude that $\Vert \chi_i(u_n) - \chi(u_n)\Vert_{C(G_n)} \to 0$ as $i\to\infty$ for every $n\geq0$. This means that $\chi_i(u) \to \chi(u)$ in $C(G)$ as $i\to\infty$. Hence $\chi \mapsto \chi(u)$ is continuous.

Assume that $\chi_i(u) \to \chi(u)$ as $i\to\infty$, that is, $\Vert\chi_i(u_n)-\chi(u_n)\Vert \to 0$ as $i\to\infty$ for all $n \geq 0$. Set $v_z := (\tau_z\otimes\mathrm{id})(u_z) \in \mathbb{C}[G_n]$ and $h(v_z^* v_z) > 0$ as before. Then, we have
\[
|\chi_i(z)-\chi(z)|\,h(v_z^* v_z) =
\big|(\chi_i(u_n)-\chi(u_n)\,|\,v_z)_h\big| \leq \Vert \chi_i(u_n)-\chi(u_n)\Vert\,h(v_z^* v_z)^{1/2},
\]
where $(a\,|\,b)_h := h(b^* a)$ for $a,b \in \mathbb{C}[G_n]$. Hence $\chi_i(z) \to \chi(z)$ as $i\to\infty$ for all $z \in \mathfrak{Z} = \bigcup_{n\geq0}\mathfrak{Z}_n$. Consequently, $\chi_i \to \chi$ in $K_{-1}^\mathrm{ln}(\vartheta^t)$ thanks to \cite[Proposition 7.10]{Ueda:Preprint20}. Thus, we have shown that $\chi \mapsto \chi(u)$ is open.
\end{proof}

We can discuss the multiplication as well as the adjoint operations for quantized character functions in $C(G)$, which will be translated in terms of $K_{-1}^\mathrm{ln}(\vartheta^t)$ in the next proposition that contains \cite[Theorem 4.1]{Sato:preprint19} essentially.

\begin{Proposition}
\quad
\begin{enumerate}\itemsep=0pt
\item[$1.$] For any pair $\omega_1,\omega_2 \in S^\mathrm{ln}(\mathcal{B}(G))$, there is a unique locally normal state $\omega_1\times\omega_2 \colon \mathcal{B}(G\times G) \allowbreak\to \mathbb{C}$ that agrees with $\omega_1\otimes\omega_2$ on $\mathcal{B}_\infty(G)\otimes\mathcal{B}_\infty(G)$.

\item[$2.$] For any pair $\chi_1,\chi_2 \in K_{-1}^\mathrm{ln}(\vartheta^t)$, a linear functional $\chi_1\cdot\chi_2 := (\chi_1\times\chi_2)\circ\hat{\Delta} \colon \mathcal{B}(G) \to \mathbb{C}$ falls into $K_{-1}^\mathrm{ln}(\vartheta^t)$, and moreover, $(\chi_1\cdot\chi_2)(u) = \chi_1(u)\,\chi_2(u)$ in $C(G)$ holds. In parti\-cu\-lar, $K_{-1}^\mathrm{ln}(\vartheta^t)$ becomes a semigroup with multiplication $(\chi_1,\chi_2) \mapsto \chi_1\cdot\chi_2$ and neutral element~$\hat{\varepsilon}$, and the mapping $\chi \mapsto \chi(u)$ is an injective, semigroup homomorphism from $K_{-1}^\mathrm{ln}(\vartheta^t)$ into~$C(G)$.

\item[$3.$] For any $\chi \in K_{-1}^\mathrm{ln}(\vartheta^t)$, the linear functional $\chi^* := \chi\circ\hat{R}$ falls into $K_{+1}^\mathrm{ln}(\vartheta^t)$ and $\chi^*(u) = \chi(u)^*$ holds. In particular, the family of quantized character functions is closed under the adjoint operation if and only if $\vartheta^t$ is a trivial flow.
\end{enumerate}
 \end{Proposition}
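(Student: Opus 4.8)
The plan is to handle the three assertions in turn, each time reducing to a finite level $W^*(G_n)$ or $W^*(G_n\times G_n)$ and then passing through the inductive/projective limits. For item~1 I would build $\omega_1\times\omega_2$ level by level: local normality makes $\omega_i\!\upharpoonright_{W^*(G_n)}$ a normal state, so the $W^*$-tensor product $(\omega_1\!\upharpoonright_{W^*(G_n)})\,\bar\otimes\,(\omega_2\!\upharpoonright_{W^*(G_n)})$ is a normal state on $W^*(G_n)\,\bar\otimes\,W^*(G_n)=W^*(G_n\times G_n)$; compatibility of the embeddings makes these cohere in $n$, giving a locally normal state on $\mathcal B(G\times G)=\varinjlim W^*(G_n\times G_n)$ agreeing with $\omega_1\otimes\omega_2$ on each $W^*(G_n)\otimes W^*(G_n)$, hence on $\mathcal B_\infty(G)\otimes\mathcal B_\infty(G)$; uniqueness is automatic, since a locally normal state is normal on each $W^*(G_n\times G_n)$, hence determined by its values on the $\sigma$-weakly dense $W^*(G_n)\otimes W^*(G_n)$, and $\bigcup_n W^*(G_n\times G_n)$ is norm-dense. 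For item~2, $\chi_1\cdot\chi_2=(\chi_1\times\chi_2)\circ\hat\Delta$ is a locally normal state because $\hat\Delta$ is a locally normal unital $*$-homomorphism and $\chi_1\times\chi_2$ is a locally normal state by item~1; the $(\vartheta^t,-1)$-KMS property I would verify at each level, using that $\hat\Delta_n(\rho_n)=\rho_n\otimes\rho_n$ from~\eqref{Eq5.9} gives $\hat\Delta_n\circ\vartheta_n^t=(\vartheta_n^t\,\bar\otimes\,\vartheta_n^t)\circ\hat\Delta_n$, while $\chi_1\times\chi_2$ restricted to $W^*(G_n)\,\bar\otimes\,W^*(G_n)$ is a tensor product of $(\vartheta_n^t,-1)$-KMS states, so composing with the intertwining $*$-homomorphism $\hat\Delta_n$ transports the KMS condition back to $W^*(G_n)$. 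For $(\chi_1\cdot\chi_2)(u)=\chi_1(u)\chi_2(u)$ I would invoke that $u_n$ is a corepresentation of $\hat\Delta_n$, i.e.\ $(\hat\Delta_n\otimes\mathrm{id})(u_n)=(u_n)_{13}(u_n)_{23}$ (the statement dual to comultiplicativity of $\Delta_n$ on $\mathbb C[G_n]$, cf.~\eqref{Eq5.7}), and apply $(\chi_1\times\chi_2)\otimes\mathrm{id}$, which equals $\chi_1\otimes\chi_2\otimes\mathrm{id}$ on the range of $\hat\Delta_n\otimes\mathrm{id}$ by item~1, collapsing the right-hand side to $\chi_1(u_n)\chi_2(u_n)$; passing to $C(G)=\varprojlim C(G_n)$ finishes it. Associativity is coassociativity of $\hat\Delta$; $\hat\varepsilon$ is a neutral element by the counit axioms, and $\hat\varepsilon\in K_{-1}^\mathrm{ln}(\vartheta^t)$ since it is a normal $*$-character supported on the trivial summand $\mathbbm 1$ on which $\vartheta^t$ acts trivially (a flow-invariant character is $(\vartheta^t,\beta)$-KMS for every $\beta$); injectivity of $\chi\mapsto\chi(u)$ is the preceding proposition, so $\chi\mapsto\chi(u)$ is an injective semigroup homomorphism.

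For item~3, $\chi^*=\chi\circ\hat R$ is a locally normal state because $\hat R$ is a locally normal unital positive $*$-anti-automorphism. For the KMS condition at $\beta=+1$: for $\vartheta^t$-analytic $a$ and arbitrary $b$ one has $\chi^*(ab)=\chi(\hat R(b)\hat R(a))$, and applying the $(\vartheta^t,-1)$-KMS identity for $\chi$ in its equivalent form $\chi(c\,a')=\chi(\vartheta^{\mathrm i}(a')\,c)$ with $c=\hat R(b)$ and $a'=\hat R(a)$ (analytic by~\eqref{Eq5.11}), together with $\hat R\circ\vartheta^{\mathrm i}=\vartheta^{\mathrm i}\circ\hat R$, yields $\chi^*(ab)=\chi(\hat R(\vartheta^{\mathrm i}(a))\hat R(b))=\chi(\hat R(b\,\vartheta^{\mathrm i}(a)))=\chi^*(b\,\vartheta^{\mathrm i}(a))$, so $\chi^*\in K_{+1}^\mathrm{ln}(\vartheta^t)$. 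For $\chi^*(u)=\chi(u)^*$ I would combine three facts at level $n$: (i) the antipode axiom for the corepresentation $u_n$ gives $(\hat S_n\otimes\mathrm{id})(u_n)=u_n^{-1}$; (ii) $\hat R_n=\vartheta_n^{\mathrm i/2}\circ\hat S_n$ with $\hat S_n(\rho_n)=\rho_n^{-1}$ (from~\eqref{Eq5.9},~\eqref{Eq5.12}), whence $(\hat R_n\otimes\mathrm{id})(u_n)=(\vartheta_n^{\mathrm i/2}\otimes\mathrm{id})(u_n^{-1})$; (iii) $\chi(u_n)^*=(\chi\otimes\mathrm{id})(u_n^*)=(\chi\otimes\mathrm{id})(u_n^{-1})$ since $\chi$ is a state and $u_n$ is unitary. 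Because a KMS state is invariant under its own flow even at imaginary time on analytic elements, and the entries of $u_n^{-1}$ lie in the finite-dimensional central blocks $zW^*(G_n)$, hence are $\vartheta_n^t$-analytic, one gets $\chi^*(u_n)=(\chi\circ\vartheta_n^{\mathrm i/2}\otimes\mathrm{id})(u_n^{-1})=(\chi\otimes\mathrm{id})(u_n^{-1})=\chi(u_n)^*$, which passes to $C(G)$.

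For the equivalence: if $\vartheta^t$ is trivial, then $K_{+1}^\mathrm{ln}(\vartheta^t)=K_{-1}^\mathrm{ln}(\vartheta^t)$ (both are the locally normal traces), so $\chi^*\in K_{-1}^\mathrm{ln}(\vartheta^t)$ and $\chi(u)^*=\chi^*(u)$ is again a quantized character function. Conversely, suppose the family is closed under the adjoint. Fixing $\chi\in K_{-1}^\mathrm{ln}(\vartheta^t)$, closedness and injectivity of $\chi\mapsto\chi(u)$ give a unique $\psi\in K_{-1}^\mathrm{ln}(\vartheta^t)$ with $\psi(u)=\chi^*(u)$. By Proposition~\ref{P5.17} and the computation above, $\chi^*(u_n)=\sum_{z\in\mathfrak Z_n}\chi(\bar z)\,(\tau_z^{(+1)}\otimes\mathrm{id})(u_z)$ and $\psi(u_n)=\sum_{z\in\mathfrak Z_n}\psi(z)\,(\tau_z\otimes\mathrm{id})(u_z)$, with $\tau_z^{(+1)}$ the $(\vartheta_z^t,+1)$-KMS state; comparing $U_z$-isotypic components (matrix coefficients of inequivalent irreducibles are orthogonal for the Haar state) forces $\psi(z)\,(\tau_z\otimes\mathrm{id})(u_z)=\chi(\bar z)\,(\tau_z^{(+1)}\otimes\mathrm{id})(u_z)$ for all $z$ and $n$. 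When $\pi_{U_z}(\rho_n)$ is non-scalar these two elements of $\mathbb C[G_n]$ are linearly independent (each has counit $1$, so they cannot be proportional), forcing $\psi(z)=\chi(\bar z)=0$; since non-scalarity of $\pi_{U_z}(\rho_n)$ implies that of $\pi_{U_{\bar z}}(\rho_n)$, also $\chi(z)=0$. Hence closedness forces every $\chi\in K_{-1}^\mathrm{ln}(\vartheta^t)$ to be supported at each level on the blocks where $\vartheta^t$ acts trivially, i.e.\ to be a trace on $\mathcal B(G)$; since a non-trivial $\vartheta^t$ admits a non-tracial element of $K_{-1}^\mathrm{ln}(\vartheta^t)$ — equivalently, $H_1^+(\kappa_{(\vartheta^t,-1)})$ charges some vertex on which $\vartheta^t$ is non-trivial, which via Theorems~\ref{T3.11} and~\ref{T5.13} is the existence of such a harmonic function — this is a contradiction.

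The step I expect to be the main obstacle is the precise interface between the \emph{concrete} unitary $u_n$ (an element built from the irreducibles, living in $W^*(G_n)\otimes\mathbb C[G_n]$) and the \emph{abstract} Hopf-algebraic maps $\hat\Delta_n,\hat S_n$, which are defined only by duality against $\mathbb C[G_n]$ and take values in $W^*(G_n\times G_n)$ rather than in $W^*(G_n)\,\bar\otimes\,W^*(G_n)$: the corepresentation identities $(\hat\Delta_n\otimes\mathrm{id})(u_n)=(u_n)_{13}(u_n)_{23}$ and $(\hat S_n\otimes\mathrm{id})(u_n)=u_n^{-1}$ must be established one central block at a time, using that on the finitely many summands of $W^*(G_n\times G_n)$ met by $\hat\Delta_n(zW^*(G_n))$ one has $W^*(G_n\times G_n)=W^*(G_n)\,\bar\otimes\,W^*(G_n)$ (cf.\ the proof of Lemma~\ref{L5.2}). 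A secondary point of care is, in the converse of the equivalence, the fact that the simplex of $\kappa$-harmonic functions is rich enough to charge a vertex on which $\vartheta^t$ acts non-trivially.
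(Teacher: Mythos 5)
Your handling of items 1 and 2 and of the first half of item 3 is correct and follows the paper's strategy in outline: levelwise construction of $\chi_1\times\chi_2$ with uniqueness from local normality, pullback of the KMS condition along the flow-intertwining comultiplication (via $\hat\Delta_n(\rho_n)=\rho_n\otimes\rho_n$), and blockwise computations with the canonical unitaries $u_z$, all infinite sums being controlled by the weights $\chi(z)$. The detailed computations, however, take a genuinely different route. For multiplicativity the paper expands $(\hat\Delta_n\otimes\mathrm{id})(u_z)$ explicitly through the irreducible decomposition of $U_{z_1}\times U_{z_2}$ (the projections $P^{(z_1,z_2)}_z$) and then sums over $z$ with norm estimates, whereas you invoke the dual corepresentation identity $(\hat\Delta_n\otimes\mathrm{id})(u_n)=(u_n)_{13}(u_n)_{23}$ and apply $\chi_1\otimes\chi_2$ legwise; these are the same fact (cf.\ \eqref{Eq5.6}) packaged differently, and your version is sound provided --- as you yourself note --- the identity is read block by block, since $\hat\Delta_n(u_z)$ meets infinitely many blocks of $W^*(G_n\times G_n)$ and one must use the normality of $\chi_1\times\chi_2$ to justify the absolutely convergent sums. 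For $\chi^*(u)=\chi(u)^*$ the paper uses the concrete formula $U_z^*=\pi_{U_z}(\rho_n)^{-1/2}(j\otimes\mathrm{id})\big(\bar U_z\big)\pi_{U_z}(\rho_n)^{1/2}$ together with the uniqueness of the blockwise $(+1)$-KMS state, arriving at $\chi(u_z)^*=\chi^*(u_{\bar z})$; your route via $(\hat S_n\otimes\mathrm{id})(u_z)=u_{\bar z}^{\,*}$, $\hat R_n=\vartheta_n^{\mathrm{i}/2}\circ\hat S_n$ and the imaginary-time invariance $\chi\circ\vartheta^{\mathrm{i}/2}=\chi$ on analytic elements (legitimate, since each block is finite dimensional) produces exactly the same blockwise identity. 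Your explicit verification that $\chi^*$ is $(\vartheta^t,+1)$-KMS is the standard argument and is correct; the paper merely asserts it.

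The one substantive issue is the converse of the final equivalence. Your isotypic-component comparison correctly shows that closedness under adjoints forces every $\chi\in K_{-1}^{\mathrm{ln}}(\vartheta^t)$ to vanish on every block $z$ with $\pi_{U_z}(\rho_n)$ non-scalar. But to conclude that $\vartheta^t$ is trivial you then need the richness statement you flag at the end and never prove: that a non-trivial flow forces some element of $K_{-1}^{\mathrm{ln}}(\vartheta^t)$, equivalently of $H_1^+(\kappa_{(\vartheta^t,-1)})$, to charge a block on which $\rho_n$ is non-scalar. This is not automatic in the abstract setting: $\hat\varepsilon$ charges only the trivial blocks, soft compactness arguments on a non-locally-finite branching graph only produce superharmonic limits, and if no harmonic function charged a non-scalar block the family would be closed under adjoints even though $\vartheta^t$ is non-trivial. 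To be fair, the paper disposes of this assertion with ``the other assertion is trivial now'' and supplies no more detail than you do; but as your argument stands, the ``only if'' direction is conditional on that unproved charging property, so you should either prove it in the generality you work in or restrict that direction to cases (such as $\mathrm{U}_q(\infty)$) where the boundary is known to be rich enough.
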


\begin{proof}
(1) Since $\omega_1, \omega_2$ are locally normal, we can define a normal state $[\omega_1\times\omega_2]_n$ on $W^*(G_n\times G_n)\allowbreak =W^*(G_n)\,\bar{\otimes}\,W^*(G_n)$ to be $(\omega_1\!\upharpoonright_{W^*(G_n)})\,\bar{\otimes}\,(\omega_2\!\upharpoonright_{W^*(G_n)})$. By definition, it is clear that $[\omega_1\times\omega_2]_{n+1}$ agrees with $[\omega_1\times\omega_2]_n$ on $W^*(G_n\times G_n)$. Thus, the inductive limit $\omega_1\times\omega_2 := \varinjlim [\omega_1\times\omega_2]_n$ defines the desired locally normal state on $\mathcal{B}(G\times G)$, which clearly agrees with $\omega_1\otimes\omega_2$ on $\mathcal{B}_\infty(G)\otimes\mathcal{B}_\infty(G)$. The uniqueness of $\omega_1\times\omega_2$ follows from its local normality.

(2) Since $\hat{\Delta} = \varinjlim \hat{\Delta}_n$ is clearly locally normal and \eqref{Eq5.11}, one can easily confirm that $\chi_1\cdot\chi_2$ falls into $K_{-1}^\mathrm{ln}(\vartheta^t)$.

Let $n \geq 0$ and $z \in \mathfrak{Z}_n$ be arbitrarily chosen and fixed. By \eqref{Eq5.6} we have
\[
\big(\hat{\Delta}_n\otimes\mathrm{id}\big)(u_z)
=
\big(\Phi_{U_\bullet}^{(2)}{}^{-1}\otimes\mathrm{id}\big)
\bigg(\bigoplus_{(z_1,z_2)}\big((U_{z_1}\times U_{z_2})\big(P^{(z_1,z_2)}_z \otimes 1\big)\big)\bigg),
\]
where $(z_1,z_2)$ runs as long as $U_z$ appears in $U_{z_1}\times U_{z_2}$ as dirrect summands and $P_z^{(z_1,z_2)}$ is the orthogonal projection onto the whole invariant subspace corresponding to $U_z$. By the normality of $[\chi_1\times\chi_2]_n$ we observe that
\begin{align*}
(\chi_1\cdot\chi_2)(u_z)
&=
\big([\chi_1\times\chi_2]_n\circ\hat{\Delta}_n\otimes\mathrm{id}\big)(u_z) \\
&=
\sum_{(z_1,z_2)}
([\chi_1\times\chi_2]_n\otimes\mathrm{id})
\big((\Phi_{U_\bullet}^{(2)}{}^{-1}\otimes\mathrm{id})\big((U_{z_1}\times U_{z_2}) \big(P^{(z_1,z_2)}_z \otimes 1\big)\big)\big) \\
&=
\sum_{(z_1,z_2)} \chi_1(z_1)\,\chi_2(z_2)
\frac{
(\mathrm{Tr}\otimes\mathrm{id})((\pi_{U_{z_1}\times U_{z_2}}(\rho_n)\otimes1)\big((U_{z_1}\times U_{z_2})\big(P^{(z_1,z_2)}_z \otimes 1\big)\big)
}{\mathrm{Tr}(\pi_{U_{z_1}\times U_{z_2}}(\rho_n))}
\end{align*}
and the (operator) norm of each term of the sum in the last expression is not greater than
\[
\chi_1(z_1)\,\chi_2(z_1)\,\frac{\mathrm{Tr}\big(\pi_{U_{z_1}\times U_{z_2}}(\rho_n)P_z^{(z_1,z_2)}\big)}{\mathrm{Tr}(\pi_{U_{z_1}\times U_{z_2}}(\rho_n))}.
\]
Consequently, we obtain that
\begin{align*}
(\chi_1\cdot\chi_2)(u_n)\!
&=\!
\sum_{z\in\mathfrak{Z}_n} (\chi_1\cdot\chi_2)(u_z) \\
&=\!
\sum_{(z_1,z_2)} \!\sum_z \chi_1(z_1)\,\chi_2(z_2)
\frac{
(\mathrm{Tr}\otimes\mathrm{id})((\pi_{U_{z_1}\times U_{z_2}}(\rho_n)\!\otimes\!1)(U_{z_1}\!\times \! U_{z_2})(P^{(z_1,z_2)}_z\! \otimes\! 1))
}{\mathrm{Tr}(\pi_{U_{z_1}\times U_{z_2}}(\rho_n))}
\\
&=\!
\sum_{(z_1,z_2)} \chi_1(z_1)\,\chi_2(z_2)
\frac{
(\mathrm{Tr}\otimes\mathrm{id})((\pi_{U_{z_1}\times U_{z_2}}(\rho_n)\otimes1)(U_{z_1}\times U_{z_2}))
}{\mathrm{Tr}(\pi_{U_{z_1}\times U_{z_2}}(\rho_n))}
\end{align*}
in norm, and the last expression is easily shown to be $\chi_1(u_n)\,\chi_2(u_n)$ since
\[
U_{z_1}\times U_{z_2} = (U_{z_1})_{13}(U_{z_2})_{23}, \qquad
\pi_{U_{z_1}\times U_{z_2}}(\rho_n) = \pi_{U_{z_1}}(\rho_n)\otimes\pi_{U_{z_2}}(\rho_n)
\]
(see \cite[Definitions 1.3.2 and 1.7.1, Theorem 1.4.9]{NeshveyevTuset:Book13}).

It is easy to see that $\hat{\varepsilon}$ falls into $K_{-1}^\mathrm{ln}(\vartheta^t)$ and $\hat{\varepsilon}\cdot\chi = \chi = \chi\cdot\hat{\varepsilon}$ holds for every $\chi \in K_{-1}^\mathrm{ln}(\vartheta^t)$.

(3) By $\hat{R} = \varinjlim \hat{R}_n$ and \eqref{Eq5.9}, one easily sees that $\chi^*$ falls into $K_{+1}^\mathrm{ln}(\vartheta^t)$.

Thanks to \cite[Definitions 1.3.8 and 1.4.5]{NeshveyevTuset:Book13} we observe that
\begin{align*}
U_z^* = (j\otimes\mathrm{id})(U_z^c)
= \pi_{U_z}(\rho_n)^{-1/2}(j\otimes\mathrm{id})(\bar{U}_z)\pi_{U_z}(\rho_n)^{1/2},
\end{align*}
and hence
\begin{align*}
\chi(u_z)^*
&=
\chi(z)\,\tau_z(u_z) \\
&=
\chi(z)\,\frac{1}{\mathrm{Tr}(\pi_{U_z}(\rho_n))}
(\mathrm{Tr}\otimes\mathrm{id})\big((\pi_{U_z}(\rho_n)\otimes1)(j\otimes\mathrm{id})\big(\bar{U}_z\big)\big)
\\
&=
\chi(z)\,\frac{1}{\mathrm{Tr}\big(\pi_{\bar{U}_z}\big(\hat{R}(\rho_n)\big)\big)} (\mathrm{Tr}\otimes\mathrm{id})\big(\big(\pi_{\bar{U}_z}\big(\hat{R}(\rho_n)\big)\otimes1\big)\big(\bar{U}_z\big)\big)
\\
&=
\chi^*(\bar{z})\,\frac{1}{\mathrm{Tr}\big(\pi_{\bar{U}_z}\big(\rho_n^{-1}\big)\big)}
(\mathrm{Tr}\otimes\mathrm{id})\big(\big(\pi_{\bar{U}_z}\big(\rho_n^{-1}\big)\otimes1\big)\big(\bar{U}_z\big)\big)
\\
&=
\chi^*(\bar{z})\,\frac{1}{\mathrm{Tr}\big(\pi_{U_{\bar{z}}}\big(\rho_n^{-1}\big)\big)}
(\mathrm{Tr}\otimes\mathrm{id})\big(\big(\pi_{U_{\bar{z}}}\big(\rho_n^{-1}\big)\otimes1\big)(U_{\bar{z}})\big)
\end{align*}
by \cite[Example 2.2.23]{NeshveyevTuset:Book13} and by \eqref{Eq5.10} and \eqref{Eq5.14}. The uniqueness of normal $(\vartheta_{\bar{z}}^t,+1)$-KMS state on $\bar{z}W^*(G_n) \cong B(\mathcal{H}_{\bar{z}})$ implies that $\chi(u_z)^* = \chi^*(u_{\bar{z}})$. Thus we conclude that
\[
\chi(u_n)^* = \sum_{z\in\mathfrak{Z}_n} \chi(u_z)^* = \sum_{z\in\mathfrak{Z}_n} \chi^*(u_{\bar{z}}) = \chi^*(u_n).
\]
The other assertion is trivial now.
\end{proof}

The discussion in this section clearly works for single compact quantum groups. It is also (probably) possible to give a similar formulation of spherical functions of $G < G\times G$. This will be discussed elsewhere in a wider setup.

\subsection[A concrete example: U\_q(infty)]{A concrete example: $\boldsymbol{\mathrm{U}_q(\infty)}$} \label{S5.4}

We will examine a concrete example, which was already investigated by Sato \cite{Sato:JFA19,Sato:preprint19}, from our viewpoint. In what follows, we choose and fix $0 < q < 1$ throughout.

\subsubsection[Quantum unitary group U\_q(n)]
{Quantum unitary group $\boldsymbol{\mathrm{U}_q(n)}$} 

We begin by making it clear what the quantum unitary group $\mathrm{U}_q(n)$ we employ is.

Let $U_q\mathfrak{gl}(n)$ be the Drinfeld--Jimbo quantized universal enveloping algebra associated with the general linear Lie algebra $\mathfrak{gl}(n)$. Namely, it is the unital algebra generated by the Cartan-type elements $K_i^{\pm1}$, $1 \leq i \leq n$ and the elements $x_j$, $y_j$, $1 \leq j \leq n-1$, corresponding to the standard positive and negative simple root vectors of $\mathfrak{gl}(n)$ with the relations given in \cite[Section~1.2]{OblomkovStokman:ComposMath05}.
The algebra $U_q\mathfrak{gl}(n)$ becomes a Hopf $*$-algebra
with the following structure:
\begin{itemize}\itemsep=0pt
\item ($*$-structure) $\big(K_i^{\pm1}\big)^* = K_i^{\pm1}$, $x_j^* = q^{-1} y_j K_jK_{j+1}^{-1}$, $y_j^* = qK_j^{-1}K_{j+1} x_j$,
\item (comultiplication) $\hat{\Delta}_{n,q}\big(K_i^{\pm1}\big) = K_i^{\pm1}\otimes K_i^{\pm1}$, $\hat{\Delta}_{n,q}(x_j) = x_j\otimes1+K_j K_{j+1}^{-1}\otimes x_j$, $\hat{\Delta}_{n,q}(y_j) = y_j\otimes K_j^{-1} K_{j+1} + 1\otimes y_j$,
\item (antipode) $\hat{S}_{n,q}\big(K_i^{\pm1}\big) = K_i^{\mp1}$, $\hat{S}_{n,q}(x_j) = -K_j^{-1}K_{j+1} x_j$, $\hat{S}_{n,q}(y_j) = -y_j K_j K_{j+1}^{-1}$,
\item (counit) $\hat{\varepsilon}_{n,q}\big(K_i^{\pm1}\big) = 1$, $\hat{\varepsilon}_{n,q}(x_j) = \hat{\varepsilon}_{n,q}(y_j) = 0$.
\end{itemize}
See, e.g., \cite[Section 1.2]{OblomkovStokman:ComposMath05}. Applying \cite[Theorem 2.3.13]{NeshveyevTuset:Book13} to this Hopf $*$-algebra with its finite-dimensional type $1$ $*$-representations, we obtain the compact quantum group $\mathrm{U}_q(n)$ with Hopf $*$-algebra $(\mathbb{C}[\mathrm{U}_q(n)],\Delta_n,S_n,\varepsilon_n)$, which is exactly the coordinate ring $A(\mathrm{U}_q(n))$ of the so-called quantum unitary group of rank $n$ discussed, e.g., in~\cite[Section 1]{Noumi:AdvMath96}, \cite{NoumiYamadaMimachi:JpnJMath93} (see \cite[Section 2]{Dijkhuizen:ActaApplMath96} for its concise review) in the context of $q$-analysis. (Note that the choice of generators of $U_q\mathfrak{gl}(n)$ here follows \cite{Noumi:AdvMath96} (rather than \cite{NoumiYamadaMimachi:JpnJMath93}) with $K_i^{\pm1} := q^{\pm\epsilon_i}$, $x_j := e_j$, $y_j := f_j$.)

As in Section \ref{S5.1} we then obtain the Hopf $*$-algebra $\big(\mathcal{U}(\mathrm{U}_q(n)),\hat{\Delta}_n,\hat{S}_n,\hat{\varepsilon}_n\big)$ and the group $W^*$-algebra $W^*(\mathrm{U}_q(n)) \subset \mathcal{U}(\mathrm{U}_q(n))$. Note that there is a canonical embedding $U_q\mathfrak{gl}(n) \hookrightarrow (U_q\mathfrak{gl}(n))^{**} \hookrightarrow \mathbb{C}[\mathrm{U}_q(n)]^* = \mathcal{U}(\mathrm{U}_q(n))$ as Hopf $*$-algebras. Thus, we may and do regard $U_q\mathfrak{gl}(n)$ as a Hopf $*$-subalgebra of $\mathcal{U}(\mathrm{U}_q(n))$, and hence obtain
\begin{equation}\label{Eq5.17}
W^*(\mathrm{U}_q(n)) \subset \mathcal{U}(\mathrm{U}_q(n)) \supset U_q\mathfrak{gl}(n)
\end{equation}
as Hopf $*$-algebras, that is, $\hat{\Delta}_{n,q}$, $\hat{S}_{n,q}$, $\hat{\varepsilon}_{n,q}$ are just the restrictions of $\hat{\Delta}_n$, $\hat{S}_n$, $\hat{\varepsilon}_n$, respectively. Hence we will use only the symbols $\hat{\Delta}_n$, $\hat{S}_n$, $\hat{\varepsilon}_n$ via the above embedding.

An important thing is that the representation tensor categories (with conjugates) of $U_q\mathfrak{gl}(n)$ and $\mathrm{U}_q(n)$ are naturally identified with each other; see \cite[Definition 2.4.5]{NeshveyevTuset:Book13}. Hence the algebras $W^*(\mathrm{U}_q(n)) \subset \mathcal{U}(\mathrm{U}_q(n))$ can directly be constructed from $U_q\mathfrak{gl}(n)$ (or more precisely, its representation category).
Another important fact is that the special positive element $\rho_n \in \mathcal{U}(\mathrm{U}_q(n))$ (see \eqref{Eq5.7}) is given by
\begin{equation}\label{Eq5.18}
\rho_n = K_1^{-n+1} K_2^{-n+3} \cdots K_n^{n-1} \in U_q\mathfrak{gl}(n).
\end{equation}
See \cite[Remark 1.7.7 and Proposition 2.4.10]{NeshveyevTuset:Book13} and the proof of \cite[Lemma 5.1]{OnnStokman:IMRP06} (also see \cite[formula~(1.15)]{Noumi:AdvMath96}).
These facts show that all essential ingredients used in the present theory can directly be calculated in terms of $U_q\mathfrak{gl}(n)$, though our theory is developed based on group $W^*$-algebras $W^*(\mathrm{U}_q(n))$.

The minimal projections $\mathfrak{Z}_n$ of $W^*(\mathrm{U}_q(n))$ are known to be labeled by the signatures $\mathbb{S}_n$ of length $n$, and thus we will use the signatures $\mathbb{S}_n$ as an index set instead of $\mathfrak{Z}_n$ itself. Hence the natural mapping $\lambda \in \mathbb{S}_n \mapsto z_\lambda \in \mathfrak{Z}_n$ will be used in what follows.

\subsubsection[Infinite-dimensional quantum unitary group U\_q(infty)]
{Infinite-dimensional quantum unitary group $\boldsymbol{\mathrm{U}_q(\infty)}$} 

Let us consider the standard embedding $U_q\mathfrak{gl}(n) \hookrightarrow U_q\mathfrak{gl}(n+1)$ by sending the first $n$ Cartan-type elements $K_i^{\pm1}$'s and the first $n-1$ positive and negative simple root vectors $x_j$, $y_j$'s to the corresponding ones (with the same indices).
By the construction of $\mathbb{C}[\mathrm{U}_q(n)]$, the restriction map from $(U_q\mathfrak{gl}(n+1))^*$ to $(U_q\mathfrak{gl}(n))^*$ via $U_q\mathfrak{gl}(n) \hookrightarrow U_q\mathfrak{gl}(n+1)$ induces a surjective Hopf $*$-algebra morphism $\mathbb{C}[\mathrm{U}_q(n+1)] \twoheadrightarrow \mathbb{C}[\mathrm{U}_q(n)]$.
Hence we can take the inductive limit $\mathrm{U}_q(\infty) = \varinjlim \mathrm{U}_q(n)$ in the sense of Section \ref{S5.2}. Namely, we have
\[
 \mathcal{B}(\mathrm{U}_q(\infty)) \hookleftarrow \mathcal{B}_\infty(\mathrm{U}_q(\infty)) \subset \mathcal{U}(\mathrm{U}_q(\infty)) \supset U_q\mathfrak{gl}(\infty) := \varinjlim U_q\mathfrak{gl}(n),
\]
where the first algebra is a $C^*$-algebraic inductive limit, while the last three algebras are algebraic inductive limits. We also have $\mathcal{B}(\mathrm{U}_q(\infty)\times\mathrm{U}_q(\infty)) := \varinjlim W^*(\mathrm{U}_q(n)\times\mathrm{U}_q(n))$ and
\begin{gather*}
\hat{\Delta} = \varinjlim \hat{\Delta}_n \colon\ \mathcal{B}(\mathrm{U}_q(\infty)) \to \mathcal{B}(\mathrm{U}_q(\infty)\times\mathrm{U}_q(\infty)), \\
 \hat{R} = \varinjlim \hat{R}_n, \qquad \vartheta^t = \varinjlim \vartheta_n^t, \qquad \hat{\varepsilon} = \varinjlim \hat{\varepsilon}_n
\end{gather*}
as in Section \ref{S5.2}.

\subsubsection[Diagonal subgroup of U\_q(infty)]
{Diagonal subgroup of $\boldsymbol{\mathrm{U}_q(\infty)}$} 

Let $U_q\mathfrak{h}(n)$ be the unital $*$-subalgebra generated by the $K_i^{\pm1}$, $1 \leq i \leq n$. Clearly, it is a~com\-mutative, co-commutative Hopf $*$-subalgebra of $U_q\mathfrak{gl}(n)$. Let $\mathbb{C}[T_n]$ be the unital $*$-algebra consisting of all the restrictions of elements of $\mathbb{C}[\mathrm{U}_q(n)]$ ($\subset (U_q\mathfrak{gl}(n))^*$) to $U_q\mathfrak{h}(n)$ so that we have a surjective Hopf $*$-algebra homomorphism $|_{T_n} \colon \mathbb{C}[\mathrm{U}_q(n)]\twoheadrightarrow\mathbb{C}[T_n]$ that extends to the $C^*$-level $|_{T_n} \colon C(\mathrm{U}_q(n)) \twoheadrightarrow C(T_n) = C(\mathbb{T}^n)$ sending the canonical generators $t_{ij}$ and $\det_q^{-1}$ to $\delta_{i,j}t_i$ and $(t_1\cdots t_n)^{-1}$, respectively, \big(or determined by the paring $\big\langle K_1^{m_1}\cdots K_n^{m_n}, t_1^{m'_1}\cdots t_n^{m'_n}\big\rangle = q^{\sum_i m_i m'_i}$\big), where $C(\mathbb{T}^n)$ denotes the algebra of continuous functions on the $n$-torus $\mathbb{T}^n$ and $t_i \in C(\mathbb{T}^n)$ does the $i$th coordinate function on $\mathbb{T}^n$.

It is well known that the following diagram commutes:
\[
\xymatrix{
C(\mathrm{U}_q(n+1)) \ar@{->}[d]_{r_n^{n+1}} \ar@{->}[r]^{\quad |_{T_{n+1}}} & C\big(\mathbb{T}^{n+1}\big) \ar@{->}[d]^{f(t_1,\dots,t_n,t_{n+1}) \mapsto f(t_1,\dots,t_n,1)} \\
C(\mathrm{U}_q(n)) \ar@{->}[r]_{\quad |_{T_n}} \ar@{}[ur]|{\circlearrowright} & C(\mathbb{T}^n).
}
\]
Thus there is a unital $*$-homomorphism $|_{T_{\infty}}$ from the $\sigma$-$C^*$-algebra $C(\mathrm{U}_q(\infty)) = \varprojlim C(\mathrm{U}_q(n))$ to the $\sigma$-$C^*$-algebra $C(\mathbb{T}^\infty) = \varprojlim C(\mathbb{T}^n)$ by $f = (f_n) \mapsto f|_{T_\infty} := (f_n|_{T_n})$. We call this unital $*$-homomorphism $|_{T_\infty} \colon C(\mathrm{U}_q(\infty)) \to C(\mathbb{T}^\infty)$ the \emph{diagonal subgroup} of $\mathrm{U}_q(\infty)$.

\subsubsection[Gorin's q-Schur generating functions and quantized character functions]
{Gorin's $\boldsymbol q$-Schur generating functions and quantized character functions}

Theorem \ref{T5.13} shows that all the unitary equivalence classes of spherical unitary representations of $\mathrm{U}_q(\infty)$ and all the locally normal $(\vartheta^t,-1)$-KMS states $K_{-1}^\mathrm{ln}(\vartheta^t)$ are in one-to-one correspondence. Thus, we can label such a unitary equivalence class with a unique element of $K_{-1}^\mathrm{ln}(\vartheta^t)$. This certainly justifies Sato's formulation of asymptotic representation theory of $\mathrm{U}_q(\infty)$ as mentioned before.

Consider any quantized character function $\chi(u) \in C(\mathrm{U}_q(\infty))$ with $\chi \in K_{-1}^\mathrm{ln}(\vartheta^t)$. Then $\chi(u)|_{T_\infty} = (\chi(u_n)|_{T_n}) \in C(\mathbb{T}^\infty)$ becomes
\[
\chi(u_n)|_{T_n} = \sum_{\lambda \in \mathbb{S}_n} \chi(z_\lambda)\,\frac{1}{\mathrm{Tr}(\pi_{U_\lambda}(\rho_n))} (\mathrm{Tr}\otimes\mathrm{id})\big((\pi_{U_\lambda}(\rho_n)\otimes1)U_\lambda\big)\bigg|_{T_n}
\]
as in Proposition \ref{P5.17}, where $U_\lambda$ is an irreducible unitary representation associated with $\lambda \in \mathbb{S}_n$.

We may think that $U_\lambda$ is obtained from a (unique up to unitary equivalence) irreducible, type $1$ $*$-representation $\pi_\lambda \colon U_q\mathfrak{gl}(n) \curvearrowright \mathcal{H}_\lambda$ of label $\lambda$ via the tensor product duality $L(U_q\mathfrak{gl}(n),B(\mathcal{H}_\lambda)) \allowbreak\cong B(\mathcal{H}_\lambda)\otimes(U_q\mathfrak{gl}(n))^*$ (see the proof of Lemma \ref{L5.14} for the notation), that is, $(\mathrm{id}\otimes X)(U_\lambda)\allowbreak = \pi_\lambda(X)$ holds for any $X \in U_q\mathfrak{gl}(n)$, where $X$ is regarded as an element of the second dual of~$U_q\mathfrak{gl}(n)$ naturally. By \eqref{Eq5.18} we have
\begin{equation}\label{Eq5.19}
\pi_{U_\lambda}(\rho_n) = \pi_\lambda\big(K_1^{-n+1} K_2^{-n+3} \cdots K_n^{n-1}\big),
\end{equation}
and similarly to the calculations in \cite[Lemma 5.1, equation~(5.12)]{OnnStokman:IMRP06} or \cite[equations~(3.20)--(3.23)]{NoumiYamadaMimachi:JpnJMath93}, we see that
\begin{align*}
\frac{1}{\mathrm{Tr}(\pi_{U_\lambda}(\rho_n))} (\mathrm{Tr}\otimes\mathrm{id})\big((\pi_{U_\lambda}(\rho_n)\otimes1)U_\lambda\big)\bigg|_{T_n}
&=
\frac{s_\lambda\big(q^{-n+1}t_1,q^{-n+3}t_2,\dots,q^{n-1}t_n\big)}{s_\lambda\big(q^{-n+1},q^{-n+3},\dots,q^{n-1}\big)} \\
&=
\frac{s_\lambda\big(t_1,q^2 t_2,\dots,q^{2(n-1)}t_n\big)}{s_\lambda\big(1,q^2,\dots,q^{2(n-1)}\big)}
\end{align*}
with rational Schur functions $s_\lambda$.
Consequently, we have the following statement:

\begin{Proposition}
The restriction $\chi(u)|_{T_\infty} = (\chi(u_n)|_{T_n})$ of a quantized character function~$\chi(u)$ with $\chi \in K_{-1}^\mathrm{ln}(\vartheta^t)$ to the diagonal subgroup $T_\infty$ is given by Gorin's $q^{-2}$-Schur generating functions~$\mathcal{S}_{q^{-2}}$ {\rm(}see {\rm \cite[equation (12)]{Gorin:AdvMath12}}{\rm):}
\[
\chi(u_n)|_{T_n} = \mathcal{S}_{q^{-2}}\big(t_1,q^2 t_2,\dots,q^{2(n-1)}t_n;\nu[\chi]_n\big),
\]
where $\nu[\chi]_n(\lambda) := \chi(z_\lambda)$, $\lambda \in \mathbb{S}_n$ $($see {\rm \cite[Proposition~7.10]{Ueda:Preprint20}} for this notation$)$. On the other hand, Gorin's $q^{2}$-Schur generating functions~$\mathcal{S}_{q^{2}}$ also appear as
\[
\chi^*(u_n)|_{T_n} \big(= \chi(u_n)^*|_{T_n}\big) = \mathcal{S}_{q^{2}}\big(t_1,q^{-2} t_2,\dots,q^{-2(n-1)}t_n;\nu[\chi^*]_n\big).
\]
\end{Proposition}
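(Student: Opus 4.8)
The plan is to reduce the statement to Proposition~\ref{P5.17} and the principal-specialisation evaluation displayed just above it, after which the claim becomes essentially a matter of reading off Gorin's definition. First I would apply the restriction map $|_{T_n}\colon C(\mathrm{U}_q(n))\twoheadrightarrow C(\mathbb{T}^n)$ to the norm-convergent series for $\chi(u_n)$ in Proposition~\ref{P5.17}; being a contractive $*$-homomorphism it passes through the sum, and combining this with \eqref{Eq5.19} and the displayed identity for $\frac{1}{\mathrm{Tr}(\pi_{U_\lambda}(\rho_n))}(\mathrm{Tr}\otimes\mathrm{id})\big((\pi_{U_\lambda}(\rho_n)\otimes1)U_\lambda\big)\big|_{T_n}$ one obtains
\[
\chi(u_n)|_{T_n}=\sum_{\lambda\in\mathbb{S}_n}\chi(z_\lambda)\,
\frac{s_\lambda\big(t_1,q^2 t_2,\dots,q^{2(n-1)}t_n\big)}{s_\lambda\big(1,q^2,\dots,q^{2(n-1)}\big)}.
\]
In passing from the symmetric shift $\big(q^{-n+1}t_1,\dots,q^{n-1}t_n\big)$ to the normalised one I would use the homogeneity of the rational Schur polynomial $s_\lambda$, of degree $|\lambda|=\sum_i\lambda_i\in\mathbb{Z}$, the common monomial $q^{(-n+1)|\lambda|}$ cancelling between numerator and denominator.

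Next I would match this series with Gorin's $Q$-Schur generating function, which by \cite[equation~(12)]{Gorin:AdvMath12} is $\mathcal{S}_Q(x_1,\dots,x_n;\nu)=\sum_\lambda\nu(\lambda)\,s_\lambda(x_1,\dots,x_n)\big/s_\lambda\big(1,Q,\dots,Q^{n-1}\big)$. Taking $Q=q^{-2}$, $x_i=q^{2(i-1)}t_i$, and $\nu=\nu[\chi]_n$ — the last being exactly $\nu[\chi]_n(\lambda)=\chi(z_\lambda)$ under the identification $\mathfrak{Z}_n=\mathbb{S}_n$ fixed earlier in Section~\ref{S5.4}, which is the notation of \cite[Proposition~7.10]{Ueda:Preprint20} — the numerators coincide on the nose, and only the denominators $s_\lambda\big(1,q^{-2},\dots,q^{-2(n-1)}\big)$ and $s_\lambda\big(1,q^2,\dots,q^{2(n-1)}\big)$ need to be reconciled. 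That is again pure homogeneity together with the $\mathfrak{S}_n$-symmetry of $s_\lambda$ (reverse the order of the variables and pull out a monomial). This yields the first formula.

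For the second formula I would pass to $\chi^*=\chi\circ\hat R\in K_{+1}^{\mathrm{ln}}(\vartheta^t)$ and use $\chi(u_n)^*|_{T_n}=\chi^*(u_n)|_{T_n}$, which is contained in the preceding Proposition (concretely, $\chi(u_z)^*=\chi^*(u_{\bar z})$ together with the involution $z\mapsto\bar z$ on $\mathfrak{Z}_n$). I would then rerun the two steps above for $\chi^*$ using the $\beta=+1$ analogue of Proposition~\ref{P5.17}: the unique normal $(\vartheta_z^t,+1)$-KMS state on $zW^*(\mathrm{U}_q(n))\cong B(\mathcal{H}_{U_z})$ has density $\pi_{U_z}(\rho_n^{-1})$ in place of $\pi_{U_z}(\rho_n)$, and $\rho_n^{-1}=K_1^{\,n-1}K_2^{\,n-3}\cdots K_n^{-n+1}$ by \eqref{Eq5.18}. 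This turns the torus shift into $\big(t_1,q^{-2}t_2,\dots,q^{-2(n-1)}t_n\big)$ and Gorin's parameter into $Q=q^{2}$, producing exactly $\chi^*(u_n)|_{T_n}=\mathcal{S}_{q^{2}}\big(t_1,q^{-2}t_2,\dots,q^{-2(n-1)}t_n;\nu[\chi^*]_n\big)$.

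The analytic content is already in hand — it is precisely \eqref{Eq5.19} and the display above the statement, both obtained via the $q$-analysis computations cited there — so I do not expect an obstacle of substance. The one place that will need careful writing is the bookkeeping of conventions: which power of $q$ is Gorin's deformation parameter, and in which order the principal specialisation $s_\lambda(1,Q,\dots,Q^{n-1})$ is to be read. I expect all of this to be controlled purely by the homogeneity and $\mathfrak{S}_n$-symmetry of the rational Schur functions, with no further input required.
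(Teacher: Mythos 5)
Your overall route is the paper's own: the paper proves this proposition precisely by restricting the norm-convergent series of Proposition~\ref{P5.17} through the $*$-homomorphism $|_{T_n}$, invoking \eqref{Eq5.19} and the displayed evaluation of $\frac{1}{\mathrm{Tr}(\pi_{U_\lambda}(\rho_n))}(\mathrm{Tr}\otimes\mathrm{id})\big((\pi_{U_\lambda}(\rho_n)\otimes1)U_\lambda\big)\big|_{T_n}$, and then reading the resulting series against Gorin's definition; your treatment of the second formula via $\chi(u_n)^*|_{T_n}=\chi^*(u_n)|_{T_n}$ and the $(\vartheta_z^t,+1)$-KMS density $\pi_{U_z}\big(\rho_n^{-1}\big)$ with $\rho_n^{-1}=K_1^{n-1}K_2^{n-3}\cdots K_n^{-n+1}$ is also exactly what the paper does implicitly. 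Your first homogeneity step (cancelling $q^{(n-1)|\lambda|}$ between numerator and denominator of the same $\lambda$-term) is correct.

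The gap is in the final matching step, the very ``bookkeeping of conventions'' you flag and then dismiss. If Gorin's equation (12) is, as you quote it, normalized by $s_\lambda\big(1,Q,\dots,Q^{n-1}\big)$, then the discrepancy between $s_\lambda\big(1,q^{-2},\dots,q^{-2(n-1)}\big)$ and $s_\lambda\big(1,q^{2},\dots,q^{2(n-1)}\big)$ cannot be removed by homogeneity and $\mathfrak{S}_n$-symmetry: those give
\[
s_\lambda\big(1,q^{2},\dots,q^{2(n-1)}\big)=q^{2(n-1)|\lambda|}\,s_\lambda\big(1,q^{-2},\dots,q^{-2(n-1)}\big),
\]
a factor depending on $\lambda$ through $|\lambda|$, which cannot be pulled out of the sum over $\lambda\in\mathbb{S}_n$. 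Absorbing it into the numerator instead replaces the argument tuple $\big(t_1,q^2t_2,\dots,q^{2(n-1)}t_n\big)$ by its rescaling by $q^{-2(n-1)}$, and since the Schur generating function is symmetric but \emph{not} homogeneous (its terms have degrees $|\lambda|$ ranging over all of $\mathbb{Z}$), this is a genuinely different evaluation: under your quoted convention your computation proves $\chi(u_n)|_{T_n}=\mathcal{S}_{q^{-2}}\big(q^{-2(n-1)}t_1,q^{-2(n-2)}t_2,\dots,t_n;\nu[\chi]_n\big)$, not the stated identity. The statement of the proposition matches the computed series
\[
\chi(u_n)|_{T_n}=\sum_{\lambda\in\mathbb{S}_n}\chi(z_\lambda)\,\frac{s_\lambda\big(t_1,q^2t_2,\dots,q^{2(n-1)}t_n\big)}{s_\lambda\big(1,q^2,\dots,q^{2(n-1)}\big)}
\]
verbatim, i.e., it presupposes that Gorin's normalization for parameter $Q=q^{-2}$ is by $s_\lambda\big(1,Q^{-1},\dots,Q^{-(n-1)}\big)=s_\lambda\big(1,q^2,\dots,q^{2(n-1)}\big)$; the paper performs no reconciliation at all. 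So you must verify Gorin's actual equation (12) and then delete your reconciliation step (and likewise for the $\mathcal{S}_{q^2}$ formula, where the same issue would recur); as written, that step either proves a different identity or rests on a misquoted definition, and it cannot be repaired by symmetric-function identities alone.
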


We emphasize that some of the results in \cite[Section 4]{Gorin:AdvMath12} based on symmetric functions immediately follow from the general theory we have developed so far. The result shows that Gorin's setup fits the co-opposite version of spherical unitary representations of our $\mathrm{U}_q(\infty)$ or equivalently the locally normal $(\vartheta^t,+1)$-KMS states $K_{+1}^\mathrm{ln}(\vartheta^t)$; see Remarks \ref{R5.6}(3) and the short sentense just after Corollary \ref{C5.11}. However, this difference only causes the rather minor transition from $q$ to $q^{-1}$.

We have
\begin{align*}
\omega \in S^\mathrm{ln}(\mathcal{S}(\vartheta^t,-1))
&\longleftrightarrow
\chi_\omega := E_\bullet^*(\omega) \in K_{-1}^\mathrm{ln}(\vartheta^t) \\
&\longleftrightarrow \chi_\omega(u) \in C(\mathrm{U}_q(\infty)) \\
&\longleftrightarrow \chi_\omega(u)|_{T_\infty} \in C(\mathbb{T}^\infty)
\end{align*}
(see Theorem \ref{T3.11} for the first line). The first two lines are trivially in one-to-one correspondence, and that so is the last line too needs the orthogonality of matrix elements of $U_\lambda$'s with respect to the Haar state of each $\mathrm{U}_q(n)$. We are interested in finding any representation-theoretic interpretation of $S^\mathrm{ln}(\mathcal{S}(\vartheta^t,-1))$. A very first step to this question will be given below.

\subsubsection[The representation operator system S(vartheta\textasciicircum{}t,-1) as left-module over the algebra Sigma(vartheta\textasciicircum{}t,-1)]
{The representation operator system $\boldsymbol{\mathcal{S}(\vartheta^t,-1)}$ as left-module \\over the algebra $\boldsymbol{\Sigma(\vartheta^t,-1)}$} 

We first introduce a natural multiplicative structure on the inductive sequence $W^*(\mathrm{U}_q(n))$.

Let $m, n$ be arbitrarily natural numbers, and the embedding $U_q\mathfrak{gl}(m)\otimes U_q\mathfrak{gl}(n) \hookrightarrow U_q\mathfrak{gl}(m+n)$ is given in the standard way, e.g., in~\cite[Section 5.2]{OnnStokman:IMRP06}. Namely, the embedding is given by
\begin{itemize}\itemsep=1pt
\item $K_{i}\otimes 1 \mapsto K_{i}$ ($1 \leq i \leq m$) and $1\otimes K_{i} \mapsto K_{m+i}$ ($1 \leq i \leq n$),
\item $x_j \otimes 1 \mapsto x_j$ ($1 \leq j \leq m-1$) and $1\otimes x_j \mapsto x_{j+m}$ ($1\leq j \leq n-1$),
\item $y_j \otimes 1 \mapsto y_j$ ($1 \leq j \leq m-1$) and $1\otimes y_j \mapsto y_{j+m}$ ($1\leq j \leq n-1$).
\end{itemize}
This defines a surjective Hopf $*$-algebra homomorphism $\mathbb{C}[\mathrm{U}_q(m+n)] \twoheadrightarrow \mathbb{C}[\mathrm{U}_q(m)]\otimes\mathbb{C}[\mathrm{U}_q(n)] =: \mathbb{C}[\mathrm{U}_q(m)\times\mathrm{U}_q(n)]$. (See, e.g., \cite[Section 5]{DijkhuizenStokman:PRIMS99}, \cite[Section 5.2]{OnnStokman:IMRP06}.) Then we obtain $\mathcal{U}(\mathrm{U}_q(m)\times\mathrm{U}_q(n)) \hookrightarrow \mathcal{U}(\mathrm{U}_q(m+n))$ as before. Taking the bounded part of these $*$-algebras we obtain the desired left multiplicative structure $\iota_{m,n} \colon W^*(\mathrm{U}_q(m))\,\bar{\otimes}\,W^*(\mathrm{U}_q(n)) = W^*(\mathrm{U}_q(m)\times\mathrm{U}_q(n)) \to W^*(\mathrm{U}_q(m+n))$. Note that requirement $(m3)$ of left multiplicative structure is checked in the same way as in the construction of $\mathcal{B}(G)$ with $G = \varinjlim G_n$; see Section \ref{S5.2.1}. Also requirement~$(L)$ is the consequence of embedding $U_q\mathfrak{gl}(n) \hookrightarrow U_q\mathfrak{gl}(n+1)$ taking the ``upper diagonal corner''. (Remark that item $(R)$ would be used instead for the ``lower diagonal corner'' embedding.)

As before (see the discussion around \eqref{Eq5.19}), we have
\begin{align}
\dim_q(\lambda)& := \mathrm{Tr}(\pi_{U_\lambda}(\rho_n)) = \mathrm{Tr}\big(\pi_\lambda\big(K_1^{-n+1} K_2^{-n+3} \cdots K_n^{n-1}\big)\big) \nonumber
\\
&= s_\lambda\big(q^{-n+1},q^{-n+3},\dots,q^{n-1}\big),
\label{Eq5.20}
\end{align}
which is called the quantum dimension of $\lambda$. We set $\hat{z}_\lambda := \dim_q(\lambda)^{-1}z_\lambda$ as in Section \ref{S3.5}, and will compute $E_{m+n}(\iota_{m,n}(\hat{z}_\mu\otimes\hat{z}_\nu))$ with $(\mu,\nu) \in \mathbb{S}_m\times\mathbb{S}_n$. Since the branching rule for this quantum case is known to be the same as for the classical case (see, e.g., \cite[Proposition~5.4]{DijkhuizenStokman:PRIMS99} with the help of ``differentiation''), we have
\begin{gather*}
(\pi_\lambda \colon U_q\mathfrak{gl}(m)\otimes U_q\mathfrak{gl}(n) \curvearrowright \mathcal{H}_\lambda)
\\ \qquad{}
\cong \bigoplus_{(\mu,\nu) \in \mathbb{S}_m\times\mathbb{S}_n} (\pi_\mu\otimes\pi_\nu \colon U_q\mathfrak{gl}(m)\otimes U_q\mathfrak{gl}(n) \curvearrowright \mathcal{H}_\mu\otimes\mathcal{H}_\nu)^{\oplus c(\lambda \mid \mu,\nu)}
\end{gather*}
(up to unitary equivalence), where $c(\lambda\mid\mu,\nu)$ is the same as in Section \ref{S3.5}. By \eqref{Eq5.19}, \eqref{Eq5.20} and the embedding of the $K_i$'s from $U_q\mathfrak{gl}(m)\otimes U_q\mathfrak{gl}(n)$ into $U_q\mathfrak{gl}(m+n)$, we have
\begin{align*}
&z_\lambda E_{m+n}(\iota_{m,n}(z_\mu\otimes z_\nu)) \\
&\ \quad=\!
\frac{1}{\dim_q(\lambda)}\,c(\lambda\mid\mu,\nu)\,
\mathrm{Tr}\big(\pi_\mu\big(K_1^{-(m+n)+1}\cdots K_m^{m-n-1}\big)\otimes\pi_\nu\big(K_{m+1}^{m-n+1}\cdots K_{m+n}^{m+n-1}\big)\big)\,z_\lambda \\
&\ \quad=\!
c(\lambda\! \mid \!\mu,\nu)\,\frac{s_\mu(q^{-(m+n)+1},q^{-(m+n)+3},\dots,q^{m-n-1})\,
s_\nu(q^{m-n+1},q^{m-n+3},\dots,q^{m+n-1})}{\dim_q(\lambda)}\,z_\lambda \\
&\ \quad=\!
c(\lambda \mid \mu,\nu)\,q^{[\mu,\nu]}\frac{\dim_q(\mu)\,\dim_q(\nu)}{\dim_q(\lambda)}\,z_\lambda,
\end{align*}
where we define
\begin{equation*}
[\mu,\nu] := \det\begin{bmatrix} \ell(\mu) & \ell(\nu) \\ |\mu| & |\nu| \end{bmatrix}
= \ell(\mu)|\lambda| - \ell(\lambda)|\mu|
\end{equation*}
with $\ell(\mu) := m$, $\ell(\nu) := n$ and $|\lambda| := \sum_{i=1}^n\lambda_i$, $|\mu| := \sum_{i=1}^n \mu_i$ as usual.
Thus, we obtain
\begin{equation*}
E_{m+n}(\iota_{m,n}(\hat{z}_\mu\otimes\hat{z}_\nu)) = \sum_{\lambda \in \mathbb{S}_{m+n}} c_q(\lambda \mid \mu,\nu)\,\hat{z}_\lambda \qquad \text{with} \quad c_q(\lambda\mid\mu,\nu) := c(\lambda \mid \mu,\nu)\,q^{[\mu,\nu]}.
\end{equation*}
We conclude the following:

\begin{Proposition}\label{P5.21} The multiplication on $\Sigma(\vartheta^t,-1)$ is determined by
\[
\hat{z}_\mu\cdot\hat{z}_\nu = \sum_{\lambda\in\mathbb{S}_{m+n}} c_q(\lambda\mid\mu,\nu)\,\hat{z}_\lambda = q^{2[\mu,\nu]}\,\hat{z}_\nu\cdot\hat{z}_\mu, \qquad \mu \in \mathbb{S}_m,\quad \nu \in \mathbb{S}_n,
\]
and accordingly, the left $\Sigma(\vartheta^t,-1)$-module multiplications on $\ddot{\mathcal{S}}(\vartheta^t,-1)$ and $\mathcal{S}(\vartheta^t,-1)$ enjoy that
\begin{gather*}
\hat{z}_\mu\cdot\ddot{\Theta}_{\infty,n}(\hat{z}_\nu) = q^{2[\mu,\nu]}\,\hat{z}_\nu\cdot\ddot{\Theta}_{\infty,m}(\hat{z}_\mu), \\
\hat{z}_\mu\cdot\Theta_{\infty,n}(\hat{z}_\nu) = q^{2[\mu,\nu]}\,\hat{z}_\nu\cdot\Theta_{\infty,m}(\hat{z}_\mu)
\end{gather*}
for any pair $\mu \in \mathbb{S}_m$, $\nu \in \mathbb{S}_n$, respectively.
\end{Proposition}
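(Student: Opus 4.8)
The plan is to work purely at the level of $\Sigma(\vartheta^t,-1)$ first, then transport everything to $\ddot{\mathcal S}(\vartheta^t,-1)$ and $\mathcal S(\vartheta^t,-1)$ by applying the maps $x\mapsto x\cdot\ddot e$ and $x\mapsto x\cdot e$ of Theorem~\ref{T3.14}(2). The formula $\hat z_\mu\cdot\hat z_\nu=\sum_\lambda c_q(\lambda\mid\mu,\nu)\,\hat z_\lambda$ is exactly the content of the computation of $E_{m+n}(\iota_{m,n}(\hat z_\mu\otimes\hat z_\nu))$ carried out just above the proposition, combined with the definition of the multiplication on $\Sigma(\vartheta^t,-1)$ in Theorem~\ref{T3.14}(1); so the only genuinely new point is the relation $c_q(\lambda\mid\mu,\nu)=q^{2[\mu,\nu]}\,c_q(\lambda\mid\nu,\mu)$, equivalently $\hat z_\mu\cdot\hat z_\nu=q^{2[\mu,\nu]}\,\hat z_\nu\cdot\hat z_\mu$.

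First I would establish the symmetry of the classical Littlewood--Richardson-type coefficient: $c(\lambda\mid\mu,\nu)=c(\lambda\mid\nu,\mu)$ for $\mu\in\mathbb S_m$, $\nu\in\mathbb S_n$, $\lambda\in\mathbb S_{m+n}$. This is immediate from \eqref{Eq3.10}: expanding $s_\lambda(t_1,\dots,t_{m+n})$ in terms of $s_\mu$ in the first block of variables and $s_\nu$ in the second block, versus the opposite grouping, gives the same coefficient because the product $s_\mu s_\nu$ of symmetric functions is commutative and $s_\lambda$ is symmetric in all $m+n$ variables. Then, since $c_q(\lambda\mid\mu,\nu)=c(\lambda\mid\mu,\nu)\,q^{[\mu,\nu]}$ and $c_q(\lambda\mid\nu,\mu)=c(\lambda\mid\nu,\mu)\,q^{[\nu,\mu]}$ with $[\nu,\mu]=-[\mu,\nu]$ by the determinantal definition of $[\,\cdot\,,\cdot\,]$, we get $c_q(\lambda\mid\mu,\nu)=q^{[\mu,\nu]-[\nu,\mu]}\,c_q(\lambda\mid\nu,\mu)=q^{2[\mu,\nu]}\,c_q(\lambda\mid\nu,\mu)$, which is precisely the desired identity in $\Sigma(\vartheta^t,-1)$.

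Next I would transport this to the module level. By Theorem~\ref{T3.14}(2) the left $\Sigma$-action satisfies $\hat z_\mu\cdot\ddot\Theta_{\infty,n}(g)=\ddot\Theta_{\infty,m+n}(E_{m+n}(\iota_{m,n}(\hat z_\mu\otimes g)))$; taking $g=\hat z_\nu$ and using the just-proved formula for $E_{m+n}(\iota_{m,n}(\hat z_\mu\otimes\hat z_\nu))$ gives $\hat z_\mu\cdot\ddot\Theta_{\infty,n}(\hat z_\nu)=\sum_\lambda c_q(\lambda\mid\mu,\nu)\,\ddot\Theta_{\infty,m+n}(\hat z_\lambda)$, and symmetrically $\hat z_\nu\cdot\ddot\Theta_{\infty,m}(\hat z_\mu)=\sum_\lambda c_q(\lambda\mid\nu,\mu)\,\ddot\Theta_{\infty,m+n}(\hat z_\lambda)$. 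Comparing the two sums term by term via $c_q(\lambda\mid\mu,\nu)=q^{2[\mu,\nu]}c_q(\lambda\mid\nu,\mu)$ yields $\hat z_\mu\cdot\ddot\Theta_{\infty,n}(\hat z_\nu)=q^{2[\mu,\nu]}\,\hat z_\nu\cdot\ddot\Theta_{\infty,m}(\hat z_\mu)$, and applying the UCP quotient map $q_{\mathcal S}$ (which intertwines the module actions, $q_{\mathcal S}(f\cdot\ddot s)=f\cdot q_{\mathcal S}(\ddot s)$, as used in the proof of Proposition~\ref{P3.16}) gives the corresponding identity for $\Theta_{\infty,n}$ in $\mathcal S(\vartheta^t,-1)$.

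I do not expect a serious obstacle here; the proposition is essentially bookkeeping around the explicit computation of $E_{m+n}(\iota_{m,n}(\hat z_\mu\otimes\hat z_\nu))$ already displayed. The one point requiring a little care is the sign convention in $[\mu,\nu]=\det\begin{bmatrix}\ell(\mu)&\ell(\nu)\\ |\mu|&|\nu|\end{bmatrix}$, so that $[\nu,\mu]=-[\mu,\nu]$ holds with the fixed ordering, and the (easy) verification that the natural embedding $U_q\mathfrak{gl}(m)\otimes U_q\mathfrak{gl}(n)\hookrightarrow U_q\mathfrak{gl}(m+n)$ of Section~\ref{S5.4.5}, being the analogue of \eqref{Eq3.11} rather than its conjugate, is what makes property $(c)$ \emph{fail} — so that $\Sigma(\vartheta^t,-1)$ is genuinely noncommutative and the $q^{2[\mu,\nu]}$ twist is not trivial. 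That is really a remark rather than a step in the proof, so the argument itself stays short.
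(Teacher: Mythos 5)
Your proposal is correct and follows essentially the same route as the paper, which states Proposition~\ref{P5.21} as a direct consequence of the displayed computation of $E_{m+n}(\iota_{m,n}(\hat z_\mu\otimes\hat z_\nu))$ together with the (implicit) symmetry $c(\lambda\mid\mu,\nu)=c(\lambda\mid\nu,\mu)$, the antisymmetry $[\nu,\mu]=-[\mu,\nu]$, and the module formula of Theorem~\ref{T3.14}(2) transported to $\mathcal S(\vartheta^t,-1)$ via $q_{\mathcal S}$. Your write-up merely makes these implicit steps explicit, so no gap remains.
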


This proposition explains that $\Sigma(\vartheta^t,-1)$ never becomes commutative. This is a completely different phenomenon from $\mathrm{U}(\infty)$; compare this with Section~\ref{S3.5}.

Since $\dim_q((k)) = 1$ we have $1_1 = \sum_{k \in \mathbb{Z}} z_{(k)} = \sum_{k \in \mathbb{Z}} \hat{z}_{(k)} \in \mathcal{Z}(W^*(\mathrm{U}_q(1))) = \mathcal{Z}(W^*(\mathbb{T})) \subset \Sigma(\vartheta^t,-1)$. We observe that for each $\mu \in \mathbb{S}_n$,
\begin{align*}
E_{n+1}(z_\mu)
&=
z_\mu\cdot1_1 = \dim_q(\mu)\,\hat{z}_\mu\cdot1_1 \\
&=
\dim_q(\mu)\,\sum_{k\in\mathbb{Z}}\hat{z}_\mu\cdot\hat{z}_{(k)} \\
&=
\dim_q(\mu)\,\sum_{\lambda \in \mathbb{S}_{n+1}} \sum_{k \in \mathbb{Z}} c_q(\lambda\mid\mu,(k))\,\hat{z}_\lambda \\
&=
\sum_{\lambda \in \mathbb{S}_{n+1}; \lambda \succ \mu} q^{[\lambda,(|\lambda|-|\mu|)]}\frac{\dim_q(\mu)}{\dim_q(\lambda)}\,z_\lambda \\
&=
\sum_{\lambda \in \mathbb{S}_{n+1}; \lambda \succ \mu} q^{n|\lambda|-(n+1)|\mu|}\frac{\dim_q(\mu)}{\dim_q(\lambda)}\,z_\lambda,
\end{align*}
where $\lambda \succ \mu$ denotes the interlace relation, i.e., $\lambda_i \geq \mu_i \geq \lambda_{i+1}$, $i=1,2,\dots,n$.
Here, we have used Theorem \ref{T3.14} and computed Littlewood--Richardson coefficients $c(\lambda\mid\mu,(k))$. The above computation is performed in $\Sigma(\vartheta^t,-1)$, and by \eqref{Eq3.6} we arrive at the following explicit description of the link $\kappa_{(\vartheta^t,-1)}$ on the Gelfand--Tsetlin graph:
\begin{equation}\label{Eq5.23}
\kappa_{(\vartheta^t,-1)}(\lambda,\mu) =
\begin{cases}
q^{n|\lambda|-(n+1)|\mu|}\dfrac{\dim_q(\mu)}{\dim_q(\lambda)}, & \lambda \succ \mu, \\
0, & \text{otherwise}.
\end{cases}
\end{equation}

The above computation is purely quantum group representation-theoretic, and its keys are only \eqref{Eq5.18} and the branching rule for $U_q\mathfrak{gl}(m)\otimes U_q\mathfrak{gl}(n) \hookrightarrow U_q\mathfrak{gl}(m+n)$.
The actual computation for the classification of irreducible spherical unitary representations of $\mathrm{U}_q(\infty) < \mathrm{U}_q(\infty)\times\mathrm{U}_q(\infty)$ comes down to the analysis on the link \eqref{Eq5.23} due to Gorin \cite{Gorin:AdvMath12}.

We have the natural map $\Gamma \colon \mathcal{S}(\vartheta^t,-1) \to C\big(\mathrm{ex}\big(S^\mathrm{ln}(\mathcal{S}(\vartheta^t,-1))\big)\big) \cong C\big(\mathrm{ex}\big(K_{-1}^\mathrm{ln}(\vartheta^t)\big)\big)$ defined by $\Gamma(s)(\chi) := \chi(f)$ for any $s = \Theta_{\infty,n}(f) \in \mathcal{S}(\vartheta^t,-1)$ with $f \in \mathcal{Z}(W^*(\mathrm{U}_q(n)))$ and $\chi \in \mathrm{ex}\big(K_{-1}^\mathrm{ln}(\vartheta^t)\big)$, where $C\big(\mathrm{ex}\big(S^\mathrm{ln}(\mathcal{S}(\vartheta^t,-1))\big)\big) \cong C\big(\mathrm{ex}\big(K_{-1}^\mathrm{ln}(\vartheta^t)\big)\big)$ is induced by Theorem \ref{T3.11}. There are natural questions on this map along the line of~\cite{Olshanski:AdvMath16}, though the map is not multiplicative in the present setup unlike the case of~$\mathrm{U}(\infty)$. We thank one of the referees for asking this kind of insightful question.

\subsubsection{Smooth quantized characters} \label{S5.4.6}

The short discussion below is just a remark to give a basis for future research.

Let $\langle\,\cdot\,,\,\cdot\,\rangle_n \colon \mathbb{C}[\mathrm{U}_q(n)] \times \mathcal{U}(\mathrm{U}_q(n)) \to \mathbb{C}$ be the dual pairing. Observe (see \eqref{Eq5.16}) that $\chi(u_\lambda)$ falls in $\mathbb{C}[\mathrm{U}_q(n)]$ and
\[
\langle \chi(u_\lambda),X\rangle_n
=
\chi(z_\lambda)\frac{\mathrm{Tr}(\pi_{U_\lambda}(\rho_n)\pi_{U_\lambda}(X))}{\mathrm{Tr}(\pi_{U_\lambda}(\rho_n))}
=
\chi(z_\lambda X)
\]
for all $\chi \in K_{-1}^\mathrm{ln}(\vartheta^t)$ and $X \in \mathcal{U}(\mathrm{U}_q(n))$, where $z_\lambda X$ sits in $z_\lambda\,\mathcal{U}(\mathrm{U}_q(n)) = z_\lambda W^*(\mathrm{U}_q(n)) \subset W^*(\mathrm{U}_q(\infty))$. Thus, we are interested in determining the intermediate space $\mathfrak{S}[\mathrm{U}_q(n)]$ between $\mathbb{C}[\mathrm{U}_q(n)] \subset C(\mathrm{U}_q(n))$, to which all the linear functionals $a \in \mathbb{C}[\mathrm{U}_q(n)] \mapsto \langle a,X\rangle_n \in \mathbb{C}$ with arbitrary $X \in U_q\mathfrak{gl}(n)$ ($\subset \mathcal{U}(\mathrm{U}_q(n))$ via \eqref{Eq5.17}) can ``extend''.

We should call $\chi \in K_{-1}^\mathrm{ln}(\vartheta^t)$ a \emph{smooth quantized character}, if
$\chi(u_n) \in \mathfrak{S}[\mathrm{U}_q(n)]$ holds for every $n \geq 1$. If this is the case, then the $\chi$ naturally ``extends'' to $U_q\mathfrak{gl}(\infty)$ as $X \mapsto \lim_{n\to\infty}\langle \chi(u_n),X\rangle_n$, since
\[
\langle \chi(u_{n+1}), X\rangle_{n+1} = \langle \chi(u_n), X\rangle_n
\]
holds for every $X \in U_q\mathfrak{gl}(n) \subset U_q\mathfrak{gl}(\infty) \subset \mathcal{U}(\mathrm{U}_q(\infty))$ by Proposition \ref{P5.15}. See \cite[Section 3.1]{BorodinBufetov:DukeMathJ14} for a classical counterpart. A condition guaranteeing that a given $\chi \in K_{-1}^\mathrm{ln}(\vartheta^t)$ is smooth is an interesting question. Remark that
\[
\frac{\mathrm{Tr}(\pi_{U_\lambda}(\rho_n)\pi_{U_\lambda}(X))}{\mathrm{Tr}(\pi_{U_\lambda}(\rho_n))} = \frac{\mathrm{Tr}\big(\pi_\lambda\big(K_1^{-n+1} K_2^{-n+3} \cdots K_n^{n-1}X\big)\big)}{\mathrm{Tr}\big(\pi_\lambda\big(K_1^{-n+1} K_2^{-n+3} \cdots K_n^{n-1}\big)\big)},
\]
and hence one can also investigate smooth quantized characters by means of quantized universal enveloping algebra $U_q\mathfrak{gl}(n)$.

\subsection*{Acknowledgements}
The author thanks Ryosuke Sato for many discussions on his works on the asymptotic representation theory for quantum groups. The author also thanks Reiji Tomatsu and Yuki Arano for explaining many things about quantum groups and tensor categories during the last decade. In~fact, this work benefited from what Reiji and Yuki explained to the author on various occasions. Finally, the author thanks Makoto Yamashita for presenting him with his Japanese book~\cite{Yamashita:Book} some years ago. The book was so helpful to the author in getting a perspective to quantum groups prior to this work. Finally, the author thanks the referees for their comments.

This work was supported by Grant-in-Aid for Scientific Research (B) JP18H01122.

\pdfbookmark[1]{References}{ref}
\LastPageEnding

\end{document}